\providecommand{\U}[1]{\protect\rule{.1in}{.1in}}
\newtheorem{theorem}{Theorem}
\newtheorem{acknowledgement}[theorem]{Acknowledgement}
\newtheorem{claim}[theorem]{Claim}
\newtheorem{corollary}[theorem]{Corollary}
\newtheorem{definition}[theorem]{Definition}
\newtheorem{lemma}[theorem]{Lemma}
\newtheorem{problem}[theorem]{Problem}
\newtheorem{proposition}[theorem]{Proposition}
\newenvironment{proof}[1][Proof]{\noindent\textbf{#1.} }{\ \rule{0.5em}{0.5em}}
\begin{document}

\date{}
\title{No covering with nowhere dense \textsf{P}-sets in the Cohen model}
\author{Alan Dow \thanks{\textit{keywords:} Nowhere dense P-sets, ultrafilters,
towers, Cohen forcing, random forcing. \newline\textit{AMS Classification:}
54A35, 03E35, 03E75, 03E05.}
\and Osvaldo Guzm\'{a}n \thanks{The second author was supported by the PAPIIT grant
IA 104124 and the SECIHTI grant CBF2023-2024-903.}}
\maketitle

\begin{abstract}
We prove that if less than $\aleph_{\omega}$-many Cohen reals are added to a
model of \textsf{CH}, then $\omega^{\ast}$ can not be covered by nowhere dense
\textsf{P}-sets (equivalently, there is an ultrafilter on $\omega$ that does
not contain a tower).

\end{abstract}

\section{Introduction}

Recall that a subset $B$ of a topological space\footnote{All spaces under
discussion are Hausdorff.} $X$ is called a \emph{P-set }if the intersection of
countably many neighborhoods of $B$ is still a neighborhood of it. Similarly,
a point $a\in X$ is a \textsf{P}\emph{-point }if the singleton $\left\{
a\right\}  $ is a \textsf{P}-set. While there are infinite spaces in which
every point is a \textsf{P}-point, this is not the case for compact spaces; in
other words, every infinite compact space contains a non \textsf{P}-point (see
\cite{ConcerningRingsofFunctions}). This result lead Kunen, van Mill and Mills
(see \cite{NowhereDenseClosedPsets}) to wonder the following:\qquad
\qquad\ \ \ \ \qquad\qquad\ \ \ 

\begin{center}
\textit{Can an infinite compact space be covered with}

\textit{nowhere dense \emph{P-}sets?}

\qquad\qquad\ \ \ \ \ \qquad\qquad\qquad
\end{center}

A nowhere dense set is a set whose closure has empty interior, such sets are
often considered \textquotedblleft small\textquotedblright\ in the topological
sense. The motivation behind the question is that, although an infinite
compact space must have non \textsf{P}-points, it might still be possible that
every point lies in a \textquotedblleft small\textquotedblright\ \textsf{P}%
-set. A particular case of the main theorem of \cite{NowhereDenseClosedPsets}
is the following:

\begin{theorem}
[Kunen, van Mill, Mills \cite{NowhereDenseClosedPsets}]If $X$ is a compact
space of $\pi$-weight $\omega_{1},$ then it has a point that does not belong
to any \textit{nowhere dense \emph{P-}set. \label{KunenvanMill}}
\end{theorem}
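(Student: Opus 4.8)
The plan is to argue by contradiction and to translate everything into the regular open algebra. Let $\mathbb{B}=RO(X)$; since $X$ is compact, any decreasing chain of nonempty regular open sets has nonempty intersection of closures, so such chains pin down points, and a $\pi$-base of size $\omega_1$ yields a dense set $D=\{d_\xi:\xi<\omega_1\}\subseteq\mathbb{B}\setminus\{0\}$. The key reformulation is that a closed nowhere dense P-set $N$ has a neighborhood filter that is countably directed (because $N$ is a P-set) with infimum $0$ in $\mathbb{B}$ (because $N$ is nowhere dense). Nowhere-density says precisely that, after shrinking, the set $D_N=\{d\in D:\overline{d}\cap N=\emptyset\}$ is dense below every nonzero element of $\mathbb{B}$: any nonempty regular open $U$ is not contained in $\overline{N}$, so $U\setminus\overline{N}$ contains a $\pi$-base element. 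Hence, to force a point $p$ to avoid $N$, it is enough to arrange that the filter determined by $p$ meets $D_N$. The theorem thus reduces to building a single descending chain whose generated filter meets $D_N$ for \emph{every} nowhere dense P-set $N$; I would keep in view the tower reformulation from the abstract, namely that such an $N$ can be refined to a $\subseteq^*$-decreasing $\omega_1$-sequence drawn from $D$, so the objects to be defeated are, up to refinement, \emph{towers} of length $\omega_1$.

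With the target isolated, I would fix an increasing continuous chain $\langle M_\alpha:\alpha<\omega_1\rangle$ of countable elementary submodels of some $H(\theta)$ containing $X$, $\mathbb{B}$, $D$, and (toward the contradiction) a choice function $x\mapsto N_x$ witnessing that the nowhere dense P-sets cover $X$. Arranging $d_\xi\in M_{\xi+1}$ guarantees $D\subseteq M:=\bigcup_\alpha M_\alpha$, so the size-$\omega_1$ model $M$ sees the entire $\pi$-base. I would then recursively construct a $\leq$-decreasing sequence $\langle b_\alpha:\alpha<\omega_1\rangle$ in $D$ with $b_\alpha\in M_{\alpha+1}$, so that $b_{\alpha+1}$ enters the dense set $D_N$ for the $\alpha$-th relevant $N\in M_\alpha$: there are only countably many such objects inside $M_\alpha$, and density lets me step below the current $b_\alpha$ into $D_N$. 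A point $p\in\bigcap_\alpha\overline{b_\alpha}$, nonempty by compactness, then avoids every nowhere dense P-set lying in $M$.

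Two points will carry the whole difficulty. The first, which I expect to be the main obstacle, is the behaviour at limit stages of countable cofinality: there a decreasing $\omega$-subsequence $b_{\lambda_n}$ must be continued to a nonzero $b_\lambda$, and nothing in a general $\mathbb{B}$ prevents $\inf_n b_{\lambda_n}=0$, which would collapse the filter. Surviving these stages is exactly the tower phenomenon, and it is precisely here that the hypothesis $\pi w(X)=\omega_1$ has to be spent: I expect to use the full enumeration of the $\omega_1$-sized $\pi$-base as a bookkeeping/fusion device that keeps all infima nonzero, and I anticipate that this step is what genuinely fails for larger $\pi$-weight, consistently with the abstract's need for the Cohen model in the general case.

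The second point is a reflection argument: I would need to show that meeting $D_N$ for every $N\in M$ already forces $p$ to avoid \emph{all} nowhere dense P-sets, including those not in $M$. Here I would exploit that $D\subseteq M$, so that any tower contained in the filter of $p$ is assembled from elements of $D$ and has a cofinal trace inside $M$; by elementarity the dense set it determines should coincide, on the relevant part, with one already handled during the recursion. Once both steps are in place, compactness delivers the point $p$, and $p\notin\bigcup_x N_x$ contradicts the assumed cover, which proves the theorem.
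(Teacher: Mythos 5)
Your sketch correctly sets up the reduction to the regular open algebra, but the two steps you yourself flag as ``expected difficulties'' are not loose ends --- they are the entire content of the theorem, and nothing in the proposal closes either of them. At countable limit stages the construction genuinely dies: in $X=2^{\omega_{1}}$ (compact, $\pi$-weight $\omega_{1}$) a decreasing sequence of basic clopen sets whose supports strictly grow has nowhere dense intersection, hence no nonzero lower bound in the regular open algebra and no $\pi$-base element below it, and ``bookkeeping/fusion with the enumeration'' is not a mechanism --- which dense sets $D_{N}$ you are forced to enter is dictated by the adversary's P-sets, not by your enumeration. The decisive symptom is that your construction never actually uses the P-set property: the dense sets $D_{N}$ are defined from nowhere density alone, and the statement is false for arbitrary nowhere dense closed sets (in a compact space without isolated points every singleton is closed and nowhere dense, so no point avoids them all). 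So P-ness must be spent precisely at the limit stages and/or in the reflection step, and the sketch does not say how. The reflection step is equally unsupported: the theorem is a ZFC statement and there may be far more than $\omega_{1}$ nowhere dense P-sets, so avoiding those in an $\omega_{1}$-sized $M$ says nothing about the rest; moreover, your ``tower reformulation'' is a $\mathcal{P}\left(  \omega\right)  /\mathrm{fin}$ phenomenon --- even for $\omega^{\ast}$ the paper needs \textsf{CH} to extract towers from P-filters (Proposition \ref{CH pideal contiene torre}) --- and in a general compact $X$ the neighborhood filter of a P-set need not have a base of size $\omega_{1}$, since $\pi$-weight does not bound weight ($\pi w\left(  \beta\omega\right)  =\omega$ while $w\left(  \beta\omega\right)  =\mathfrak{c}$), so a P-set need not refine to a $\subseteq^{\ast}$-decreasing $\omega_{1}$-sequence drawn from $D$.

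For comparison: the paper does not prove Theorem \ref{KunenvanMill} at all --- it is quoted from Kunen, van Mill and Mills --- and what it does prove (Section \ref{SeccionCH}, the \textsf{CH} case for $\omega^{\ast}$) uses an architecture that sidesteps both of your obstacles. There, one reduces to producing an ultrafilter meeting every increasing tower, assigns to \emph{each} tower $\mathcal{A}$ a countable elementary submodel $N\left(  \mathcal{A}\right)  $ containing it, selects the single tower element indexed by the model's height $\delta_{\mathcal{A}}$, and shows that the family of all these selections is centered, by a finite induction on $\delta$-increasing sequences of models using Lemma \ref{Lema contencion Hw1}; extending the centered family to an ultrafilter finishes the proof. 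Since every tower contributes its own representative, no reflection from an $\omega_{1}$-sized model is needed, and since centeredness is a finitary condition verified pairwise, no transfinite chain is ever built, so there are no limit stages to survive. Your single-chain-plus-reflection plan would have to reinvent, at both of its gaps, exactly the machinery that this centered-family construction is designed to avoid.
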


We now turn our attention to $\omega^{\ast}=\beta\omega\setminus\omega,$ the
remainder of the \v{C}ech-Stone compactification of the countable discrete
space $\omega$ (for an excellent introduction to this topic, see
\cite{betaomega}). Using \emph{Stone duality} (see \cite{LibroBooleanAlgebras}%
), we can translate the assertion \textquotedblleft$\omega^{\ast}$\textit{ can
be covered by nowhere dense \textsf{P}-sets}\textquotedblright\ into a more
combinatorial statement. A \textsf{P}-set in $\omega^{\ast}$ corresponds to a
\textsf{P}-filter\footnote{The undefined notions will be reviewed in the
following section.} and a nowhere dense \textsf{P}-set corresponds to a
nowhere dense \textsf{P}-filter. It follows that the following statements are equivalent:

\begin{enumerate}
\item $\omega^{\ast}$ can not be covered by nowhere dense \textsf{P}-sets.

\item There is an (non-principal) ultrafilter on $\omega$ that does not
contain a nowhere dense \textsf{P}-filter.

\item There is an (non-principal) ultrafilter on $\omega$ that intersects
every tall \textsf{P}-ideal.
\end{enumerate}

\qquad\ \ \qquad\ \ 

By Theorem \ref{KunenvanMill}, it follows that the \emph{Continuum
Hypothesis}\ (\textsf{CH) }implies that $\omega^{\ast}$ can not be covered by
nowhere dense \textsf{P}-sets. Surprisingly, it is also consistent that
$\omega^{\ast}$ can be covered by such sets. This remarkable result was first
proved by Balcar, Frankiewicz and Mills in \cite{MoreonNowhereDensePsets}.
Since then, this topic has continued to attract attention and more results
have been obtained. We now list some of them:

\begin{theorem}
[Balcar, Frankiewicz, Mills \cite{MoreonNowhereDensePsets}]It is consistent
that $\omega^{\ast}$ can be covered by nowhere dense \textsf{P}-sets.
\end{theorem}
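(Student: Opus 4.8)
The plan is to pass, via the Stone duality dictionary recorded above, from the topological statement to a purely combinatorial one, and then to produce a model in which the combinatorial version holds. Concretely, it suffices to find a model of set theory in which \emph{every} free ultrafilter on $\omega$ contains a tower. Recall that a tower is a $\subseteq^{\ast}$-decreasing sequence $\langle T_{\alpha}:\alpha<\delta\rangle$ of infinite sets, with $\delta$ regular and uncountable, admitting no infinite pseudo-intersection. The filter $\mathcal{T}$ it generates is a \textsf{P}-filter: given countably many $X_{n}\in\mathcal{T}$, each $X_{n}$ almost contains some $T_{\alpha_{n}}$, and since $\operatorname{cf}(\delta)>\omega$ the set $T_{\sup_{n}\alpha_{n}}$ is a pseudo-intersection of the $X_{n}$ lying in $\mathcal{T}$. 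Moreover its hull $\widehat{\mathcal{T}}=\{u\in\omega^{\ast}:\mathcal{T}\subseteq u\}$ is nowhere dense, since a basic clopen set $[A]$ is contained in $\widehat{\mathcal{T}}$ exactly when $A$ is a pseudo-intersection of the tower, and there is none. Thus if every ultrafilter $u$ contains a tower $\mathcal{T}_{u}$, then $\omega^{\ast}=\bigcup_{u}\widehat{\mathcal{T}_{u}}$ exhibits the desired covering by nowhere dense \textsf{P}-sets.

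To build such a model, I would first isolate a combinatorial hypothesis that forces every ultrafilter to sit above a tower and is itself consistent. The natural machinery is the Balcar--Pelant--Simon base matrix: a $\subseteq^{\ast}$-refining sequence $\langle\mathcal{A}_{\alpha}:\alpha<\mathfrak{h}\rangle$ of maximal almost disjoint families whose union is dense in $\mathcal{P}(\omega)/\mathrm{fin}$, and whose maximal branches are towers. Under \textsf{CH} these branches have length $\omega_{1}$, but Theorem \ref{KunenvanMill} shows \textsf{CH} is fatal, so one must work with $\mathfrak{c}>\omega_{1}$ and arrange, by forcing over a model of \textsf{GCH}, that the matrix has large height and that the family of branch-towers is \emph{cofinal}, meaning that every ultrafilter contains one of them. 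This is where a carefully designed iteration (or product) enters: one adds towers through the matrix while preserving the refinement structure, pushing every branch to cofinal length so that no ultrafilter can slip to the top of the matrix without a tower lying beneath it.

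The hard part will be exactly this last requirement. There are $2^{\mathfrak{c}}$ ultrafilters, vastly more than the length of any iteration of size $\mathfrak{c}$, so one cannot treat them one at a time and diagonalize; the covering must instead follow from a single global structural feature of the model. The delicate issue is controlling the ultrafilters that live at the top of the base matrix, namely those containing no member of some cofinal level, and showing that the generic towers nonetheless capture them. Establishing the requisite preservation --- that the iteration neither destroys the no-pseudo-intersection property of the approximating towers nor creates new tower-free ultrafilters at limit stages --- is where the real work lies, and is presumably what \cite{MoreonNowhereDensePsets} carries out in detail.
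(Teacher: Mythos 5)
There is a genuine gap, and you have in effect flagged it yourself. The first paragraph of your proposal is correct and complete as far as it goes: a tower generates a nowhere dense \textsf{P}-filter (your cofinality argument for the \textsf{P}-property is fine, since any tower automatically has uncountable cofinality, and your characterization of when a basic clopen set $[A]$ lies inside the hull $\widehat{\mathcal{T}}$ correctly identifies nowhere density with the absence of a pseudointersection). This is exactly the Stone-duality dictionary the paper records in its introduction, and it reduces the theorem to the consistency of the statement that every free ultrafilter contains a tower. But that consistency statement \emph{is} the theorem, and your proposal does not prove it. The second and third paragraphs describe a hoped-for base-matrix forcing construction, correctly identify why it is hard (there are $2^{\mathfrak{c}}$ ultrafilters, so one cannot diagonalize against them in an iteration of length $\mathfrak{c}$; the covering must fall out of a single global feature of the model; preservation at limit stages must be arranged), and then explicitly defer the resolution of every one of these difficulties to the cited paper (``is presumably what \cite{MoreonNowhereDensePsets} carries out in detail''). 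Naming the obstructions is not the same as overcoming them: no forcing is actually defined, no preservation lemma is stated, and no argument is given that the generic towers capture the ultrafilters living at the top of the matrix. As written, the proposal establishes only the routine translation and leaves the entire consistency content unproved.

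It is worth noting that the surrounding paper itself quotes two independent ways you could have closed the gap without reconstructing the original Balcar--Frankiewicz--Mills argument: Dow's theorem \cite{PFiltersCohenRandomLaver} that in the Laver model every ultrafilter contains a tower, and the result of Zhu \cite{RemarkonNowheredensePsets} and Brendle--Farkas--Verner \cite{TowersinFilters} that \textsf{NCF} (known to be consistent) implies the covering. Either of these, combined with your first paragraph, yields a complete proof of the consistency statement modulo a citation to a \emph{proved} theorem, rather than modulo a presumption about what the cited paper ``presumably'' does. Since the paper under review states the theorem as an attributed result without proof, the standard you should hold yourself to is a self-contained argument or a reduction to a precisely quoted result; your proposal meets neither standard for the core of the theorem.
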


\begin{theorem}
[Dow \cite{CozeroAccesiblePoints}]It is consistent with \emph{Martin's Axiom
(MA) }and $\mathfrak{c}=\omega_{2}$ that $\omega^{\ast}$ can not be covered by
nowhere dense \textsf{P}-sets.
\end{theorem}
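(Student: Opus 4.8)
The plan is to build the model by a finite support ccc iteration and to manufacture the witnessing ultrafilter along the way. By the equivalences recorded above---together with the tower reformulation stated in the abstract---it suffices to force MA and $\mathfrak{c}=\omega_{2}$ while producing a non-principal ultrafilter $\mathcal{U}$ on $\omega$ that contains no tower (equivalently, that meets every tall \textsf{P}-ideal). Starting from a ground model of \textsf{CH}, I would run an iteration $\langle\mathbb{P}_{\alpha},\dot{\mathbb{Q}}_{\alpha}:\alpha<\omega_{2}\rangle$ with finite supports, where the bookkeeping serves two masters: one stream enumerates, with cofinal repetition, all nice names for ccc posets of size $<\omega_{2}$, so that the usual argument yields MA in the extension; the other stream is devoted to shaping $\mathcal{U}$. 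Since a finite support iteration of ccc posets is ccc and the whole iteration has length $\omega_{2}$ over a model of \textsf{CH}, the final model satisfies $\mathfrak{c}=\omega_{2}$, and MA will hold provided every iterand---including the ultrafilter-building ones---is ccc; I would in fact take these to be $\sigma$-centered.

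The ultrafilter is assembled as an increasing union $\mathcal{U}=\bigcup_{\alpha<\omega_{2}}\mathcal{F}_{\alpha}$, where $\mathcal{F}_{\alpha}\in V_{\alpha}$ is a filter of character at most $\omega_{1}$ (note that each proper initial forcing $\mathbb{P}_{\alpha}$ has size $\le\omega_{1}$, so $\mathfrak{c}=\omega_{1}$ holds in every intermediate model $V_{\alpha}$). At cofinally many stages I decide a prescribed $X\subseteq\omega$, adjoining $X$ or $\omega\setminus X$ to the current filter, which guarantees that $\mathcal{U}$ is an ultrafilter. To keep $\mathcal{U}$ from being covered I must, for each tall \textsf{P}-ideal $\mathcal{I}$ that is realized at some stage, place a member of $\mathcal{I}$ into $\mathcal{U}$: I let $\dot{\mathbb{Q}}_{\alpha}$ be a Mathias-type $\sigma$-centered poset that adjoins an infinite $A\in\mathcal{I}$ which is positive over $\mathcal{F}_{\alpha}$ (that is, $A\cap F$ is infinite for all $F\in\mathcal{F}_{\alpha}$). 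Maintaining the invariant that the ideals still to be treated remain positive over the $\mathcal{F}_{\alpha}$ is itself part of the bookkeeping, and tallness of $\mathcal{I}$ is what makes the required $A$ available.

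The heart of the matter, and the step I expect to be the principal obstacle, is to show that the $\mathcal{U}$ so constructed contains no tower. Here MA pays off at once: it gives $\mathfrak{t}=\mathfrak{c}=\omega_{2}$, so every $\subseteq^{\ast}$-decreasing sequence of length $<\omega_{2}$ has a pseudo-intersection, and therefore any tower inside $\mathcal{U}$ would have to have length exactly $\omega_{2}$. The serious difficulty is that there are $2^{\omega_{2}}$ candidate $\omega_{2}$-towers (and just as many tall \textsf{P}-ideals in the final model), far too many to defeat individually in an iteration of length $\omega_{2}$; and, insidiously, the pseudo-intersections I am forced to add while meeting ideals are themselves at risk of lining up into a tower. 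Both issues must be overcome by a single uniform feature of the generic rather than by enumeration. The route I would pursue is a reflection argument: given a $\subseteq^{\ast}$-decreasing $\langle T_{\xi}:\xi<\omega_{2}\rangle\subseteq\mathcal{U}$, the finite supports and the ccc produce a club of $\delta<\omega_{2}$ of cofinality $\omega_{1}$ for which $\{T_{\xi}:\xi<\delta\}\subseteq\mathcal{F}_{\delta}\in V_{\delta}$, and one arranges the iterands so that a pseudo-intersection of each reflected segment is added to $\mathcal{U}$. The delicate point---and the crux of the whole proof---is to make these choices cohere, so that the pseudo-intersections added at the reflection points do not themselves form an $(\omega_{2},\omega_{2})$-gap but instead fuse into a single pseudo-intersection of the full chain $\langle T_{\xi}:\xi<\omega_{2}\rangle$. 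It is precisely this coherence requirement that forces the iterands and the bookkeeping to be designed in tandem. Because every iterand remains ccc, the MA bookkeeping is untouched, so MA and the non-covering property are secured simultaneously.
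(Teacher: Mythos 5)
This statement is quoted by the paper from \cite{CozeroAccesiblePoints} without proof, so there is no in-paper argument to compare against; judged on its own merits, your proposal has a genuine gap, and it sits exactly where you say the ``crux'' is. Your final paragraph is a statement of the problem, not a solution: an iteration of length $\omega_{2}$ has only $\omega_{2}$ stages, while the final model has $2^{\omega_{2}}$ tall \textsf{P}-ideals, and a tall \textsf{P}-ideal of the final model need not trace to a tall \textsf{P}-ideal (or to any set at all) in an intermediate model $V_{\delta}$, so ``ideals realized at some stage'' does not exhaust the task. Worse, your invariant that ``the ideals still to be treated remain positive over $\mathcal{F}_{\alpha}$'' cannot even be formulated at stage $\alpha$, since the dangerous ideals may not exist yet: nothing prevents the ultrafilter-deciding steps from placing $\omega\setminus A$ into $\mathcal{F}_{\alpha}$ for every $A$ in what will later generate a tall \textsf{P}-ideal. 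The proposed rescue---reflecting a decreasing $\omega_{2}$-chain to club-many $\delta$ and fusing pseudointersections of the reflected segments into one for the whole chain---has no mechanism behind it, and the obstacle is real: under \textsf{MA} $+$ $\mathfrak{c}=\omega_{2}$ there are towers of length $\omega_{2}$ \emph{all} of whose proper initial segments have pseudointersections (any tower in that model is such, since $\mathfrak{t}=\omega_{2}$), so reflection plus segment-wise pseudointersections by itself proves nothing about the full chain. This is precisely why the known arguments (Dow's cited proof, and the Cohen-model machinery of Sections \ref{Seccion submodelos}--\ref{Seccion ultrafiltros y torres} here) replace bookkeeping by a uniform device---names transferred between elementary submodels---that handles all candidates at once.

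A second, independent error: your opening reduction ``it suffices to produce $\mathcal{U}$ that contains no tower (equivalently, that meets every tall \textsf{P}-ideal)'' conflates two statements that are only known to be equivalent in special models. Meeting every tall \textsf{P}-ideal is indeed equivalent, by duality, to containing no nowhere dense \textsf{P}-filter, and that is the correct target. But ``contains no tower'' is formally weaker: it is equivalent only when every nowhere dense \textsf{P}-filter contains a tower, which holds under \textsf{CH} (Proposition \ref{CH pideal contiene torre}) and in Cohen extensions of \textsf{CH} models (Proposition \ref{P ideales modelo de Cohen}), but is not available under \textsf{MA} $+$ $\mathfrak{c}=\omega_{2}$: a nowhere dense \textsf{P}-filter there must have character $\omega_{2}$, the \textsf{P}-filter property only supplies pseudointersections of \emph{countable} subfamilies inside the filter, and \textsf{MA} gives pseudointersections of $\omega_{1}$-sized subfamilies with no guarantee they lie in the filter, so the recursive construction of a tower inside the filter breaks at stages of cofinality $\omega_{1}$. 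Thus even if your tower-killing scheme worked, it would not by itself yield the covering conclusion; your construction must be verified directly against all tall \textsf{P}-ideals (equivalently, nowhere dense \textsf{P}-filters) of the final model, which returns you to the cardinality obstruction above.
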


\begin{theorem}
[Dow \cite{PFiltersCohenRandomLaver}]If $\omega_{2}$ Cohen reals are added to
a model of \textsf{CH + }$\square_{\omega_{1}},$ then $\omega^{\ast}$ can not
be covered by nowhere dense \textsf{P}-sets. \label{Alan Cohen}
\end{theorem}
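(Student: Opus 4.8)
The plan is to pass to the Stone-dual side and use the stated equivalence of items (1) and (3): it suffices to produce, in the extension $V[G]$ obtained by adding $\omega_2$ Cohen reals to a model $V\models\mathsf{CH}+\square_{\omega_1}$, a single (non-principal) ultrafilter $\mathcal U$ on $\omega$ that meets every tall $\mathsf P$-ideal. I would construct $\mathcal U=\bigcup_{\alpha<\omega_2}\mathcal F_\alpha$ as an increasing union of filters along a recursion of length $\mathfrak c=\omega_2$, where at stage $\alpha$ I throw into the filter a witness $A_\alpha$ lying in a target tall $\mathsf P$-ideal $\mathcal I_\alpha$ (so that $\mathcal I_\alpha^\ast\not\subseteq\mathcal U$, i.e. $\mathcal U\cap\mathcal I_\alpha\neq\emptyset$), all the while preserving the finite-intersection property so that the union remains a proper filter that can finally be completed to an ultrafilter. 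Since a tall $\mathsf P$-ideal meeting $\mathcal U$ is exactly the dual of a nowhere dense $\mathsf P$-filter that fails to be contained in $\mathcal U$, succeeding for every tall $\mathsf P$-ideal gives, via (1)$\Leftrightarrow$(3), that $\omega^\ast$ is not covered by nowhere dense $\mathsf P$-sets.

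The first genuine difficulty is that there are $2^{\omega_2}$ tall $\mathsf P$-ideals but only $\omega_2$ stages, so I cannot enumerate them directly; here I would use the finite-support product structure of Cohen forcing together with a reflection argument. Fix in $V$ a continuous $\in$-increasing chain $\langle M_\alpha:\alpha<\omega_2\rangle$ of elementary submodels of a large $H(\theta)$ with $|M_\alpha|=\omega_1$, $\omega_1\subseteq M_0$, $M_\alpha\cap\omega_2\in\omega_2$, and containing the relevant names. Writing $\mathbb C_\alpha=\mathrm{Fn}(M_\alpha\cap\omega_2,2)$ and $G_\alpha=G\cap\mathbb C_\alpha$, the key structural fact is that every tall $\mathsf P$-ideal $\mathcal I$ of $V[G]$ is, on a club of $\alpha$, reflected by $\mathcal I\cap V[G_\alpha]$, which is again a tall $\mathsf P$-ideal of $V[G_\alpha]$: because a $\mathsf P$-ideal is $\sigma$-directed and the forcing is ccc, a name for $\mathcal I$ is essentially captured inside $M_\alpha$. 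Thus it suffices, with a suitable book-keeping along the chain, to defeat only the tall $\mathsf P$-ideals living in the intermediate models $V[G_\alpha]$, of which there are $\omega_2$-many in total.

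To defeat a reflected tall $\mathsf P$-ideal $\mathcal I\in V[G_\alpha]$ without collapsing the filter, I would combine tallness with the genericity of the Cohen reals added beyond $M_\alpha$. Given the $\mathsf F_\alpha$-positive sets available at this stage, tallness of $\mathcal I$ produces an infinite member $A_\alpha\in\mathcal I$ sitting below the current generators, and a Cohen real over $V[G_\alpha]$—which avoids every nowhere dense set coded there, in particular the dual nowhere dense $\mathsf P$-filter $\mathcal I^\ast$—guarantees that such an $A_\alpha$ can be chosen compatibly with $\mathcal F_\alpha$, so that $\mathcal F_{\alpha+1}$ remains proper while $\mathcal F_{\alpha+1}\cap\mathcal I\neq\emptyset$. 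The point of invoking the generic real is that it lets me select the witness \emph{after} the ideal has appeared, rather than having to anticipate $\mathcal I$ in the ground model.

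The hard part, and the reason $\square_{\omega_1}$ is assumed, is the behavior at limit stages $\delta<\omega_2$ of cofinality $\omega_1$. There the reflection of a $\sigma$-directed $\mathsf P$-ideal through an $\omega_1$-length union is delicate: one must amalgamate an $\omega_1$-chain of filter-and-witness data into a single proper filter still meeting all previously handled ideals, and since genuine $\mathsf P$-filters of character $\omega_1$ occur in $V[G]$, one cannot simply take pseudo-intersections of $\omega$-sequences. I would fix in $V$ a $\square_{\omega_1}$-sequence $\langle C_\delta:\delta<\omega_2\ \text{limit}\rangle$—preserved by the ccc forcing—and use its coherence, namely $C_\gamma=C_\delta\cap\gamma$ for every limit point $\gamma$ of $C_\delta$, to thread a single coherent scaffolding through all cofinality-$\omega_1$ limits simultaneously, so that each such limit is organized as a coherent direct limit along $C_\delta$ rather than an uncontrolled $\omega_1$-union. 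Verifying that this coherent system keeps $\mathcal F_\delta$ proper and keeps every earlier diagonalization intact is the crux of the whole argument; once the recursion survives all limits, completing $\bigcup_{\alpha<\omega_2}\mathcal F_\alpha$ to an ultrafilter yields the desired $\mathcal U$, and the conclusion follows.
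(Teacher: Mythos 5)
There is a genuine gap, and it sits at the successor step, which is the load-bearing part of your recursion. Tallness of a reflected ideal $\mathcal{I}_\alpha$ only says that every \emph{single} infinite set has infinite intersection with some member of $\mathcal{I}_\alpha$; it does not produce a member of $\mathcal{I}_\alpha$ that is positive with respect to the filter $\mathcal{F}_\alpha$ once $\mathcal{F}_\alpha$ has uncountably many generators and hence, in general, no pseudointersection. Nothing in your construction prevents the earlier stages from having already inserted into $\mathcal{F}_\alpha$ (a large enough piece of) the dual filter $\mathcal{I}_\alpha^{\ast}$ of a tall \textsf{P}-ideal that only shows up at stage $\alpha$, at which point \emph{no} element of $\mathcal{I}_\alpha$ is $\mathcal{F}_\alpha$-positive and the stage is unwinnable. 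Your appeal to a Cohen real over $V[G_\alpha]$ does not repair this: the conflict is between two objects, $\mathcal{F}_\alpha$ and $\mathcal{I}_\alpha$, that are both already present, and genericity of a later real gives no selection mechanism for a positive witness when none exists (also note that a ``nowhere dense filter'' in the paper's sense --- one with no pseudointersection --- is not being avoided by the Cohen real in any way that bears on compatibility with $\mathcal{F}_\alpha$). Steering the recursion so that this situation never arises would require anticipating all tall \textsf{P}-ideals of the final model at every stage, which is exactly the difficulty the theorem has to overcome; your proposal acknowledges a ``crux'' at the $\square_{\omega_1}$ limit stages but leaves it as a placeholder, and in fact the crux is already fatal at successors. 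Two secondary problems: the trace $\mathcal{I}\cap V[G_\alpha]$ need not be an \emph{element} of $V[G_\alpha]$ (this can be repaired, but only by arranging $\omega$-closure of club-many $M_\alpha$, which you do not do), and no actual use of the coherence of the square sequence is ever specified.

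The actual argument (Dow's original, and the strengthening in this paper) is not a transfinite diagonalization at all. One first reduces to increasing towers: in a Cohen extension of a \textsf{CH} model, every tall \textsf{P}-ideal contains an increasing tower (pass to an intermediate \textsf{CH} model capturing the ideal and use that Cohen forcing preserves towers), and by Kunen's theorem all towers in the extension have length $\omega_1$. Then the ultrafilter is obtained in one shot: to every name $\mathcal{A}=\{\dot{A}_\alpha(\mathcal{A})\mid\alpha\in\omega_1\}$ for an increasing tower one assigns a countable elementary submodel $N(\mathcal{A})$ containing it and selects the single name $\dot{A}_{\delta(\mathcal{A})}(\mathcal{A})$ indexed by the model's height; the whole content of the proof is the verification, by \emph{finite} induction, that this family of names is forced to be centered. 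That verification is where the real machinery lives: one transfers the finitely many names involved into a common submodel using automorphisms of the Cohen algebra (permutations of the index set that fix appropriate supports), controlled by a combinatorial structure on the submodels --- in the original paper this control came from $\square_{\omega_1}$, and in the present paper it is replaced by $n$-models and decisive partitions. So where you try to meet each ideal dynamically and must survive limits, the paper's approach makes one canonical choice per tower-name and pays instead with the name-transfer combinatorics; the latter is what makes the finite intersections provably infinite without any foresight problem.
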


\begin{theorem}
[Dow \cite{PFiltersCohenRandomLaver}]In the Laver model, every ultrafilter
contains a tower (hence, $\omega^{\ast}$ can be covered with nowhere dense
\textsf{P}-sets).
\end{theorem}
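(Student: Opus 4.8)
The plan is to work in the \emph{Laver model} $V[G]$, obtained by the countable support iteration $\langle \mathbb{P}_\alpha, \dot{\mathbb{Q}}_\alpha : \alpha<\omega_2\rangle$ of Laver forcing over a ground model $V\models\textsf{CH}$, writing $G_\alpha=G\cap\mathbb{P}_\alpha$. I will use the standard facts that this iteration is proper, has the $\aleph_2$-chain condition, satisfies the \emph{Laver property}, and that the generic (Laver) reals $\langle \ell_\alpha : \alpha<\omega_2\rangle$ form a $\le^*$-increasing dominating scale, so that $\mathfrak{c}=\mathfrak{b}=\mathfrak{d}=\aleph_2$ and every family of size $\aleph_1$ in $\omega^\omega$ is dominated. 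By the Stone-duality equivalences recalled above, it suffices to prove that every ultrafilter $U$ on $\omega$ in $V[G]$ contains a tower: if $\langle T_\xi : \xi<\lambda\rangle$ is a tower with every $T_\xi\in U$, then the dual filter is a nowhere dense \textsf{P}-filter and the closed set $\bigcap_{\xi<\lambda}\overline{T_\xi}\cap\omega^{\ast}$ is a nowhere dense \textsf{P}-set containing the point $U$; letting $U$ range over $\omega^{\ast}$ yields the covering.

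Fix an ultrafilter $U$. The first step is a recursive construction of a $\subseteq^*$-decreasing sequence $\langle T_\xi : \xi<\omega_2\rangle$ with every $T_\xi\in U$ and no pseudo-intersection, driven by a bookkeeping that, via the $\aleph_2$-cc, lists every real of $V[G]$ as some $B_\gamma\in V[G_\gamma]$ appearing before stage $\gamma$. At successor stages the diagonalization uses only that $U$ is an ultrafilter: given $T_\xi$ and target $B_\xi$, if $B_\xi\cap T_\xi\notin U$ set $T_{\xi+1}=T_\xi\setminus B_\xi\in U$, while if $B_\xi\cap T_\xi\in U$ split it into two infinite pieces, keep the one lying in $U$, and discard the other. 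In either case $T_{\xi+1}\subseteq^* T_\xi$ lies in $U$ and $B_\xi\setminus T_{\xi+1}$ is infinite, so $B_\xi$ is killed as a potential pseudo-intersection. Since any pseudo-intersection of the whole sequence is some $B_\gamma$, killed at stage $\gamma+1$, the resulting sequence is a tower in $U$ --- \emph{provided the recursion survives every limit stage}.

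The main obstacle is exactly the limit stages of cofinality $\omega$. To continue past $\lambda=\sup_n\alpha_n$ I need $T_\lambda\in U$ with $T_\lambda\subseteq^* T_{\alpha_n}$ for all $n$, that is, a pseudo-intersection of a countable decreasing subfamily that \emph{lies in $U$}. If $U$ is a \textsf{P}-point this is automatic and the construction goes through verbatim, recovering the known fact that \textsf{P}-points contain towers; note also that, by the Kunen--van Mill theorem, no such construction can succeed for \emph{every} $U$ in the \textsf{CH} ground model, so the length-$\omega_2$ iteration must genuinely be used. For a general $U$ the countable family may have no pseudo-intersection in $U$, and here the specific structure of the Laver model enters. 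I would reflect to a stage $\delta$ with $\mathrm{cf}(\delta)=\omega_1$, where $U_\delta=U\cap V[G_\delta]$ is an ultrafilter in the intermediate model, and exploit the preservation of \textsf{P}-points together with the Laver property along the countable support iteration to arrange that the countable subfamilies produced at the $\omega$-cofinal limits always admit a pseudo-intersection inside $U$ --- supplied by a preserved \textsf{P}-point sitting beneath $U$ and by the dominating Laver reals, which simultaneously guarantee that no family of size $\aleph_1$ can pseudo-intersect the final sequence. I expect the genuinely hard part to be this coordination: showing, by a fusion and preservation argument tailored to Laver forcing and its iteration, that the tail forcing $\mathbb{P}_{[\delta,\omega_2)}$ adds no pseudo-intersection collapsing the tower while the countable limits in the recursion remain passable. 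This preservation step, rather than the bookkeeping, is where the weight of the theorem lies.
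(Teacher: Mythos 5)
This theorem is one the paper only \emph{quotes} from \cite{PFiltersCohenRandomLaver}; no proof of it appears here, so your sketch has to stand on its own, and it does not: the bookkeeping and successor steps are routine and correct (as is the derivation of the covering statement from ``every ultrafilter contains a tower''), but the entire weight of the theorem sits on the limit stages, and there you have named the difficulty rather than resolved it. Concretely: (i) a ``preserved \textsf{P}-point sitting beneath $U$'' cannot mean a \textsf{P}-point $W\subseteq U$ --- ultrafilters are maximal, so $W\subseteq U$ forces $W=U$, and an arbitrary ultrafilter of the Laver model is certainly not a \textsf{P}-point. What is true is that \textsf{NCF} holds in the Laver model, so $U$ is nearly coherent to the ultrafilter generated by a preserved ground-model \textsf{P}-point; but near coherence hands you members of $U$ only after pulling back along a finite-to-one map, and once you pull back a tower this way you have reproduced the Zhu/Brendle--Farkas--Verner route (\cite{RemarkonNowheredensePsets}, \cite{TowersinFilters}, quoted in this paper as the \textsf{NCF} theorem), which makes your $\omega_2$-recursion superfluous --- and even that route still requires the nontrivial preservation fact that some ground-model tower (say the base of a simple $P_{\aleph_1}$-point, available under \textsf{CH}) generates an ultrafilter after the entire countable-support iteration, which you have not proved. (ii) Your reflection step asserts that $U_\delta=U\cap V[G_\delta]$ ``is an ultrafilter in the intermediate model''; $U_\delta$ does decide every set of $V[G_\delta]$, but it is an object of $V[G]$, and $U_\delta\in V[G_\delta]$ is exactly the kind of claim that needs a reflection/preservation argument and fails for arbitrary $\delta$. (iii) You flag only limits of cofinality $\omega$, but the same obstruction occurs at limits of cofinality $\omega_1$: the chain built so far may have pseudointersections none of which lie in $U$, and there \textsf{P}-point-ness of anything is useless. (Note that the precise obstruction at any limit is ``pseudointersections exist but none belongs to $U$'': if the chain has no pseudointersection at all you are done early, since it is already a tower inside $U$; your write-up neither exploits this nor addresses the real case.)

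In short, the skeleton --- diagonalization with bookkeeping, the hard work deferred to passing limits --- is a reasonable opening move, and your instinct that \textsf{P}-point preservation and the structure of the Laver iteration must enter is sound; but the coordination you postpone, namely that the intermediate ultrafilters and the tail forcing can be arranged so that every countable (and $\omega_1$-sized) subfamily met by the recursion has a pseudointersection in $U$, \emph{is} the theorem, and nothing in your sketch indicates how to prove it. As written this is a plan for a proof, not a proof: to complete it you would either have to develop the preservation machinery of \cite{PFiltersCohenRandomLaver} or commit to the near-coherence argument and supply the tower-preservation fact it needs.
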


\begin{theorem}
[Zhu \cite{RemarkonNowheredensePsets}, Brendle, Farkas, Verner
\cite{TowersinFilters}]The principle of \emph{Near Coherence of Filters
(}\textsf{NCF}\emph{)} implies that $\omega^{\ast}$ can be covered by nowhere
dense \textsf{P}-sets.
\end{theorem}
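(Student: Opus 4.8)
The plan is to prove the statement through the combinatorial reformulation recorded in item (3) of the list, equivalently the tower formulation of the abstract: I will show that \textsf{NCF} implies that \emph{every} ultrafilter on $\omega$ contains a tower, which is exactly the assertion that $\omega^{\ast}$ can be covered by nowhere dense \textsf{P}-sets. The starting point is Blass's characterization $\mathsf{NCF}\Leftrightarrow\mathfrak{u}<\mathfrak{g}$, together with his structural theorem that $\mathfrak{u}<\mathfrak{g}$ produces a simple $P_{\mathfrak{u}}$-point $\mathcal{W}$ — an ultrafilter admitting a $\subseteq^{\ast}$-decreasing base $\langle W_{\alpha}:\alpha<\mathfrak{u}\rangle$ — to which every ultrafilter is nearly coherent. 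Recall that any simple $P_{\kappa}$-point forces $\mathrm{cf}(\kappa)>\omega$, since otherwise a countable cofinal subfamily would have a pseudo-intersection in the $P$-filter and hence yield a pseudo-intersection of the whole base, which no nonprincipal ultrafilter can possess.

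Fix an arbitrary ultrafilter $\mathcal{V}$. By near coherence there is a finite-to-one map $f:\omega\to\omega$, which may be taken monotone and surjective, with $f(\mathcal{V})=f(\mathcal{W})$. The first key observation is that finite-to-one images of simple $P_{\kappa}$-points are again simple $P_{\kappa}$-points. Using that $f$ preserves $\subseteq^{\ast}$ (if $A\subseteq^{\ast}B$ then $f(A)\setminus f(B)\subseteq f(A\setminus B)$ is finite), the sequence $\langle f(W_{\alpha}):\alpha<\mathfrak{u}\rangle$ is $\subseteq^{\ast}$-decreasing, and since $f(f^{-1}(A))=A$ it is a base for $f(\mathcal{W})=f(\mathcal{V})$. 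Being a base of a nonprincipal ultrafilter, it has no pseudo-intersection, so $\langle f(W_{\alpha})\rangle$ is a tower.

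Next I would pull this tower back into $\mathcal{V}$. Set $S_{\alpha}=f^{-1}(f(W_{\alpha}))$. Since $f$ is finite-to-one, $f^{-1}$ also preserves $\subseteq^{\ast}$, so $\langle S_{\alpha}\rangle$ is $\subseteq^{\ast}$-decreasing, and $f(W_{\alpha})\in f(\mathcal{V})$ gives $S_{\alpha}\in\mathcal{V}$. If some infinite $P$ were a pseudo-intersection of $\langle S_{\alpha}\rangle$, then $f(P)$ (infinite, as $f$ is finite-to-one) would satisfy $f(P)\subseteq^{\ast}f(S_{\alpha})=f(W_{\alpha})$ for all $\alpha$, contradicting the previous paragraph. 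Hence $\langle S_{\alpha}\rangle$ is a tower contained in $\mathcal{V}$; as $\mathrm{cf}(\mathfrak{u})>\omega$, it has uncountable cofinality and therefore generates a nowhere dense \textsf{P}-filter inside $\mathcal{V}$. Since $\mathcal{V}$ was arbitrary, every ultrafilter contains a tower, and the stated equivalences finish the proof.

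The main obstacle — and the genuine content — is the input cited from Blass: that $\mathfrak{u}<\mathfrak{g}$ yields a simple $P_{\mathfrak{u}}$-point serving as a near-coherence representative for all ultrafilters. Granting this, the remainder is a routine image/preimage computation. The two points deserving care are the preservation facts for $\subseteq^{\ast}$ under $f$ and under $f^{-1}$ for finite-to-one $f$, and the observation that it is essential that $\langle W_{\alpha}\rangle$ be a \emph{base} of $\mathcal{W}$ (a simple $P$-point) rather than merely a tower inside it: only then does the image tower $\langle f(W_{\alpha})\rangle$ inherit the absence of a pseudo-intersection, which is precisely what drives the pullback.
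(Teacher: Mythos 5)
First, note that the paper does not prove this theorem; it is quoted from Zhu and from Brendle--Farkas--Verner, so your proposal can only be measured against the arguments in that literature, and in mechanism it matches them: project both ultrafilters along the near-coherence map, exploit a distinguished ultrafilter on one side, and pull the resulting structure back. Your image/preimage computations are all correct: $f$ preserves $\subseteq^{\ast}$, $f^{-1}$ preserves it because $f$ is finite-to-one, the image of a base is a base of $f(\mathcal{W})=f(\mathcal{V})$, a linearly ordered base of a nonprincipal ultrafilter has no pseudointersection, and the pulled-back sequence $\langle S_{\alpha}\rangle$ is a tower in $\mathcal{V}$ of uncountable cofinality, hence generates a nowhere dense \textsf{P}-filter; by the equivalence of (1) and (2) in the introduction this yields the covering. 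You are also right to flag that the base property (rather than mere towerhood) is what makes the image have no pseudointersection; the image of a tower that is not a base need not be a tower.

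The genuine gap is your stated input: ``Blass's characterization $\mathsf{NCF}\Leftrightarrow\mathfrak{u}<\mathfrak{g}$'' is not a theorem. Blass proved $\mathfrak{u}<\mathfrak{g}\Rightarrow\mathsf{NCF}$; whether $\mathsf{NCF}$ implies $\mathfrak{u}<\mathfrak{g}$ is a well-known open problem (raised by Blass himself). Your argument uses exactly the open direction -- you assume $\mathsf{NCF}$ and pass through $\mathfrak{u}<\mathfrak{g}$ to obtain the simple $P_{\mathfrak{u}}$-point -- so as written the proof does not establish the stated theorem, only the weaker ``$\mathfrak{u}<\mathfrak{g}$ implies $\omega^{\ast}$ is covered.'' The repair is to replace this input by what the cited proofs actually use. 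For the tower version (Brendle--Farkas--Verner) one cites Blass's theorem that $\mathsf{NCF}$ outright yields a simple $P_{\mathfrak{u}}$-point, and then your pullback is verbatim the standard argument. Alternatively, for the covering statement itself a \textsf{P}-point suffices, and that much does follow from $\mathsf{NCF}$ by established results: Blass proved $\mathsf{NCF}\Rightarrow\mathfrak{u}<\mathfrak{d}$, and any ultrafilter generated by fewer than $\mathfrak{d}$ sets is a \textsf{P}-point. Given such a $\mathcal{W}$ and $f$ with $f(\mathcal{V})=f(\mathcal{W})$, the filter generated by $\{f^{-1}(B)\mid B\in f(\mathcal{W})\}$ is already a nowhere dense \textsf{P}-filter inside $\mathcal{V}$: the \textsf{P}-filter property descends from the \textsf{P}-point $f(\mathcal{W})$ since $f^{-1}$ preserves $\subseteq^{\ast}$, and any pseudointersection would push forward (your own computation) to a pseudointersection of the ultrafilter $f(\mathcal{W})$. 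This is essentially Zhu's proof, and it needs no linearly ordered base at all.
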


\begin{theorem}
[Dow, van Mill \cite{NowhereDenseCCCPSets}]No compact space can be covered by
nowhere dense ccc \textsf{P}-sets.
\end{theorem}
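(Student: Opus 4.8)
The plan is to argue by contradiction and to reduce the problem to the Kunen--van Mill theorem (Theorem \ref{KunenvanMill}) by passing to a quotient of $\pi$-weight $\omega_1$. Suppose $X$ is compact and $X=\bigcup_{\alpha<\kappa}P_\alpha$, where each $P_\alpha$ is a nowhere dense \textsf{P}-set that is ccc in its subspace topology. The first observation is that a single nowhere dense closed set is easy to avoid: given any nonempty open $U$, since $P_\alpha$ is closed and nowhere dense, $U\setminus P_\alpha$ is nonempty open, and by regularity there is a nonempty open $V$ with $\overline V\subseteq U\setminus P_\alpha$. This already handles countable covers through a Baire-category argument, so all the work lies in taming the uncountably many $P_\alpha$; the ccc hypothesis is what should let an uncountable cover reflect to a countable-weight amount of information.

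Next I would fix a base $\mathcal B$ for $X$ and an elementary submodel $M\prec H(\theta)$ with $|M|=\omega_1$, $\omega_1\subseteq M$, $M^\omega\subseteq M$ (closed under countable sequences), and $X,\mathcal B,\langle P_\alpha:\alpha<\kappa\rangle\in M$. Define $Y$ to be the quotient of $X$ obtained by identifying points that are not separated by any member of $\mathcal B\cap M$, and let $q\colon X\to Y$ be the quotient map. Then $Y$ is compact Hausdorff and has weight at most $|\mathcal B\cap M|=\omega_1$, so in particular its $\pi$-weight is at most $\omega_1$, which puts Theorem \ref{KunenvanMill} in range.

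For each $\alpha\in M$ I would check that $q[P_\alpha]$ is again a nowhere dense \textsf{P}-set in $Y$. Nowhere density is preserved because $q$ is a continuous surjection and, by the avoidance remark above applied inside $M$, the dense open set $X\setminus P_\alpha$ projects to a dense open subset of $Y$. The \textsf{P}-set property is where the closure of $M$ under $\omega$-sequences is used: a countable family of neighborhoods of $q[P_\alpha]$ pulls back to a countable family of neighborhoods of $P_\alpha$ coded in $M$, whose intersection is again a neighborhood witnessed in $M$ because $P_\alpha$ is a \textsf{P}-set, and elementarity transports this back down to $Y$. The decisive and hardest step is to show that $Y$ is already covered by $\{q[P_\alpha]:\alpha\in M\}$: given $y\in Y$, choose $x\in X$ with $q(x)=y$ and $\alpha<\kappa$ with $x\in P_\alpha$, and one must produce some $\beta\in M$ with $q^{-1}(y)\cap P_\beta\neq\emptyset$. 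This is exactly the point at which ccc is indispensable: the ccc of the sets $P_\beta$ (equivalently, the absence of an uncountable family of pairwise disjoint open sets each meeting a given $P_\beta$) should force the relevant covering statement to be countably determined, so that elementarity of $M$ reflects a suitable $P_\beta\in M$ down to the fiber $q^{-1}(y)$.

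Granting these three points, $Y$ is a compact space of $\pi$-weight at most $\omega_1$ that is covered by nowhere dense \textsf{P}-sets, so \emph{every} point of $Y$ lies in some nowhere dense \textsf{P}-set, directly contradicting Theorem \ref{KunenvanMill}. I expect the main obstacle to be the covering claim of the previous paragraph: verifying that the ccc hypothesis genuinely yields the reflection needed to cover all of the quotient $Y$, rather than just the projections of the $P_\alpha$ with $\alpha\in M$, and in particular pinning down the precise ccc-based argument showing that the fibers of $q$ are met by \textsf{P}-sets indexed in $M$. The preservation of the \textsf{P}-set property under $q$ is routine given $M^\omega\subseteq M$, and the nowhere-density preservation is immediate, so the ccc reflection is the crux on which the whole reduction rests.
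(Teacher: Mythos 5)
There is a genuine gap, and it sits exactly where you placed your hopes: the covering claim. First, a point of orientation: this theorem is quoted in the paper from \cite{NowhereDenseCCCPSets} and is not proved there, so the comparison is with the original Dow--van Mill argument, which is a direct transfinite construction (a decreasing $\omega_1$-chain of closed sets, where the ccc of each \textsf{P}-set is used to bound how often a single \textsf{P}-set can recur along the chain), not a quotient-and-reflect argument. Your reduction to Theorem \ref{KunenvanMill} would be a legitimate alternative route \emph{if} the reflection step were established, but you offer no mechanism for it, and it is not a formality: elementary-submodel reflection of closed covers of compact spaces is false in general. Concretely, in $X=2^{\omega_2}$ covered by the closed nowhere dense sets $C_\alpha=\left\{ x\mid x\left( \alpha\right) =0\right\} $ together with the singleton of the constant-one function, the fiber of your quotient over the class of the all-ones point consists of all $x$ that are identically $1$ on $M\cap\omega_2$, and this fiber meets $C_\beta$ for no $\beta\in M$. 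These $C_\alpha$ are of course not \textsf{P}-sets, so there is no conflict with the theorem; but the example shows that the \textsf{P}-set and ccc hypotheses must enter the covering step in an essential way that you have not supplied. Saying that ccc ``should force the covering statement to be countably determined'' is a restatement of what needs proof --- in effect, the entire content of the theorem --- not an argument. Note also that ccc is a property of each $P_\alpha$ separately, so it gives no direct control over which of the $\kappa$-many sets meet a given fiber; the known use of ccc here is quite different (an uncountable pairwise disjoint family of open sets has pairwise disjoint traces on $P_\alpha$, so only countably many of them can meet a fixed ccc set).

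Two of your auxiliary steps are also stated incorrectly, though both are repairable. The claim that nowhere density of $q\left[ P_\alpha\right] $ is ``immediate'' because ``the dense open set $X\setminus P_\alpha$ projects to a dense open subset of $Y$'' is false as written: quotient maps do not take open sets to open sets, and $q\left[ X\setminus P_\alpha\right] $ can even be all of $Y$; the correct route is to realize $Y$ as the image of $X$ under the diagonal of $C\left( X\right) \cap M$ and use elementarity to pick, inside $M$, a closed set disjoint from $P_\alpha$ together with a separating function, so that disjointness survives the quotient. Similarly, your \textsf{P}-set preservation argument glosses over the saturation problem: a neighborhood of $P_\alpha$ in $X$ need not be a neighborhood of the full preimage $q^{-1}\left( q\left[ P_\alpha\right] \right) $, whose fibers can stick out of it; again this is fixable with $M^\omega\subseteq M$ and $M$-definable separating functions, but not by the pullback-pushforward sketch you give. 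With those repairs the soft parts of your plan go through; what is missing is precisely the hard part, and as it stands the proposal does not contain a proof.
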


\qquad\ \qquad\ \ 

The purpose of this paper is to improve upon Theorem \ref{Alan Cohen}. We show
that if fewer than $\aleph_{\omega}$ Cohen reals are added to a model of
\textsf{CH, }then there is an ultrafilter that intersects every tall
\textsf{P}-ideal (the principle $\square_{\omega_{1}}$ is no longer needed).
Our approach builds on the argument presented in
\cite{PFiltersCohenRandomLaver}. However, by carrying out a more refined
analysis of the interaction between the countable elementary submodels used in
that proof, we were able to eliminate the need of $\square_{\omega_{1}}$ and
extend the result for more than $\omega_{2}$ Cohen reals. Although our proof
is heavily inspired by \cite{PFiltersCohenRandomLaver}, no familiarity with
that paper is needed to follow the arguments we present here. This work shares
some conceptual overlap with our earlier paper
\cite{UltrafiltersinRandomModel}. While we will reference several results from
that work, the present paper remains self contained, readers only need to
accept the quoted results and require no prior familiarity with
\cite{UltrafiltersinRandomModel}.

\ \ \ \ \qquad\qquad\qquad\ \ \ \qquad\ \ \ \ \ \ \qquad\ \ \ 

The paper is organized as follows. After a review of notation and some
preliminaries, in Section \ref{SeccionCH} we present a proof that \textsf{CH
}implies that $\omega^{\ast}$ can not be covered by nowhere dense
\textsf{P}-sets. This result is not new and follows from Theorem
\ref{KunenvanMill} above. However, we included this result since the proof
serves as motivation for some of the ideas that will be used in the main part
of the paper. In Section \ref{Seccion submodelos}, we develop the main
technical results that we will need in the next section. We obtain several
combinatorial results regarding countable elementary submodels that may have
more applications in the future. In Section
\ref{Seccion ultrafiltros y torres}, we use the ideas and results previously
developed to prove that after adding less $\aleph_{\omega}$ Cohen or random
reals to a model of \textsf{CH}, there is an ultrafilter that intersects every
increasing tower. We use this result in Section \ref{Seccion Covering} to
conclude that if fewer than $\aleph_{\omega}$-many Cohen reals are added to a
model of \textsf{CH}, then $\omega^{\ast}$ can not be covered by nowhere dense
\textsf{P}-sets. We finish the paper with some open questions.

\section{Notation}

If $X$ is any set, we denote by $\mathcal{P}\left(  X\right)  $ its power set
(the collection of all its subsets). We say that $\mathcal{I}\subseteq
\mathcal{P}\left(  X\right)  $ is an \emph{ideal on }$X$ if $\emptyset
\in\mathcal{I}$, $X\notin\mathcal{I}$ and it is closed under finite
intersections and taking subsets. On the other hand, $\mathcal{F\subseteq}$
$\mathcal{P}\left(  X\right)  $ is a \emph{filter on }$X$ if $X\in\mathcal{F}%
$, $\emptyset\notin\mathcal{F},$ and it is closed under finite unions and
taking supersets. An \emph{ultrafilter} is a maximal filter that does not
contain any finite set (in this work, all our ultrafilters are assumed to be
non-principal). Let $\mathcal{B}$ be a collection of subsets of $\omega,$
denote $\mathcal{B}^{\ast}=\left\{  \omega\setminus B\mid B\in\mathcal{B}%
\right\}  .$ It is easy to see that if $\mathcal{F}$ is a filter then
$\mathcal{F}^{\ast}$ is an ideal (which we call the \emph{dual ideal of
}$\mathcal{F}$) and if $\mathcal{I}$ an ideal, then $\mathcal{I}^{\ast}$ is a
filter (\emph{the dual filter of} $\mathcal{I}$). For $A,B\subseteq\omega,$
define $A\subseteq^{\ast}B$ ($A$ \emph{is an almost subset of} $B$) if
$A\setminus B$ is finite. For $\mathcal{P\subseteq}$ $\left[  \omega\right]
^{\omega}$ and $A,B\subseteq\omega,$ we say that $A\ $\emph{is a
pseudointersection of }$\mathcal{P}$ if it is almost contained in all elements
of $\mathcal{P},$ while $B$ is a \emph{pseudounion} of $\mathcal{P}$ if it
almost contains every element of $\mathcal{P}.$ We say $\mathcal{C}%
\subseteq\left[  \omega\right]  ^{\omega}$ is\emph{ centered} if the
intersection of every finite subfamily of $\mathcal{C}$ is infinite. Every
centered family can be extended to an ultrafilter.

\begin{definition}
Let $\mathcal{I}$ be an ideal and $\mathcal{F}$ a filter, both on $\omega.$

\begin{enumerate}
\item $\mathcal{I}$ is a \textsf{P}\emph{-ideal} if every countable subfamily
of $\mathcal{I}$ has a pseudounion in $\mathcal{I}.$

\item $\mathcal{F}$ is a \textsf{P}\emph{-filter} if every countable subfamily
of $\mathcal{F}$ has a pseudointersection in $\mathcal{F}.$

\item $\mathcal{I}$ is \emph{tall }if for every $A\in\left[  \omega\right]
^{\omega},$ there is $B\in\mathcal{I}$ such that $A\cap B$ is infinite.

\item $\mathcal{F}$ is \emph{nowhere dense }if it has no pseudointersection.
\end{enumerate}
\end{definition}

\qquad\ \qquad\ \ \ \ 

It is easy to see that $\mathcal{I}$ is a \textsf{P}-ideal if and only if
$\mathcal{I}^{\ast}$ is a \textsf{P}-filter and $\mathcal{I}$ is tall if and
only if $\mathcal{I}^{\ast}$ is nowhere dense. Important examples of tall
\textsf{P}-ideals (nowhere dense \textsf{P}-filters) are the following:

\begin{definition}
Let $\kappa$ be a cardinal.\qquad\ \ \ \qquad\qquad\ \ \ \ 

\begin{enumerate}
\item $\mathcal{A=}$ $\left\{  A_{\alpha}\mid\alpha<\kappa\right\}
\subseteq\left[  \omega\right]  ^{\omega}$ is a \emph{(decreasing) tower} if
$A_{\beta}\subseteq^{\ast}A_{\alpha}$ whenever $\alpha<\beta$ and
$\mathcal{A}$ has no infinite pseudointersection.

\item $\mathcal{B=}$ $\left\{  B_{\alpha}\mid\alpha<\kappa\right\}
\subseteq\left[  \omega\right]  ^{\omega}$ is an \emph{increasing tower} if
$B_{\alpha}\subseteq^{\ast}B_{\beta}$ whenever $\alpha<\beta$ and for every
$X\in\left[  \omega\right]  ^{\omega},$ there is $\gamma<\kappa$ such that
$B_{\alpha}\cap X$ is infinite (equivalently, any pseudounion of $\mathcal{B}$
is cofinite).
\end{enumerate}
\end{definition}

\qquad\qquad\ \ 

It is easy to see that the ideal generated by an increasing tower is a tall
\textsf{P}-ideal, while the filter generated by a tower is a nowhere dense
\textsf{P}-filter. With a very easy recursive construction, we can prove the following:

\begin{proposition}
[\textsf{CH}]Every tall \textsf{P}-ideal contains an increasing tower, and
every nowhere dense \textsf{P}-filter contains a tower.
\label{CH pideal contiene torre}
\end{proposition}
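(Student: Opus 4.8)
The plan is to establish the first assertion by a transfinite recursion of length $\omega_{1}$ (here is where \textsf{CH} enters, via an enumeration $\left[\omega\right]^{\omega}=\left\{X_{\alpha}\mid\alpha<\omega_{1}\right\}$), and then to obtain the second assertion from it by passing to dual filters. So I would fix a tall \textsf{P}-ideal $\mathcal{I}$ and recursively construct a $\subseteq^{\ast}$-increasing sequence $\left\{B_{\alpha}\mid\alpha<\omega_{1}\right\}\subseteq\mathcal{I}$ with the single bookkeeping requirement that $B_{\alpha}\cap X_{\alpha}$ be infinite for every $\alpha$; this requirement is precisely what forces the capturing clause in the definition of an increasing tower.

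For the recursion step, suppose $\left\{B_{\beta}\mid\beta<\alpha\right\}$ has been built. Since $\alpha<\omega_{1}$ this is a countable subfamily of $\mathcal{I}$, so the \textsf{P}-ideal property yields a pseudounion $C\in\mathcal{I}$, meaning $B_{\beta}\subseteq^{\ast}C$ for all $\beta<\alpha$. Applying tallness to the infinite set $X_{\alpha}$, I would pick $D\in\mathcal{I}$ with $D\cap X_{\alpha}$ infinite, and set $B_{\alpha}=C\cup D$. As ideals are closed under finite unions, $B_{\alpha}\in\mathcal{I}$, and $B_{\alpha}$ is infinite since it contains $D\cap X_{\alpha}$. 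Then $B_{\beta}\subseteq^{\ast}C\subseteq B_{\alpha}$ for every $\beta<\alpha$, so the sequence stays $\subseteq^{\ast}$-increasing, while $B_{\alpha}\cap X_{\alpha}\supseteq D\cap X_{\alpha}$ is infinite. Since every infinite set appears as some $X_{\alpha}$ and is met infinitely by the corresponding $B_{\alpha}$, every pseudounion of the sequence must be cofinite. Thus $\left\{B_{\alpha}\mid\alpha<\omega_{1}\right\}$ is an increasing tower contained in $\mathcal{I}$. For the second statement I would simply note that if $\mathcal{F}$ is a nowhere dense \textsf{P}-filter then $\mathcal{F}^{\ast}$ is a tall \textsf{P}-ideal, and an increasing tower $\left\{B_{\alpha}\right\}\subseteq\mathcal{F}^{\ast}$ gives the decreasing tower $\left\{\omega\setminus B_{\alpha}\right\}\subseteq\mathcal{F}$.

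I do not expect a serious obstacle here — the paper itself calls this an easy construction — but the one point that genuinely requires the hypotheses is preserving $\subseteq^{\ast}$-increase at limit stages. At a successor stage the immediate predecessor already dominates all earlier terms, so no pseudounion is needed; it is only at limits, where infinitely many $B_{\beta}$ must be simultaneously dominated by a single member of $\mathcal{I}$, that the \textsf{P}-ideal property is indispensable. Tallness, by contrast, is used only to discharge the capturing requirement at each stage. The verification that the resulting object satisfies the definition of an increasing tower, and the routine dualization for filters, are the only things left to check.
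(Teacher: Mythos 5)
Your proof is correct and is precisely the ``very easy recursive construction'' the paper alludes to -- the paper in fact states Proposition \ref{CH pideal contiene torre} without writing out a proof, and your argument (a \textsf{CH}-enumeration of $\left[\omega\right]^{\omega}$, a pseudounion from the \textsf{P}-ideal property at each stage, tallness to capture $X_{\alpha}$, and dualization via complements for the filter statement) is exactly the intended one. Your closing remark correctly isolates where each hypothesis is needed, and the complementation step does yield a tower, since an infinite pseudointersection of $\left\{\omega\setminus B_{\alpha}\mid\alpha<\omega_{1}\right\}$ would be an infinite set meeting every $B_{\alpha}$ finitely, contradicting the increasing-tower property.
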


\section{Review on elementary submodels}

Elementary submodels are now an essential tool for topologists and set
theorists. Their use is so fundamental that many arguments in topology and
infinite combinatorics would be impossible without them. This is the case in
our paper. We now review the basic facts about elementary submodels needed for
our work, and refer the reader to the survey \cite{AlanSubmodelos} to learn
more about this important technique and its many applications.

\qquad\qquad\ \ \ \ \qquad\ \ \ \ \ \ \ \ 

If $\kappa$ is a cardinal, define \textsf{H}$\left(  \kappa\right)  $ as the
set consisting of all sets whose transitive closure has size less than
$\kappa.$ For a detailed treatment of these models, see the book \cite{Kunen}.
For convenience, fix $\trianglelefteq$ a well order of \textsf{H}$\left(
\kappa\right)  $ and denote by \textsf{Sub}$\left(  \kappa\right)  $ the
collection of all countable $M\subseteq$ \textsf{H}$\left(  \kappa\right)  $
such that $\left(  M,\in,\trianglelefteq\right)  $ is an elementary submodel
of $($\textsf{H}$\left(  \kappa\right)  ,\in,\trianglelefteq).$ Let $M\in$
\textsf{Sub}$\left(  \kappa\right)  .$ The \emph{height of }$M$ is defined
$\delta_{M}=M\cap\omega_{1}.$

\begin{definition}
Let $\delta$ be an ordinal. We say that $\delta$ is \emph{indecomposable }if
it is closed under (ordinal) addition. Equivalently, $\gamma+\delta=\delta$
for every $\gamma<\delta.$
\end{definition}

\qquad\qquad\qquad

The height of a model is an indecomposable ordinal (of course it is much more,
since it is also closed under ordinal multiplication, exponentiation and many
more operations). We will need the following result on indecomposable
ordinals, see Chapter 9 of the book \cite{KomjathBook} for a proof.

\begin{proposition}
Let $\delta$ be an indecomposable ordinal. If we split $\delta$ in finitely
many parts, then one part is isomorphic (as a linear order) to $\delta.$
\label{Indescomp}
\end{proposition}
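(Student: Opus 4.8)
The plan is to reduce the statement to the case of a partition into two pieces, and then to exploit the standard structural description of indecomposable ordinals as the ordinal powers of $\omega$, namely that the indecomposable ordinals are exactly those of the form $\omega^{\xi}$.

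First I would dispose of the general case by induction on the number $n$ of pieces. For $n=1$ there is nothing to prove. Given a partition $\delta = P_{1}\cup\dots\cup P_{n}$, I would apply the two-piece case to the coarser partition $\delta = P_{1}\cup(P_{2}\cup\dots\cup P_{n})$. If $P_{1}$ is isomorphic to $\delta$ we are done; otherwise the set $R=P_{2}\cup\dots\cup P_{n}$, with the order inherited from $\delta$, has order type $\delta$ and is therefore itself an indecomposable ordinal, now partitioned into the $n-1$ pieces $P_{2},\dots,P_{n}$. The induction hypothesis then produces a piece of order type $\delta$, which is the same whether computed inside $R$ or inside $\delta$. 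Thus everything comes down to the two-piece case.

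For the two-piece case I would write $\delta=\omega^{\xi}$ and argue by induction on $\xi$. Fix a decomposition of $\delta$ into consecutive blocks $I_{k}$ ($k<\omega$), each of indecomposable order type $\omega^{\eta_{k}}$, with the exponents $\eta_{k}$ constantly equal to $\eta$ when $\xi=\eta+1$ and strictly increasing to $\xi$ when $\xi$ is a limit; in both cases $\sum_{k<\omega}\omega^{\eta_{k}}=\omega^{\xi}=\delta$. Given a partition $\delta=A\sqcup B$, each block splits as $I_{k}=(A\cap I_{k})\sqcup(B\cap I_{k})$, and since $I_{k}\cong\omega^{\eta_{k}}$ is indecomposable with $\eta_{k}<\xi$, the induction hypothesis gives that one of the two traces has full order type $\omega^{\eta_{k}}$. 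By the pigeonhole principle there is an infinite set $S$ of indices for which the same side, say $A$, realizes the full type $\omega^{\eta_{k}}$ on every $I_{k}$ with $k\in S$. Then $\bigcup_{k\in S}(A\cap I_{k})$ is a concatenation, in increasing block order, of suborders of types $\omega^{\eta_{k}}$ ($k\in S$), so its order type is $\sum_{k\in S}\omega^{\eta_{k}}=\omega^{\xi}=\delta$, where the point is that the exponents occurring for $k\in S$ are still cofinal in $\xi$. Hence $A$ contains a subset of order type $\delta$, and since every subset of $\delta$ has order type at most $\delta$, the piece $A$ is isomorphic to $\delta$.

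The only genuine obstacle is this two-piece case, which is the ordinal-arithmetic heart of the matter: an indecomposable ordinal cannot be written as the union of two sets of strictly smaller order type. The cleanest way to package it is the inequality $\mathrm{otp}(A\sqcup B)\le\mathrm{otp}(A)\oplus\mathrm{otp}(B)$ for the natural (Hessenberg) sum, together with the observation that $\omega^{\xi}$ is closed under $\oplus$; the block induction above is just a hands-on way of establishing the same fact while avoiding the machinery of natural sums. The reduction to two pieces and all the order-type bookkeeping are routine.
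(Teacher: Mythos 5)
Your proof is correct, but there is nothing in the paper to compare it against: the authors do not prove Proposition \ref{Indescomp} at all, they simply cite Chapter 9 of Komj\'ath--Totik, so your argument supplies a proof the paper leaves to the literature. What you give is the standard one, and all the steps check out: the reduction from $n$ pieces to two is sound (if $P_{1}$ does not have type $\delta$ then the complement does, and order types computed in the suborder $R$ agree with those computed in $\delta$, so the induction on $n$ goes through); in the two-piece case the block decomposition $\delta=\sum_{k<\omega}\omega^{\eta_{k}}$ exists in both the successor and limit cases, the inductive hypothesis applies to each block since $\eta_{k}<\xi$, the pigeonhole step is valid, and your parenthetical observation that an infinite subset $S$ of indices still yields exponents cofinal in $\xi$ (so that $\sum_{k\in S}\omega^{\eta_{k}}=\omega^{\xi}$ by absorption of smaller summands) is exactly the point that makes the limit case work; finally, a subset of $A$ of type $\delta$ together with $\mathrm{otp}(A)\le\delta$ forces $\mathrm{otp}(A)=\delta$. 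Your closing remark that the whole two-piece case is the inequality $\mathrm{otp}(A\sqcup B)\le\mathrm{otp}(A)\oplus\mathrm{otp}(B)$ plus closure of $\omega^{\xi}$ under the Hessenberg sum is also right, and is the slicker packaging one finds in the references. The only (trivial) omission is the base of the induction on $\xi$: your successor/limit dichotomy does not cover $\xi=0$, i.e.\ $\delta=1$ (and the degenerate $\delta=0$, which the paper's definition technically admits), where the statement holds vacuously since some singleton or empty piece already has type $\delta$; one sentence fixes this.
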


\qquad\qquad\qquad\ \ \ \ 

A key property of countable elementary submodels is that for every $M\in$
\textsf{Sub}$\left(  \kappa\right)  $ and $A\in M,$ if $A$ is countable, then
$A\subseteq M.$ This property will be used implicitly throughout the paper. If
$B$ is a set of ordinals, by $\overline{B}$ we denote its closure (in the
usual topology of the ordinals) and \textsf{OT}$\left(  B\right)  $ denotes
its order type. The following result is very well-known, we include a proof
just for completeness.

\begin{lemma}
If $B$ is a countable set of ordinals, then $\overline{B}$ is also countable.
\end{lemma}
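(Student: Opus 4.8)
The plan is to exploit the well-ordering of the ordinals, which is exactly what makes the statement true (it fails for countable subsets of $\mathbb{R}$, such as $\mathbb{Q}\cap[0,1]$). I would first recall the description of the closure: since $[0,\sup B]$ is closed in the order topology, we have $\overline{B}\subseteq[0,\sup B]$, and an ordinal $\gamma$ lies in $\overline{B}$ precisely when $\gamma\in B$ or $\gamma$ is a limit ordinal with $B\cap\gamma$ unbounded in $\gamma$, so that $\gamma=\sup(B\cap\gamma)$. Writing $L=\overline{B}\setminus B$ for the set of such limit points, it suffices to show that $L$ is countable, since $B$ is countable.

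The key step is to assign to each $\gamma\in L$ the set $B\cap\gamma=\{b\in B:b<\gamma\}$ (the two conditions $b<\gamma$ and $b\le\gamma$ agreeing since $\gamma\notin B$), and to observe that this is an initial segment of the linearly ordered set $(B,<)$. I would then check that the map $\gamma\mapsto B\cap\gamma$ is injective on $L$: if $\gamma_{1}<\gamma_{2}$ are both in $L$, then because $B\cap\gamma_{2}$ is unbounded in $\gamma_{2}$ there is some $b\in B$ with $\gamma_{1}<b<\gamma_{2}$, and this $b$ witnesses $B\cap\gamma_{1}\neq B\cap\gamma_{2}$. This is the one place where the limit-point (cofinality) property of the elements of $L$ is genuinely used.

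To finish, I would count initial segments. Since $B$ is countable its order type $\alpha=\mathsf{OT}(B)$ is a countable ordinal, and the initial segments of a well-order of type $\alpha$ are indexed by $\alpha+1$, so there are only countably many of them. The injection from the previous paragraph therefore gives $\left\vert L\right\vert \leq\left\vert \alpha+1\right\vert \leq\aleph_{0}$, whence $\overline{B}=B\cup L$ is countable.

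The main obstacle is conceptual rather than computational: one must route the argument through the well-ordering, packaging the limit points as initial segments of $(B,<)$ so that well-foundedness forces there to be only countably many. The only minor points to keep straight are the endpoint $\sup B$ (if it is not in $B$ it belongs to $L$ and maps to the improper initial segment $B$ itself, which is harmless) and the verification that $B\cap\gamma$ really is an initial segment, both of which are immediate.
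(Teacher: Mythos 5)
Your proof is correct and is essentially the paper's own argument: the paper takes the isomorphism $e:\gamma\longrightarrow B$ with $\gamma=$ \textsf{OT}$\left(  B\right)  $ and observes that $\overline{B}$ consists of the ordinals $\bigcup e\left[  \alpha\right]  $ for $\alpha\leq\gamma$, i.e., every point of the closure is the supremum of an initial segment of $B$, which is exactly the correspondence you set up (you phrase it as an injection $\gamma\mapsto B\cap\gamma$ on $\overline{B}\setminus B$, the paper as a surjection $\alpha\mapsto\bigcup e\left[  \alpha\right]  $ from $\gamma+1$). Your write-up is a bit more detailed about why the map is injective, but the mechanism, counting initial segments of a countable well-order, is identical.
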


\begin{proof}
Let $\gamma=$ \textsf{OT}$\left(  B\right)  $ (note that it is smaller than
$\omega_{1}$) and $e:\gamma\longrightarrow B$ the (unique) isomorphism. For
$\alpha\leq\gamma,$ denote $\beta_{\alpha}=\bigcup e\left[  \alpha\right]  .$
It follows that $\overline{B}=\left\{  \beta_{\alpha}\mid\alpha\leq
\gamma\right\}  .$
\end{proof}

\qquad\qquad\ \ \ 

In this way, if $M\in$ \textsf{Sub}$\left(  \kappa\right)  $ and $B\in M$ is a
countable set of ordinals, then $\overline{B}$ is contained in $M.$ The
following result is folklore (a proof can be found in
\cite{UltrafiltersinRandomModel}).

\begin{lemma}
[\textsf{CH}]Let $M,N$ be two elements of \textsf{Sub}$\left(  \kappa\right)
.$ If $\delta_{M}\leq\delta_{N},$ then \textsf{H}$\left(  \omega_{1}\right)
\cap M\subseteq$ \textsf{H}$\left(  \omega_{1}\right)  \cap N.$
\label{Lema contencion Hw1}
\end{lemma}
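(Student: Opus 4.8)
The plan is to exploit \textsf{CH} to fix a single enumeration of $\textsf{H}(\omega_1)$ that is visible to both submodels, and then transfer membership through the ordinal heights $\delta_M$ and $\delta_N$. The crucial feature of the setup is that the structure $(\textsf{H}(\kappa),\in,\trianglelefteq)$ carries a fixed well order, so any object definable without parameters automatically belongs to \emph{every} elementary submodel; this is what will let $M$ and $N$ share an enumeration.

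First I would observe that, under \textsf{CH}, $\left\vert \textsf{H}(\omega_1)\right\vert =\aleph_1$, so a bijection $\omega_1\longrightarrow \textsf{H}(\omega_1)$ exists. Working inside $(\textsf{H}(\kappa),\in,\trianglelefteq)$, let $f$ be the $\trianglelefteq$-least such bijection. Since $\omega_1$, $\textsf{H}(\omega_1)$ and $\trianglelefteq$ are all definable, $f$ is definable without parameters, and hence elementarity gives $f\in M$ and $f\in N$ (and likewise $\omega_1\in M\cap N$). Thus a common coordinate system for $\textsf{H}(\omega_1)$ is available in both models.

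Next, take any $x\in \textsf{H}(\omega_1)\cap M$ and set $\xi=f^{-1}(x)<\omega_1$. Because $x\in M$ and $f\in M$, the ordinal $\xi$ is definable from parameters lying in $M$, so $\xi\in M$; being a countable ordinal it then satisfies $\xi\in M\cap\omega_1=\delta_M$. The hypothesis $\delta_M\leq\delta_N$ gives $\xi<\delta_N$, so $\xi\in N\cap\omega_1\subseteq N$ (recall that $N\cap\omega_1$ is genuinely an ordinal, since each of its members is a countable ordinal and hence a subset of $N$). Finally, from $f\in N$ and $\xi\in N$ we conclude $x=f(\xi)\in N$, and as $x\in \textsf{H}(\omega_1)$ this yields $x\in \textsf{H}(\omega_1)\cap N$, which is the desired inclusion.

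The only genuinely delicate point is the very first step, namely securing one enumeration shared by both models; this is precisely where \textsf{CH} and the added well order $\trianglelefteq$ earn their keep. \textsf{CH} is what makes $\textsf{H}(\omega_1)$ have size $\aleph_1$, so that an enumeration of length $\omega_1$ exists at all, while $\trianglelefteq$ canonicalizes the choice of enumeration so that the very same $f$ is an element of every elementary submodel. Once $f$ is common to $M$ and $N$, everything else is a routine transfer of membership along the heights, and I do not expect any obstruction there.
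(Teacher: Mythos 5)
Your proof is correct and coincides with the standard folklore argument: under \textsf{CH} fix the $\trianglelefteq$-least bijection $f:\omega_{1}\longrightarrow \textsf{H}\left(\omega_{1}\right)$, which is definable without parameters in $(\textsf{H}\left(\kappa\right),\in,\trianglelefteq)$ and hence lies in every element of \textsf{Sub}$\left(\kappa\right)$, and then observe that $\textsf{H}\left(\omega_{1}\right)\cap M=f\left[\delta_{M}\right]\subseteq f\left[\delta_{N}\right]=\textsf{H}\left(\omega_{1}\right)\cap N$. The paper does not include a proof of this lemma, referring instead to \cite{UltrafiltersinRandomModel}, and your argument is precisely the intended one, including the correct identification of where \textsf{CH} (making $\left\vert\textsf{H}\left(\omega_{1}\right)\right\vert=\aleph_{1}$) and the fixed well order $\trianglelefteq$ (canonicalizing $f$ so both models contain it) are used.
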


\qquad\qquad\qquad\ \ \qquad\ \ 

A set $C\subseteq\omega_{1}$ is called a \emph{club }if it is closed and
unbounded. We will say that $\left\{  M_{\alpha}\mid\alpha\in\omega
_{1}\right\}  \subseteq$ \textsf{Sub}$\left(  \kappa\right)  $ is a
\emph{continuous increasing chain }if the following conditions hold:

\begin{enumerate}
\item $M_{\alpha}\in M_{\beta}$ whenever $\alpha<\beta.$

\item $M_{\alpha}=\bigcup\limits_{\xi<\alpha}M_{\xi}$ in case $\alpha$ is limit.
\end{enumerate}

\qquad\ \qquad\ \ \ \ \qquad\qquad\qquad\ \ \qquad\qquad\ \ 

It is easy to see that if $\left\{  M_{\alpha}\mid\alpha\in\omega_{1}\right\}
\subseteq$ \textsf{Sub}$\left(  \kappa\right)  $ is a continuous increasing
chain, then $\left\{  \delta_{M_{\alpha}}\mid\alpha<\omega_{1}\right\}  $ is a
club. Moreover, every club on $\omega_{1}$ contains one of this form.

\section{Review on Cohen and random forcing}

We adopt Kunen's presentation of Cohen and random forcing from
\cite{KunenCohenyRandom}, which enables us to treat both forcing notions
uniformly. Proofs of the results mentioned in this section can be found in
that paper. For any set $I$, we endow the product space $2^{I}$ with its
standard product topology. Recall that $B\subseteq$ $2^{I}$ is \emph{Baire} if
it belongs to the smallest $\sigma$-algebra that contains all clopen sets.
\textsf{Baire}$\left(  2^{I}\right)  $ is the collection of all the Baire
subsets of $2^{I}.$ By $\mathcal{M}_{I}$ we denote the $\sigma$-ideal of
meager sets in $2^{I}$ and $\mathcal{N}_{I}$ is the $\sigma$-ideal of null
sets in $2^{I}$ (where $2^{I}$ has the standard product measure). It is not
hard to see that both ideals have a Baire base ($F_{\sigma}$ for
$\mathcal{M}_{I}$ and $G_{\delta}$ for $\mathcal{N}_{I}$). By $\mathbb{C}%
\left(  I\right)  $ (\emph{Cohen forcing on} $I$) we denote the quotient
\textsf{Baire}$\left(  2^{I}\right)  \diagup\mathcal{M}_{I},$ while
$\mathbb{B}\left(  I\right)  $ (\emph{random forcing on} $I$) is
\textsf{Baire}$\left(  2^{I}\right)  \diagup\mathcal{N}_{I}.$ We order
$\mathbb{C}\left(  I\right)  $ ($\mathbb{B}\left(  I\right)  $) by inclusion:
$A\leq B$ if and only if $A\subseteq B$. Note that as defined, $\mathbb{C}%
\left(  I\right)  $ and $\mathbb{B}\left(  I\right)  $ are not separative
partial orders. We could also use the quotients instead. This choice does not
significantly affect the content of the paper.

\qquad\ \ \ \ \qquad\ \ \ \ \ \ \ \ 

For $I,J$ two infinite sets and $\triangle:I\longrightarrow J$ an injective
function, define $\triangle^{\prime}:2^{J}\longrightarrow2^{I}$ by
$\triangle^{\prime}\left(  f\right)  =f\circ\triangle$ (for $f\in2^{J}).$
\ Furthermore, we also define $\triangle_{\ast}:\mathcal{P}\left(
2^{I}\right)  \longrightarrow\mathcal{P}\left(  2^{J}\right)  $ as
$\triangle_{\ast}\left(  A\right)  =\left\{  f\in2^{J}\mid f\circ\triangle\in
A\right\}  .$ It is easy to see that $\triangle_{\ast}\left(  A\right)
=\left(  \triangle^{\prime}\right)  ^{-1}\left(  A\right)  $.

\begin{lemma}
Let $I,J$ two infinite sets and $\triangle:I\longrightarrow J$ a bijective
function. The function $\triangle^{\prime}$ is a homeomorphism.
\end{lemma}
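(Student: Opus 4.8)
The plan is to verify the three defining properties of a homeomorphism directly: bijectivity, continuity, and continuity of the inverse. Since $\triangle$ is a bijection, it has an inverse $\triangle^{-1}:J\longrightarrow I$, which is again a bijection, so the construction of the previous paragraph also produces $(\triangle^{-1})^{\prime}:2^{I}\longrightarrow2^{J}$ given by $(\triangle^{-1})^{\prime}(g)=g\circ\triangle^{-1}$. My first step would be to show that $(\triangle^{-1})^{\prime}$ is a two-sided inverse of $\triangle^{\prime}$. This is a one-line associativity computation: for $f\in2^{J}$ we have $(\triangle^{-1})^{\prime}(\triangle^{\prime}(f))=(f\circ\triangle)\circ\triangle^{-1}=f\circ(\triangle\circ\triangle^{-1})=f$, and symmetrically $\triangle^{\prime}((\triangle^{-1})^{\prime}(g))=g$ for every $g\in2^{I}$. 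Hence $\triangle^{\prime}$ is a bijection.

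Next I would check continuity. Recall that the product topology on $2^{I}$ has as a subbasis the cylinders $U_{i,\varepsilon}=\{g\in2^{I}\mid g(i)=\varepsilon\}$ for $i\in I$ and $\varepsilon\in2$, and likewise for $2^{J}$. The key computation is that the preimage of a subbasic set is again subbasic: for $f\in2^{J}$ one has $\triangle^{\prime}(f)\in U_{i,\varepsilon}$ if and only if $(f\circ\triangle)(i)=\varepsilon$, that is, if and only if $f(\triangle(i))=\varepsilon$. Thus $(\triangle^{\prime})^{-1}(U_{i,\varepsilon})=\{f\in2^{J}\mid f(\triangle(i))=\varepsilon\}$, which is exactly the subbasic cylinder of $2^{J}$ indexed by $\triangle(i)$. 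Since preimages of subbasic opens are open, $\triangle^{\prime}$ is continuous.

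To finish, I see two equally short options. One may simply run the continuity argument verbatim for $(\triangle^{-1})^{\prime}$, using the bijection $\triangle^{-1}$ in place of $\triangle$, to conclude that the inverse of $\triangle^{\prime}$ is continuous as well. Alternatively, since each $2^{K}$ is compact (by Tychonoff) and Hausdorff, any continuous bijection between two such spaces is automatically a homeomorphism, so the first two steps already suffice. I do not anticipate any genuine obstacle here: the only point requiring care is keeping track of the direction of composition, so that the index of the preimage cylinder comes out as $\triangle(i)$ rather than $\triangle^{-1}(i)$. Everything else is purely formal.
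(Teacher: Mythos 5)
Your proof is correct and complete: the associativity computation shows $(\triangle^{-1})^{\prime}$ is a two-sided inverse, the cylinder computation $(\triangle^{\prime})^{-1}(U_{i,\varepsilon})=\{f\in2^{J}\mid f(\triangle(i))=\varepsilon\}$ establishes continuity, and either of your two closing options (symmetry, or compact-to-Hausdorff continuous bijection) legitimately finishes the argument. The paper itself states this lemma without proof, deferring to Kunen's presentation in the cited handbook article, and what you have written is exactly the standard verification one would find there, so there is nothing to compare against; the only remark worth adding is that your subbasis computation shows continuity of $\triangle^{\prime}$ holds for an arbitrary function $\triangle$ (bijectivity is needed only for invertibility), which is consistent with how the paper later uses the companion map $\triangle_{\ast}$ for merely injective $\triangle$.
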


\qquad\ \qquad\ \ \ 

We also have the following:

\begin{lemma}
Let $I,J$ and $K$ be infinite sets and $\mathbb{P}$ be either $\mathbb{C}$ or
$\mathbb{B}.$

\begin{enumerate}
\item If $\triangle:I\longrightarrow J$ is bijective, then $\triangle_{\ast
}:\mathbb{P}\left(  I\right)  \longrightarrow\mathbb{P}\left(  J\right)  .$

\item If $K\subseteq I,$ then $\mathbb{P}\left(  K\right)  $ is isomorphic to
a regular suborder of $\mathbb{P}\left(  I\right)  $ (for convenience, we view
them as real suborders).

\item Let $\dot{a}$ be a $\mathbb{P}\left(  I\right)  $ name for a subset of
$\omega.$ There is \ $L\in\left[  I\right]  ^{\omega}$ such that $\dot{a}$ is
a $\mathbb{P}\left(  L\right)  $ name.
\end{enumerate}
\end{lemma}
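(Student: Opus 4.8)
The plan is to treat all three parts uniformly by writing $\mathcal{J}_I$ for the relevant $\sigma$-ideal on $2^I$, namely $\mathcal{M}_I$ when $\mathbb{P}=\mathbb{C}$ and $\mathcal{N}_I$ when $\mathbb{P}=\mathbb{B}$, so that $\mathbb{P}(I)=\textsf{Baire}(2^I)\diagup\mathcal{J}_I$ in both cases. The three assertions then reduce to verifying, in each case, that the natural set-theoretic operation respects both the Baire $\sigma$-algebra and the ideal $\mathcal{J}$, and (for the second item) that the resulting embedding is complete.

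For the first part, recall from the previous lemma that when $\triangle$ is a bijection the induced map $\triangle^{\prime}:2^{J}\longrightarrow 2^{I}$ is a homeomorphism, and that $\triangle_{\ast}(A)=(\triangle^{\prime})^{-1}(A)$. Since $\triangle^{\prime}$ and its inverse are homeomorphisms, they carry Baire sets to Baire sets, so it remains only to check that $\triangle_{\ast}$ preserves the ideal. For Cohen forcing this is automatic, as any homeomorphism preserves the meager ideal. For random forcing the point is that $\triangle^{\prime}$ merely relabels coordinates and hence preserves the standard product measure, so it sends null sets to null sets. Consequently $\triangle_{\ast}$ descends to a well-defined Boolean map on the quotients, and $(\triangle^{-1})_{\ast}$ provides an inverse, exhibiting an isomorphism $\mathbb{P}(I)\longrightarrow\mathbb{P}(J)$.

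For the second part, I would use the restriction map $\pi:2^{I}\longrightarrow 2^{K}$ given by $\pi(f)=f\restriction K$ and set $e([B])=[\pi^{-1}(B)]$ for $B\in\textsf{Baire}(2^{K})$. That $\pi^{-1}$ sends Baire sets to Baire sets is clear, and that it respects $\mathcal{J}$ (a meager, respectively null, subset of $2^{K}$ has meager, respectively null, preimage in $2^{I}=2^{K}\times 2^{I\setminus K}$) follows from the product and Fubini structure of $\mathcal{M}$ and $\mathcal{N}$; thus $e$ is a well-defined injective Boolean homomorphism. The substantive point, which I expect to be the main obstacle, is that $e$ is a \emph{regular} embedding, i.e. that maximal antichains of $\mathbb{P}(K)$ stay maximal in $\mathbb{P}(I)$. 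The cleanest route is the product factorization: using $2^{I}=2^{K}\times 2^{I\setminus K}$ together with the corresponding factorizations of category and measure, forcing with $\mathbb{P}(I)$ is the product forcing $\mathbb{P}(K)\times\mathbb{P}(I\setminus K)$, and projection onto the first coordinate realizes $e$, so completeness is inherited from the general theory of product forcing.

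For the third part, the crucial combinatorial fact is that every Baire subset of $2^{I}$ depends on only countably many coordinates: the clopen sets each depend on finitely many coordinates, and the family of sets depending on countably many coordinates is a $\sigma$-algebra containing all clopen sets, hence contains all of $\textsf{Baire}(2^{I})$. I would then represent $\dot{a}$ by its Boolean values $b_{n}=\left\Vert n\in\dot{a}\right\Vert \in\mathbb{P}(I)$, choose a Baire representative $A_{n}$ for each $b_{n}$, and let $L_{n}\in[I]^{\omega}$ be the coordinates on which $A_{n}$ depends. Setting $L=\bigcup_{n}L_{n}\in[I]^{\omega}$, every $A_{n}$ is (mod $\mathcal{J}_{I}$) the pullback of a Baire subset of $2^{L}$, so by the second part each $b_{n}$ lies in the regular suborder $\mathbb{P}(L)$; reading these values back as the Boolean values of a $\mathbb{P}(L)$-name exhibits $\dot{a}$ as a $\mathbb{P}(L)$-name, as required.
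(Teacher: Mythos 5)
Your parts (1) and (3) are correct and follow the standard route; note that the paper itself does not prove this lemma at all, but defers to Kunen's \emph{Random and Cohen reals} \cite{KunenCohenyRandom}, where essentially your arguments appear: a bijection of index sets induces a homeomorphism of the product spaces which preserves category automatically and preserves the product measure because it merely relabels coordinates, and every Baire subset of $2^{I}$ depends on countably many coordinates, so the union of supports of the Boolean values $\left\Vert n\in\dot{a}\right\Vert $ gives the countable $L$.

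The genuine gap is in your part (2), in the random case. The claim that forcing with $\mathbb{P}\left(  I\right)  $ \emph{is} the product forcing $\mathbb{P}\left(  K\right)  \times\mathbb{P}\left(  I\setminus K\right)  $ is fine for Cohen forcing, since $\mathbb{C}\left(  I\right)  $ is forcing equivalent to the finite partial function poset $\mathrm{Fn}\left(  I,2\right)  =\mathrm{Fn}\left(  K,2\right)  \times\mathrm{Fn}\left(  I\setminus K,2\right)  $, but it is false for random forcing: the measure algebra of the product measure on $2^{K}\times2^{I\setminus K}$ is not the forcing product of the two measure algebras. Indeed, it is a classical fact (see Bartoszy\'{n}ski--Judah, \emph{Set Theory: On the Structure of the Real Line}) that the product of two random forcings adds a Cohen real, whereas every measure algebra, in particular $\mathbb{B}\left(  I\right)  ,$ is $\omega^{\omega}$-bounding and adds no Cohen reals; hence $\mathbb{B}\left(  I\right)  $ and $\mathbb{B}\left(  K\right)  \times\mathbb{B}\left(  I\setminus K\right)  $ are not even forcing equivalent, and "the general theory of product forcing" cannot be invoked. (What is true is an \emph{iteration} factorization, with the second factor computed in the intermediate extension, but that is not what you used.) The repair is short and needs no factorization at all: your map $e\left(  \left[  B\right]  \right)  =\left[  \pi^{-1}\left(  B\right)  \right]  $ is injective by the Fubini/Kuratowski--Ulam argument you already gave, and it preserves countable suprema, because in both quotients the supremum of countably many classes $\left[  B_{n}\right]  $ is $\left[  \bigcup_{n}B_{n}\right]  $ and $\pi^{-1}$ commutes with unions. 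Since both algebras are ccc, a maximal antichain of $\mathbb{P}\left(  K\right)  $ is countable, say $\left\{  \left[  B_{n}\right]  \mid n\in\omega\right\}  $ with $\sup_{n}\left[  B_{n}\right]  =1;$ then $\sup_{n}e\left(  \left[  B_{n}\right]  \right)  =e\left(  1\right)  =1,$ so the image is still a maximal antichain in $\mathbb{P}\left(  I\right)  .$ That is exactly the regularity required, and with this substitution your proof of part (2), and hence of the whole lemma, goes through.
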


\qquad\ \ \ \ \ 

If $\dot{a}$ and $L$ are as in the third point, we say that $L$\emph{ is a
support of }$\dot{a}.$ Let $\mathbb{P}$ be a partial order. By $V^{\mathbb{P}%
}$ we denote the class of all $\mathbb{P}$-names (see \cite{oldKunen}). An
automorphism $F:\mathbb{P\longrightarrow P}$ can be extended to a permutation
of $V^{\mathbb{P}},$ which we also denote by $F$ (see \cite{Jech} or
\cite{JechAC} for details). An important fact of isomorphisms is the following:

\begin{proposition}
Let $\mathbb{P}$ be a partial order and $F:\mathbb{P\longrightarrow P}$ an
automorphism. For every $p\in\mathbb{P},$ $\varphi$ a formula, $\dot{a}%
_{0},...,\dot{a}_{n}\in V^{\mathbb{P}}$ and sets $b_{0},...,b_{m},$ the
following are equivalent: \label{Prop formula isomorfismo}

\begin{enumerate}
\item $p\Vdash$\textquotedblleft$\varphi(\dot{a}_{0},...,\dot{a}_{n}%
,b_{0},...,b_{m})$\textquotedblright.

\item $F\left(  p\right)  \Vdash$\textquotedblleft$\varphi(F\left(  \dot
{a}_{0}\right)  ,...,F\left(  \dot{a}_{n}\right)  ,b_{0},...,b_{m}%
)$\textquotedblright.\ 
\end{enumerate}
\end{proposition}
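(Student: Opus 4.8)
The plan is to prove the equivalence by the standard \emph{symmetry argument} for automorphisms of forcing notions. First I would recall how $F$ acts on names: by transfinite recursion on rank one sets $F(\dot{a})=\{(F(\dot{b}),F(q))\mid(\dot{b},q)\in\dot{a}\}$, so that $F$ becomes a bijection of the class $V^{\mathbb{P}}$ onto itself, with inverse given by the analogous extension of $F^{-1}$. The two structural facts I would record at the outset are that $F$ preserves the order, the incompatibility relation, and the top element of $\mathbb{P}$ (being an order-automorphism), and that $F$ fixes every check-name: an easy induction on rank gives $F(\check{b})=\{(F(\check{c}),F(1_{\mathbb{P}}))\mid c\in b\}=\{(\check{c},1_{\mathbb{P}})\mid c\in b\}=\check{b}$ for every set $b$. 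This last point is precisely why the parameters $b_0,\dots,b_m$ appear unchanged in both (1) and (2): they enter the formula through their canonical names, which $F$ leaves fixed.

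With this in place I would argue by induction on the complexity of $\varphi$, following the recursive definition of the forcing relation. For the atomic formulas $\dot{a}=\dot{b}$ and $\dot{a}\in\dot{b}$ one runs the usual simultaneous transfinite recursion on the ranks of the names: the clause defining $p\Vdash\dot{a}\in\dot{b}$ refers only to the pairs $(\dot{c},q)\in\dot{b}$, to the order relation, and to forcing of equalities of lower rank, all of which are transported faithfully by $F$ because $F$ permutes the pairs of $\dot{b}$ according to $F(\dot{b})$ and respects $\leq$. The connective cases are immediate: for $\neg\psi$ one uses that $q\leq p$ iff $F(q)\leq F(p)$ together with the induction hypothesis, and the $\wedge$ case is direct. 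For the quantifier $\exists x\,\psi$, the forcing clause asserts that the set of conditions forcing $\psi$ for \emph{some} witnessing name is dense below $p$; since $F$ carries sets dense below $p$ to sets dense below $F(p)$ and is a bijection of the class of all names, the quantifier over names is preserved. This yields the implication (1) $\Rightarrow$ (2).

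The reverse implication then requires no new work: applying what has just been proved to the automorphism $F^{-1}$, to the condition $F(p)$, and to the names $F(\dot{a}_0),\dots,F(\dot{a}_n)$, and using $F^{-1}(F(\dot{a}_i))=\dot{a}_i$ and $F^{-1}(F(p))=p$, converts (2) into (1). I expect the only genuinely delicate point to be the atomic case, where one must check that the simultaneous recursion defining $\Vdash$ for $=$ and $\in$ is respected by $F$ at every rank; once that bookkeeping is arranged, the propositional and quantifier steps are formal consequences of $F$ being an order-automorphism and a bijection of $V^{\mathbb{P}}$. Alternatively, the same conclusion can be reached semantically: prove by induction on rank the evaluation identity $F(\dot{a})^{H}=\dot{a}^{F^{-1}[H]}$ for every generic $H$, observe that $V[H]=V[F^{-1}[H]]$ since $F\in V$, and then read off the statement from the Forcing Theorem.
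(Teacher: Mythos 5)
Your argument is correct and matches the standard treatment: the paper itself offers no proof of this proposition, referring the reader to Jech's books, and the proof given there is precisely your symmetry argument --- extend $F$ to $V^{\mathbb{P}}$ by recursion on names, note $F(\check{b})=\check{b}$ (which, as you rightly isolate, is why the parameters $b_0,\dots,b_m$ pass through unchanged), establish the atomic case by simultaneous recursion on rank, run the induction on formula complexity using that $F$ preserves $\leq$, incompatibility, density below a condition, and is a bijection of names, and obtain the converse by applying the result to $F^{-1}$. The only point worth a parenthetical remark is your use of $F(1_{\mathbb{P}})=1_{\mathbb{P}}$ in the check-name computation, which is harmless here since the forcings in question are quotient Boolean algebras and in general one can pass to the Boolean completion; with that noted, your proof is complete and agrees with the cited one.
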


\qquad\qquad\ \ \ 

Regarding automorphisms of Cohen or random forcing, we will need the following:

\begin{lemma}
Let $I$ be an infinite set, $\triangle,\sigma:I\longrightarrow I$ bijections,
$\dot{a}$ a $\mathbb{C}\left(  I\right)  $ name for a subset of $\omega$ and
$J\in\left[  I\right]  ^{\omega}$ a support of $\dot{a}.$ If $\triangle
\upharpoonright J=\sigma\upharpoonright J,$ then $\triangle_{\ast}(\dot
{a})=\sigma_{\ast}(\dot{a}).$ \label{automorfismo coinciden}
\end{lemma}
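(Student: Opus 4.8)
The plan is to reduce the claim to the statement that the two automorphisms $\triangle_{\ast}$ and $\sigma_{\ast}$ of $\mathbb{C}(I)$ agree on every condition coming from the regular suborder $\mathbb{C}(J)$, and then to lift this agreement from conditions to names by an induction on the rank of the name.

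First I would recall the recursive definition of the action of an automorphism $F$ of a forcing $\mathbb{P}$ on $V^{\mathbb{P}}$, namely $F(\dot{x})=\{(F(\dot{y}),F(p))\mid(\dot{y},p)\in\dot{x}\}$, where $F$ acts on the conditions $p$ as the given automorphism. Since $J$ is a support of $\dot{a}$, the name $\dot{a}$ is a $\mathbb{C}(J)$-name; by the hereditary nature of this notion every condition $p$ occurring in $\dot{a}$, or recursively in any of its subnames $\dot{y}$, lies in $\mathbb{C}(J)$, and every such subname is again a $\mathbb{C}(J)$-name. Consequently, if I can show that $\triangle_{\ast}(p)=\sigma_{\ast}(p)$ for every $p\in\mathbb{C}(J)$, then a routine induction on the rank of the name, using the displayed recursion, yields $\triangle_{\ast}(\dot{a})=\sigma_{\ast}(\dot{a})$.

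The key step is therefore the agreement on conditions. Let $\pi_{J}:2^{I}\rightarrow2^{J}$ denote the restriction map $f\mapsto f\upharpoonright J$. Under the regular embedding of $\mathbb{C}(J)$ into $\mathbb{C}(I)$, a condition coming from $\mathbb{C}(J)$ has a representative modulo $\mathcal{M}_{I}$ of the form $\pi_{J}^{-1}(A)$ with $A\in\textsf{Baire}(2^{J})$. Recalling that $\triangle_{\ast}(B)=(\triangle^{\prime})^{-1}(B)$ and $\triangle^{\prime}(g)=g\circ\triangle$, I would compute
\[
\triangle_{\ast}(\pi_{J}^{-1}(A))=\{g\in2^{I}\mid(g\circ\triangle)\upharpoonright J\in A\}.
\]
For $j\in J$ we have $(g\circ\triangle)(j)=g(\triangle(j))=g(\sigma(j))=(g\circ\sigma)(j)$, using $\triangle\upharpoonright J=\sigma\upharpoonright J$; hence $(g\circ\triangle)\upharpoonright J=(g\circ\sigma)\upharpoonright J$ for every $g\in2^{I}$, so the displayed set equals $\sigma_{\ast}(\pi_{J}^{-1}(A))$ as a subset of $2^{I}$, and a fortiori as a condition (both classes being equal since $\triangle^{\prime}$ preserves $\mathcal{M}_{I}$). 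This gives $\triangle_{\ast}(p)=\sigma_{\ast}(p)$ for all $p\in\mathbb{C}(J)$.

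I expect the main obstacle to be bookkeeping rather than a deep idea: one must phrase correctly how conditions of the suborder $\mathbb{C}(J)$ embed into $\mathbb{C}(I)$, namely as preimages under $\pi_{J}$, and verify that the extension of an automorphism to $V^{\mathbb{P}}$ genuinely acts only through its values on the conditions occurring in a name, so that the induction goes through. Once the representation $\pi_{J}^{-1}(A)$ is secured, the computation showing agreement on conditions is immediate from $\triangle\upharpoonright J=\sigma\upharpoonright J$.
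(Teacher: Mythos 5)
Your proposal is correct, and in fact the paper contains no proof to compare it against: the lemma is stated in the review section on Cohen and random forcing, whose results are quoted from Kunen's presentation in \cite{KunenCohenyRandom} with proofs deferred to that paper. Your argument is precisely the standard verification that the citation hides, and it is complete. The two ingredients are handled correctly: (i) the identification of the regular suborder $\mathbb{C}(J)\leq\mathbb{C}(I)$ via representatives $\pi_{J}^{-1}(A)$ with $A\in\textsf{Baire}(2^{J})$, where your computation shows $\triangle_{\ast}(\pi_{J}^{-1}(A))$ and $\sigma_{\ast}(\pi_{J}^{-1}(A))$ are equal as literal subsets of $2^{I}$ (so the passage to classes modulo $\mathcal{M}_{I}$ is immediate; the remark that $\triangle^{\prime}$ is a homeomorphism is needed only for well-definedness of $\triangle_{\ast}$ on the quotient, not for this equality); and (ii) the rank induction through the name, which is legitimate because, under the paper's convention, ``$J$ is a support of $\dot{a}$'' means $\dot{a}$ \emph{is} a $\mathbb{C}(J)$-name, i.e.\ all conditions occurring hereditarily in $\dot{a}$ lie in the suborder, so the recursion $F(\dot{x})=\{(F(\dot{y}),F(p))\mid(\dot{y},p)\in\dot{x}\}$ together with agreement on $\mathbb{C}(J)$-conditions yields literal equality $\triangle_{\ast}(\dot{a})=\sigma_{\ast}(\dot{a})$, which is what the lemma asserts. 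One small bonus worth noting: your argument never uses anything specific to category, so replacing $\mathcal{M}_{I}$ by $\mathcal{N}_{I}$ gives the same lemma for $\mathbb{B}(I)$, consistent with the paper's uniform treatment of the two forcings.
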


\qquad\ \ \ \ \ \ 

The following is an important result attributed to Kunen. For a proof in the
case of Cohen forcing, see \cite{Kunen} and for the case of random forcing,
see \cite{MeasuresinducebbyForcingnames}.

\begin{theorem}
[Kunen]Assume \textsf{CH. }Let $\kappa$ be a regular uncountable cardinal.
$\mathbb{C}\left(  \kappa\right)  $ ($\mathbb{B}\left(  \kappa\right)  $)
forces that every tower has length $\omega_{1}.$ \label{KunenTorres}
\end{theorem}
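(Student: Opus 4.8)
The plan is to prove the only nontrivial half, namely the upper bound: in the extension no $\subseteq^{\ast}$-decreasing sequence of length $\omega_{2}$ is a tower, i.e. every such sequence has an infinite pseudointersection. The lower bound is automatic, since any countable $\subseteq^{\ast}$-decreasing sequence has a pseudointersection; and if $\kappa=\omega_{1}$ the statement is vacuous beyond $\omega_{1}$ because $\mathfrak{c}=\omega_{1}$ in the extension. So I assume $\kappa\geq\omega_{2}$ and argue for $\mathbb{P}=\mathbb{C}\left(\kappa\right)$, the case $\mathbb{B}\left(\kappa\right)$ being identical. Suppose toward a contradiction that some condition $r_{0}$ forces $\langle\dot{A}_{\alpha}\mid\alpha<\omega_{2}\rangle$ to be a tower. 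In $V$ assign to each $\dot{A}_{\alpha}$ a countable support $S_{\alpha}$. Using \textsf{CH} (so $\aleph_{1}^{\aleph_{0}}=\aleph_{1}<\aleph_{2}$), the $\Delta$-system lemma yields $X\in\left[\omega_{2}\right]^{\omega_{2}}$ with $\left\{S_{\alpha}\mid\alpha\in X\right\}$ a $\Delta$-system with root $R$; thinning further, every $S_{\alpha}\setminus R$ has a common order type $\eta$, and I fix order isomorphisms $\pi_{\alpha}:S_{\alpha}\setminus R\to\eta$. Transporting each $\dot{A}_{\alpha}$ along $\mathrm{id}_{R}\cup\pi_{\alpha}$ gives a nice name with support $R\cup\eta$; since \textsf{CH} bounds the number of such names by $\aleph_{1}$, a final pigeonhole gives $X^{\prime}\in\left[\omega_{2}\right]^{\omega_{2}}$ and a \emph{single} name $\dot{B}$ with $\dot{A}_{\alpha}=\left(\mathrm{id}_{R}\cup\pi_{\alpha}\right)_{\ast}^{-1}(\dot{B})$ for all $\alpha\in X^{\prime}$.

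The heart of the argument is an appeal to symmetry. Since $r_{0}$ has countable support $T_{0}$ and the blocks $S_{\gamma}\setminus R$ ($\gamma\in X^{\prime}$) are pairwise disjoint, $T_{0}$ meets only countably many of them, so I may fix $\alpha<\beta$ in $X^{\prime}$ with $\left(S_{\alpha}\setminus R\right)\cup\left(S_{\beta}\setminus R\right)$ disjoint from $T_{0}$. Let $\sigma$ be the bijection of $\kappa$ that swaps $S_{\alpha}\setminus R$ with $S_{\beta}\setminus R$ via $\pi_{\beta}^{-1}\circ\pi_{\alpha}$ and is the identity elsewhere. Then the induced automorphism $\sigma_{\ast}$ fixes $R$ and, by the coordinate-agreement principle of Lemma \ref{automorfismo coinciden}, fixes $r_{0}$ (which depends only on coordinates in $T_{0}$), while exchanging the two identical names: $\sigma_{\ast}(\dot{A}_{\alpha})=\dot{A}_{\beta}$ and $\sigma_{\ast}(\dot{A}_{\beta})=\dot{A}_{\alpha}$. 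As $r_{0}$ forces the sequence $\subseteq^{\ast}$-decreasing, $r_{0}\Vdash\dot{A}_{\beta}\subseteq^{\ast}\dot{A}_{\alpha}$; applying Proposition \ref{Prop formula isomorfismo} to $\sigma_{\ast}$ yields $r_{0}\Vdash\dot{A}_{\alpha}\subseteq^{\ast}\dot{A}_{\beta}$ as well, so $r_{0}$ forces $\dot{A}_{\alpha}$ and $\dot{A}_{\beta}$ to be equal modulo finite. This holds for every such pair, hence $r_{0}$ forces all but countably many of the $A_{\alpha}$ ($\alpha\in X^{\prime}$) to be pairwise equal modulo finite.

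It remains to extract a pseudointersection. For blocks $\alpha,\beta$ as above, $S_{\alpha}\cap S_{\beta}=R$, so $G\restriction S_{\alpha}$ and $G\restriction S_{\beta}$ are mutually generic over $W:=V\left[G\restriction R\right]$, and the standard product fact gives $V\left[G\restriction S_{\alpha}\right]\cap V\left[G\restriction S_{\beta}\right]=W$. Since $A_{\alpha}$ and $A_{\beta}$ agree modulo finite and their symmetric difference is a finite (hence ground-model) set, $A_{\alpha}\in V\left[G\restriction S_{\alpha}\right]\cap V\left[G\restriction S_{\beta}\right]=W$. Thus all these $A_{\alpha}$ equal, modulo finite, one infinite set $b\in W$. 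Because the admissible blocks are cofinal in $X^{\prime}$ and the tower is $\subseteq^{\ast}$-decreasing, $b\subseteq^{\ast}A_{\gamma}$ for every $\gamma<\omega_{2}$, so $b$ is an infinite pseudointersection, contradicting that $\langle\dot{A}_{\alpha}\rangle$ is a tower. For $\mathbb{B}\left(\kappa\right)$ the same proof applies verbatim, since Proposition \ref{Prop formula isomorfismo}, the $\triangle_{\ast}$-action, and the mutual-genericity (product) fact all hold for random forcing.

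The two steps I expect to be delicate are precisely where genericity and symmetry meet. First, one must arrange in $V$, through the $\Delta$-system and the \textsf{CH} name count, that cofinally many $A_{\alpha}$ are governed by a single name, so that one coordinate swap becomes an automorphism actually carrying $\dot{A}_{\alpha}$ to $\dot{A}_{\beta}$; matching the order types of the blocks and verifying that the transport respects supports (via Lemma \ref{automorfismo coinciden}) is the fiddly part here. Second, one must choose the pair of blocks disjoint from the support of $r_{0}$, which is what lets Proposition \ref{Prop formula isomorfismo} upgrade the one-sided relation $\dot{A}_{\beta}\subseteq^{\ast}\dot{A}_{\alpha}$ into genuine equality modulo finite; this is the crux, as it is the only point that converts the asymmetric tower condition into the symmetric statement the automorphism can act on. Everything after that—landing the common set in $W$ via mutual genericity and reading off the pseudointersection—is routine.
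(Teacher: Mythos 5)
You should first note that the paper contains no proof of Theorem \ref{KunenTorres}: it is quoted, with the Cohen case referred to \cite{Kunen} and the random case to \cite{MeasuresinducebbyForcingnames}. Measured against the classical argument, your proof is essentially Kunen's original isomorphism-of-names proof for the Cohen case: countable supports, a $\Delta$-system with root $R$ (the \textsf{CH} hypothesis $\aleph_{1}^{\aleph_{0}}=\aleph_{1}$ is exactly what the $\Delta$-system lemma at $\omega_{2}$ needs), a count of nice names over a fixed countable index set to make $\aleph_{2}$-many $\dot{A}_{\alpha}$ transports of a single name $\dot{B}$, and a block-swapping automorphism fixing $r_{0}$ that upgrades $\dot{A}_{\beta}\subseteq^{\ast}\dot{A}_{\alpha}$ to $\dot{A}_{\alpha}=^{\ast}\dot{A}_{\beta}$. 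All of this is sound, modulo one cosmetic repair: $\mathrm{id}_{R}\cup\pi_{\alpha}$ need not be injective when $R\cap\eta\neq\emptyset$, so transport the blocks to a fixed countable set of ordinals disjoint from $R$ (say an interval above $\sup R$), or skip the transport and count nice names over each possible order type directly. Also note that Lemma \ref{automorfismo coinciden} is stated in the paper only for $\mathbb{C}\left(I\right)$, though its proof goes through for $\mathbb{B}\left(I\right)$, so your appeal to it in the random case is legitimate but strictly beyond what is quoted.

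The one genuine misstep is your final ``mutual genericity'' paragraph, and fortunately it is superfluous. For random forcing the ``standard product fact'' does not apply as stated: $\mathbb{B}\left(S_{\alpha}\cup S_{\beta}\right)$ is the measure algebra of the \emph{product measure}, which is not (even densely) the product poset $\mathbb{B}\left(S_{\alpha}\setminus R\right)\times\mathbb{B}\left(S_{\beta}\setminus R\right)$ over $\mathbb{B}\left(R\right)$ --- unlike the Cohen case, where finite partial functions really do realize the union as a product --- so the restrictions $G\upharpoonright S_{\alpha}$ and $G\upharpoonright S_{\beta}$ are mutually random but not mutually \emph{generic} in the product sense. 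The intersection equality $V\left[G\upharpoonright S_{\alpha}\right]\cap V\left[G\upharpoonright S_{\beta}\right]=V\left[G\upharpoonright R\right]$ is in fact true for measure algebras, but it needs a Fubini-type argument specific to them, not the product fact; this difficulty is one reason the random case is usually handled by different means, as in \cite{MeasuresinducebbyForcingnames}. You can simply delete the paragraph: once $r_{0}$ forces $\dot{A}_{\alpha}=^{\ast}\dot{A}_{\beta}$ for every admissible pair $\alpha<\beta$ in $X^{\prime}$ (all but countably many members of $X^{\prime}$ are admissible), fix the least admissible $\alpha_{0}$; for any $\gamma<\omega_{2}$ pick an admissible $\alpha>\gamma$ and observe that $r_{0}\Vdash\dot{A}_{\alpha_{0}}=^{\ast}\dot{A}_{\alpha}\subseteq^{\ast}\dot{A}_{\gamma}$. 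Thus $r_{0}$ forces the infinite set $\dot{A}_{\alpha_{0}}$ to be a pseudointersection of the entire sequence, contradicting that it is forced to be a tower, with no appeal to intersections of intermediate models. With that one-line finish in place of your last paragraph, the proof is correct for both $\mathbb{C}\left(\kappa\right)$ and $\mathbb{B}\left(\kappa\right)$.
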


\qquad\qquad\ \ \ 

The theorem is particularly interesting for random forcing, since while Cohen
forcing preserves towers\footnote{We say that a forcing $\mathbb{P}$
\emph{preserves towers }if every tower from the ground model is still a tower
after forcing with $\mathbb{P}.$}, random forcing does not.

\section{Covering with nowhere dense P-sets under CH\textsf{\label{SeccionCH}%
}}

In this section we prove that \textsf{CH }implies that $\omega^{\ast}$ can not
be covered by nowhere dense \textsf{P}-sets. As mentioned before, this result
is not new and follows directly from Theorem \ref{KunenvanMill}. Moreover, our
proof is not really new, since the argument easily follows from the proof of
Theorem 4.1 of \cite{PFiltersCohenRandomLaver}. Nevertheless, we include it
here because it motivates some of the ideas we will develop in subsequent sections.

\begin{theorem}
[Kunen, van Mill, Mills]\textsf{CH }implies that $\omega^{\ast}$ can not be
covered by nowhere dense \textsf{P}-sets.
\end{theorem}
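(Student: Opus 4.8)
The plan is to exhibit, under CH, a single non-principal ultrafilter $\mathcal{U}$ on $\omega$ that meets every nowhere dense P-filter in its complement — equivalently (using the dictionary established in the introduction), an ultrafilter that intersects every tall P-ideal. By Proposition \ref{CH pideal contiene torre}, under CH every nowhere dense P-filter contains a tower, so it suffices to build an ultrafilter $\mathcal{U}$ such that for every tower $\mathcal{A}$, some member of $\mathcal{A}$ is \emph{not} in $\mathcal{U}$ (i.e. its complement lies in $\mathcal{U}$). Since a tower has no pseudointersection, what I really want is an ultrafilter that is not ``captured'' by any tower; the clean way to phrase the target is: the ultrafilter $\mathcal{U}$ should contain, for each tower, a set almost disjoint from cofinitely many tower elements, so that the tower cannot generate a filter contained in $\mathcal{U}$.

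First I would set up a recursion of length $\mathfrak{c}=\omega_1$ (this is where CH enters). Enumerate in order type $\omega_1$ all the objects that must be ``killed'': using a bookkeeping function I list all potential towers, or more efficiently, all countable decreasing-mod-finite sequences $\{A_n : n\in\omega\}$ together with every $\omega_1$-indexed decreasing chain, recorded via their initial segments. At stage $\alpha$ I will have a centered family $\mathcal{C}_\alpha$ of size less than $\mathfrak{c}$, and I extend it by decisively handling the $\alpha$-th tower $\mathcal{A}^\alpha = \{A^\alpha_\xi : \xi<\omega_1\}$: I add to $\mathcal{C}_\alpha$ a set $D_\alpha$ that witnesses that $\mathcal{A}^\alpha$ cannot sit inside the final ultrafilter. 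Concretely, because $\mathcal{A}^\alpha$ has no pseudointersection, the countably many sets already committed to $\mathcal{C}_\alpha$ admit a pseudointersection $P$ (by maintaining a P-point-like genericity), and $P$ is not almost contained in every $A^\alpha_\xi$; I then choose $D_\alpha$ to be (a subset of) $P$ that is almost disjoint from some $A^\alpha_\xi$, keeping $\mathcal{C}_{\alpha+1}=\mathcal{C}_\alpha\cup\{D_\alpha\}$ centered. At limits I take unions. The final $\mathcal{U}$ is any ultrafilter extending $\bigcup_{\alpha<\omega_1}\mathcal{C}_\alpha$; by construction no tower generates a filter contained in $\mathcal{U}$, so $\mathcal{U}$ lies in no nowhere dense P-set, which is exactly statement (1).

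The subtle point, and what I expect to be the main obstacle, is arranging the diagonalization so that a \emph{single} set $D_\alpha$ defeats an entire $\omega_1$-length tower while preserving centeredness against \emph{everything} added so far. A tower is long, so I cannot simply remove one tower element; I must ensure that the chosen $D_\alpha$ is incompatible (in the filter sense) with the tower being a subfamily of $\mathcal{U}$. The resolution is to exploit that a tower has no pseudointersection, so the decreasing chain must eventually ``escape'' any infinite set I have built up; I pick $D_\alpha$ infinite with $D_\alpha\subseteq^\ast$ each previously chosen set but $D_\alpha\cap A^\alpha_\xi$ finite for some large $\xi$ — such a $\xi$ exists precisely because no infinite set is a pseudointersection of the tower. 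Verifying that this choice can always be made compatibly with the at-most-countable commitments at each stage is the only place requiring care, and it is handled by a routine P-point-style fusion argument using CH to keep each $\mathcal{C}_\alpha$ of size $<\mathfrak{c}$ and hence extendible.

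A cleaner alternative I would keep in mind is to simply build $\mathcal{U}$ to be a P-point (possible under CH by the classical Ketonen/Rudin construction), and then invoke the fact that a P-point cannot contain a tower: if a tower $\{A_\xi\}$ were $\subseteq\mathcal{U}$, then by the P-point property the countable subfamily $\{A_n : n\in\omega\}$ has a pseudointersection $P\in\mathcal{U}$, and one pushes this to show $P$ is a pseudointersection of the whole tower, contradicting that towers have none. This reduces the whole statement to ``every P-point intersects every tall P-ideal,'' and the existence of P-points under CH is standard, so this is likely the shortest route and the one whose structure most directly foreshadows the elementary-submodel machinery of the later sections.
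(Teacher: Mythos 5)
Your reduction is fine: by Proposition \ref{CH pideal contiene torre} it does suffice to produce an ultrafilter $\mathcal{U}$ that omits a member of every tower (equivalently, contains a member of every increasing tower). Both of your proposed executions, however, have genuine gaps. The main route founders on cardinality: a tower is an $\omega_{1}$-sequence of subsets of $\omega$, so under \textsf{CH} there are $\mathfrak{c}^{\omega_{1}}=2^{\omega_{1}}\geq\omega_{2}$ towers (and just as many nowhere dense \textsf{P}-filters), and they cannot be enumerated in a recursion of length $\mathfrak{c}=\omega_{1}$. The ``initial segments'' bookkeeping does not repair this: defeating a tower means excluding one of its \emph{actual} members from $\mathcal{U}$, and a fixed countable decreasing sequence is an initial segment of $2^{\omega_{1}}$ distinct towers whose later members are essentially unconstrained, so no commitment made while seeing only a countable approximation excludes a member of every extension. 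Worse, your construction manufactures exactly the configuration it must avoid: each $D_{\alpha}$ is chosen inside a pseudointersection of all earlier commitments, so $\left\{ D_{\alpha}\mid\alpha<\omega_{1}\right\}$ is a $\subseteq^{\ast}$-decreasing $\omega_{1}$-chain contained in the final ultrafilter, and nothing in your sketch prevents this chain from having no pseudointersection, i.e.\ from being a tower inside $\mathcal{U}$. This is precisely why the paper's proof uses no recursion or enumeration at all: it assigns to \emph{every} increasing tower $\mathcal{A}$ a countable elementary submodel $N\left(\mathcal{A}\right)$ with $\mathcal{A}\in N\left(\mathcal{A}\right)$, selects the single tower element indexed by the height $\delta_{N\left(\mathcal{A}\right)}$, and proves that the resulting family of $2^{\omega_{1}}$ selected sets is centered via Lemma \ref{Lema contencion Hw1}; centeredness of a family of arbitrary size suffices to extend to an ultrafilter, so no listing in order type $\omega_{1}$ is ever needed.

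Your fallback route rests on a false lemma: a \textsf{P}-point \emph{can} contain a tower. Under \textsf{CH} one can construct a simple \textsf{P}-point, i.e.\ an ultrafilter generated by a $\subseteq^{\ast}$-decreasing chain $\left\{ A_{\alpha}\mid\alpha<\omega_{1}\right\}$; such an ultrafilter is a \textsf{P}-point, and its generating chain is a tower: if $P$ were a pseudointersection of the chain, then $P\in\mathcal{U}$, and splitting $P$ into two infinite halves, the half belonging to $\mathcal{U}$ almost contains some $A_{\beta}$, forcing the other half (which is $\subseteq^{\ast}A_{\beta}$) to be finite, a contradiction. Consistent counterexamples also abound: in the Laver model every ultrafilter contains a tower (the result of \cite{PFiltersCohenRandomLaver} quoted in the introduction), while \textsf{P}-points exist there since Laver forcing preserves them. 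The precise step where your argument breaks is the ``push'': a pseudointersection $P\in\mathcal{U}$ of the countable subfamily $\left\{ A_{n}\mid n\in\omega\right\}$ carries no information whatsoever about $A_{\xi}$ for $\xi\geq\omega$, and in a simple \textsf{P}-point there is provably no pseudointersection of the whole chain, in or out of $\mathcal{U}$.
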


\begin{proof}
By Proposition \ref{CH pideal contiene torre}, it is enough to construct an
ultrafilter that intersects every increasing tower. Let $\mathbb{T}$ be the
family of all increasing towers. For $\mathcal{A}\in\mathbb{T},$ we enumerate
it as $\mathcal{A}=\{B^{\mathcal{A}}\left(  \alpha\right)  \mid\alpha\in
\omega_{1}\}.$ Furthermore, we choose $N\left(  \mathcal{A}\right)  \in$
\textsf{Sub}$\left(  \omega_{2}\right)  $ such that $\mathcal{A}\in N\left(
\mathcal{A}\right)  .$ Denote $\delta_{\mathcal{A}}=\delta_{N\left(
\mathcal{A}\right)  }$ and $B^{\mathcal{A}}=B^{\mathcal{A}}(\delta
_{\mathcal{A}}).$

\qquad\qquad\qquad\qquad

Let $\mathcal{B=\{}B^{\mathcal{A}}\mid\mathcal{A\in\mathbb{T}\}}.$ We claim
that $\mathcal{B}$ is a centered family. Once this is proved, any ultrafilter
extending $\mathcal{B}$ will be as desired.

\begin{claim}
Let $\mathcal{A}_{0},...,\mathcal{A}_{n}\in\mathbb{T}.$ Denote $\delta
_{i}=\delta_{\mathcal{A}_{i}}$ and assume $\delta_{i}\leq\delta_{i+1}$ for
$i<n.$ There are $\beta_{i}<\delta_{i}$ such that $B_{\beta_{0}}%
^{\mathcal{A}_{0}}\cap...\cap B_{\beta_{n}}^{\mathcal{A}_{n}}$ is infinite.
\end{claim}

\qquad\ \qquad\ \ \ 

We prove the claim by induction. For $n=0,$ we can take any $\beta_{0}%
<\delta_{0}.$ Assume the claim is true for $n,$ we will prove it is also true
for $n+1.$ Fix $\beta_{i}<\delta_{i}$ for $i\leq n$ such that $C=B_{\beta_{0}%
}^{\mathcal{A}_{0}}\cap...\cap B_{\beta_{n}}^{\mathcal{A}_{n}}$ is infinite.
Since $\beta_{i}<\delta_{i},$ we know that $B_{\beta_{i}}^{\mathcal{A}_{i}}\in
N(\mathcal{A}_{i}).$ Furthermore, since $\delta_{i}\leq\delta_{n+1},$ we can
apply Lemma \ref{Lema contencion Hw1} and conclude that each $B_{\beta_{i}%
}^{\mathcal{A}_{i}}$ (for $i\leq n$) is in $\mathcal{N(A}_{n+1}\mathcal{)},$
so $C$ is in there as well. Finally, since $\mathcal{A}_{n+1}\in
\mathcal{N(A}_{n+1}\mathcal{)}$ and is an increasing tower, by elementarity we
can find $\beta_{n+1}<\delta_{n+1}$ such that $C\cap B_{\beta_{n+1}%
}^{\mathcal{A}_{n+1}}$ is infinite. This finishes the proof of the claim.

\qquad\qquad\ \ \ \qquad\ \ 

We can now prove that $\mathcal{B}$ is centered. Let $\mathcal{A}%
_{0},...,\mathcal{A}_{n}\in\mathbb{T}.$ By the claim above, we find $\beta
_{i}<\delta_{i}$ such that $B_{\beta_{0}}^{\mathcal{A}_{0}}\cap...B_{\beta
_{n}}^{\mathcal{A}_{n}}$ is infinite. Finally, since $B^{\mathcal{A}_{i}}$
almost contains $B_{\beta_{i}}^{\mathcal{A}_{i}},$ we conclude that
$\bigcap\limits_{i\leq n}B^{\mathcal{A}_{i}}$ is infinite.
\end{proof}

\qquad\qquad\ \ \qquad\ \ \ \ 

To summarize the previous proof, we assign an elementary submodel to each
increasing tower. We then show that the family consisting of the tower
elements indexed by the corresponding model's height is centered. We aim to
apply this same approach in a forcing extension. However, we now face the
challenge that the Continuum Hypothesis (which was essential for invoking
Lemma \ref{Lema contencion Hw1}) is no longer available. To resolve this
issue, we will work from the ground model using names for increasing towers
rather than the towers themselves. Naturally, this introduces several
additional complications. A significant portion of the work is devoted to
developing an appropriate method for \textquotedblleft
transferring\textquotedblright\ names between models. Another crucial aspect
is that we will need to impose stronger requirements on the elementary
submodels we work with, as we will see in the next section.

\section{On countable elementary submodels \label{Seccion submodelos}}

For this section, we fix a sufficiently large regular cardinal $\kappa$. Our
goal is to obtain several results concerning the combinatorics and interaction
of countable elementary submodels of \textsf{H}$\left(  \kappa\right)  .$ We
believe that the techniques and results developed here will have applications
beyond the scope of this paper.

\begin{definition}
Let $N$ be a countable subset of \textsf{H}$\left(  \kappa\right)  $ and
$n\in\omega.$ Define, by recursion on $n,$ what it means for $N$ \emph{to be
an }$n$\emph{-model} as follows:

\begin{enumerate}
\item $N$ is a $0$\emph{-model} if $N\in$ \textsf{Sub}$\left(  \kappa\right)
.$

\item $N$ is an $\left(  n+1\right)  $\emph{-model} if $N\in$ \textsf{Sub}%
$\left(  \kappa\right)  $ and there is a continuous and increasing chain
$\left\{  N_{\alpha}\mid\alpha<\delta_{N}\right\}  \subseteq N$ such that $N=%
{\textstyle\bigcup\limits_{\alpha<\delta_{N}}}
N_{\alpha}$ and each $N_{\alpha}$ is an $n$-model. In this situation, we say
that $\left\{  N_{\alpha}\mid\alpha<\delta_{N}\right\}  $ witnesses that $N$
\emph{is an} $\left(  n+1\right)  $\emph{-model.}
\end{enumerate}
\end{definition}

\qquad\qquad\ \ 

Clearly, an $\left(  n+1\right)  $-model is an $n$-model. It is also easy to
see that if $N$ is an $\left(  n+1\right)  $-model and $X$ is any element of
$N,$ then we can find a sequence $\left\{  N_{\alpha}\mid\alpha<\delta
_{N}\right\}  $ witnessing that $N$ is an $\left(  n+1\right)  $-model for
which $X\in N_{0}$ (and hence $X\in N_{\alpha}$ for every $\alpha<\delta_{N}%
$). We reiterate that every $n$-model is countable. It is not hard to find
$n$-models, as the next lemma shows:

\begin{lemma}
Let $X\in$ \textsf{H}$\left(  \kappa\right)  $ and $n\in\omega.$ There is $N$
an $n$-model such that $X\in N.$
\end{lemma}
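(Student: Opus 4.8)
The plan is to prove the statement by induction on $n$.

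\medskip

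The plan is to prove the lemma by induction on $n$, since the definition of an $n$-model is itself recursive.

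\medskip

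\textbf{Base case.} For $n=0$, the claim is simply that every element $X \in$ \textsf{H}$\left( \kappa \right)$ belongs to some $M \in$ \textsf{Sub}$\left( \kappa \right)$. This is one of the most standard facts about elementary submodels: by the downward L\"owenheim--Skolem theorem, every set (in particular $\{X\}$) is contained in a countable elementary submodel of $($\textsf{H}$\left( \kappa \right), \in, \trianglelefteq)$. So the base case requires no real work beyond citing this foundational fact.

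\medskip

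\textbf{Inductive step.} Assume the result holds for $n$: every element of \textsf{H}$\left( \kappa \right)$ lies in some $n$-model. I want to produce an $(n+1)$-model containing a given $X$. The natural approach is to build a continuous increasing chain $\{N_\alpha \mid \alpha < \omega_1\}$ of countable elementary submodels of \textsf{H}$\left( \kappa \right)$, where each $N_\alpha$ is an $n$-model, and then to find a ``closure point'' $N = N_{\delta}$ along this chain that reflects the chain-building construction back into itself. Concretely, I would recursively build $N_\alpha$ so that at each stage $N_{\alpha+1}$ is an $n$-model (using the induction hypothesis and elementarity to choose it appropriately) containing $N_\alpha$ as an element, containing $X$, and containing the parameters needed to continue the construction; at limit stages I take unions. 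The key is to arrange that the \emph{function} $\alpha \mapsto N_\alpha$, or at least enough of the construction data, is itself an element of the model we ultimately select, so that the tail of the chain below our chosen height is captured inside that model and witnesses the $(n+1)$-model property.

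\medskip

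\textbf{Main obstacle.} The delicate point is the interaction between two requirements on the final model $N$: first, $N$ must itself be an element of \textsf{Sub}$\left( \kappa \right)$, and second, the witnessing chain $\{N_\alpha \mid \alpha < \delta_N\}$ must be a subset of $N$ with each $N_\alpha$ an $n$-model and with $N = \bigcup_{\alpha < \delta_N} N_\alpha$. To make $\delta_N$ equal to the length of the witnessing chain, I would run the whole construction inside a fixed ambient elementary submodel, or equivalently build a continuous increasing chain $\{M_\beta \mid \beta < \omega_1\} \subseteq$ \textsf{Sub}$\left( \kappa \right)$ whose members are $n$-models and whose union-reflection behaves correctly; the set $\{\delta_{M_\beta} \mid \beta < \omega_1\}$ is a club, so I can pass to a limit stage $\beta$ where $\delta_{M_\beta}$ coincides with the index, guaranteeing $\delta_N = M_\beta \cap \omega_1$ matches the chain length. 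The heart of the argument is ensuring the chain $\{N_\alpha \mid \alpha < \delta_N\}$ lands inside $N$ while each piece retains the $n$-model property inductively; once the bookkeeping on heights and elementarity is set up so that the construction data is reflected into $N$, the verification is routine, and the continuity and increasing conditions follow directly from taking unions at limits.
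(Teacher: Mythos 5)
Your base case and the overall inductive frame are reasonable, but there is a genuine gap at the crux of the inductive step. The witnessing chain for your final $(n+1)$-model is $\left\{ N_{\alpha}\mid\alpha<\delta\right\}$, and the definition requires \emph{every} member of this chain --- including those at limit indices --- to be an $n$-model. Your construction only guarantees this at successor stages; at limit stages you ``take unions,'' and the union of an $\in$-increasing chain of $n$-models is not obviously an $n$-model. For instance, at stage $\omega$ the model $N_{\omega}$ has height $\delta_{N_{\omega}}>\omega$, so the obvious candidate for a witnessing chain inside $N_{\omega}$, namely the chain built so far, has the wrong order type ($\omega$ instead of $\delta_{N_{\omega}}$), and some new argument is needed to produce a correct one. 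Nor can you simply discard the bad limit members: removing them destroys continuity, since at a limit position of the witnessing chain the member must \emph{equal} the union of its predecessors. Your ``main obstacle'' paragraph addresses only the top-level bookkeeping (making $\delta_{N}$ equal the length of the chain via the club of heights), which is the easy half; the phrase ``the verification is routine'' is exactly where the missing idea lives.

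The paper fills this hole with a different device: it fixes a single continuous increasing chain $\left\{ M_{\alpha}\mid\alpha\in\omega_{1}\right\}$ of $0$-models with $X\in M_{0}$, lets $C_{0}$ be the club of heights, and iterates $C_{m+1}=\{\delta\in C_{m}\mid\textsf{OT}\left(\delta\cap C_{m}\right)=\delta\}$; one then checks by induction that $\delta_{\alpha}\in C_{n}$ forces $M_{\alpha}$ to be an $n$-model, because the subchain indexed along $C_{n-1}\cap\delta_{\alpha}$ is continuous (its index set is club in $\delta_{\alpha}$) and has order type exactly $\delta_{\alpha}$ by the fixed-point condition. Alternatively, your route could be repaired by proving the lemma your sketch silently assumes --- that the union of a countable continuous $\in$-increasing chain of $n$-models is an $n$-model --- e.g.\ by splicing witnessing chains (choose the witnessing chain of each $N_{j+1}$ so that $N_{j}$ lies in its first member, insert $N_{j+1}$ itself as a boundary member, and use indecomposability of heights to see the spliced chain has order type exactly $\delta_{N}$); but either way a substantive argument beyond ``take unions at limits'' is required. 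One smaller correction: you do not need the function $\alpha\mapsto N_{\alpha}$ to be an \emph{element} of $N$ (indeed the whole $\omega_{1}$-sequence cannot be); the definition only asks that the witnessing chain be a \emph{subset} of $N$, which follows automatically from $N_{\alpha}\in N_{\alpha+1}\subseteq N$, so the ``reflection of the construction data into $N$'' that you worry about is a non-issue.
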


\begin{proof}
Let $\left\{  M_{\alpha}\mid\alpha\in\omega_{1}\right\}  \subseteq$
\textsf{Sub}$\left(  \kappa\right)  $ be a continuous increasing chain such
that $X\in M_{0}$. Define $C=\left\{  \delta_{\alpha}\mid\alpha\in\omega
_{1}\right\}  ,$ where $\delta_{\alpha}=\delta_{M_{\alpha}}$ for every
$\alpha\in\omega_{1}.$ We now recursively define:

\qquad\qquad\ \ 

\hfill%
\begin{tabular}
[c]{lll}%
$C_{0}$ & $=$ & $C$\\
$C_{n+1}$ & $=$ & $\{\delta\in C_{n}\mid$\textsf{OT}$\left(  \delta\cap
C_{n}\right)  =\delta\}$%
\end{tabular}
\qquad\ \qquad\qquad\hfill

\qquad\qquad\ \ \ \ \qquad\ \ \ \medskip

Note that each $C_{n}$ is a club and $C_{n+1}\subseteq C_{n}$. Moreover, it is
not hard to see that if $\delta_{\alpha}\in C_{n},$ then $M_{\alpha}$ is an
$n$-model.
\end{proof}

\qquad\ \qquad\ \ 

We need the following notion:

\begin{definition}
Let $M_{0},...,M_{n}$ $\in$ \textsf{Sub}$\left(  \kappa\right)  .$ We say that
the sequence $\left\langle M_{0},...,M_{n}\right\rangle $ is $\delta
$\emph{-increasing }if $\delta_{M_{i}}\leq\delta_{M_{i+1}}$ for each $i<n.$
\end{definition}

\qquad\qquad\ \ \ 

We remark that it will often be the case that $\delta_{M_{i}}=\delta_{M_{i+1}%
}$ for some $i<n.$ When working with a $\delta$-increasing
sequence$\ \left\langle M_{0},...,M_{n}\right\rangle ,$ we implicitly assume
that $n\in\omega$ and we will write $\delta_{i}$ instead of $\delta_{M_{i}}$
if there is no risk of confusion. In this paper, a\emph{ partition} $P$ is
simply a collection of pairwise disjoint sets ($\emptyset\in P$ is allowed)
and a partition of a set $A$ is a partition whose union is $A.$

\begin{proposition}
[\textsf{CH}]Assume $l,n\in\omega$ such that $l>0,$ $\left\langle
M_{0},...,M_{n}\right\rangle $ is a $\delta$-increasing sequence and $A_{i}\in
M_{i}\cap\left[  \omega_{l}\right]  ^{\leq\omega}$ for every $i\leq n.$ Let
$N$ be an $l$-model such that $\delta_{n}<\delta_{N}$ and $X\in N$ be any set.
There are $L\in N\ $and $\left\{  A_{i}\left(  u\right)  \mid i\leq n\wedge
u\in2\right\}  $ such that for every $i\leq n,$ the following properties
hold:\label{Partir sucesion}

\begin{enumerate}
\item $L\in$ \textsf{Sub}$\left(  \kappa\right)  ,$ $\delta_{n}<\delta_{L}$
and $X\in L.$

\item $\left\{  A_{i}\left(  0\right)  ,A_{i}\left(  1\right)  \right\}  \in
M_{i}$ and is a partition of $A_{i}.$

\item $A_{i}\left(  0\right)  \in L$ and $A_{i}\left(  1\right)  \cap
L=\emptyset.$
\end{enumerate}
\end{proposition}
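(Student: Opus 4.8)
The plan is to first strip the statement down to a single construction problem. The requirement $A_i(1)\cap L=\emptyset$ together with $A_i(0)\cup A_i(1)=A_i$ forces $A_i\cap L\subseteq A_i(0)$, while $A_i(0)\in L$ forces (since $A_i(0)$ is countable and an element of $L$) that $A_i(0)\subseteq L$, hence $A_i(0)\subseteq A_i\cap L$; so the partition is uniquely determined by $L$, namely $A_i(0)=A_i\cap L$ and $A_i(1)=A_i\setminus L$. As $A_i(1)\cap L=\emptyset$ is then automatic and $\{A_i(0),A_i(1)\}\in M_i$ amounts to $A_i\cap L\in M_i$, the whole proposition reduces to the following: find $L\in N$ with $L\in\textsf{Sub}(\kappa)$, $X\in L$, $\delta_n<\delta_L$, and $A_i\cap L\in M_i\cap L$ for every $i\le n$. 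I would carry out this construction by induction on $l$, the statement of Proposition \ref{Partir sucesion} itself serving as the induction hypothesis at level $l-1$.

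For the base case $l=1$, every $A_i$ is a countable subset of $\omega_1$, hence $A_i\in\textsf{H}(\omega_1)$. Fix a continuous increasing chain $\{N_\alpha\mid\alpha<\delta_N\}$ of $0$-models witnessing that $N$ is a $1$-model, chosen so that $X\in N_0$. The heights $\delta_{N_\alpha}$ are cofinal in $\delta_N>\delta_n$, so I may pick $\alpha$ with $\delta_n<\delta_{N_\alpha}$ and set $L=N_\alpha$; then $L\in N$, $X\in L$ and $\delta_n<\delta_L$. Since $\delta_{M_i}\le\delta_n<\delta_L$, Lemma \ref{Lema contencion Hw1} gives $\textsf{H}(\omega_1)\cap M_i\subseteq\textsf{H}(\omega_1)\cap L$, so $A_i\in L$; as $A_i$ is countable this yields $A_i\subseteq L$, whence $A_i\cap L=A_i\in M_i\cap L$ and the split $A_i(0)=A_i$, $A_i(1)=\emptyset$ works (trivially) for each $i$.

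For the inductive step $l\ge 2$, fix a continuous increasing chain $\{N_\beta\mid\beta<\delta_N\}$ of $(l-1)$-models witnessing that $N$ is an $l$-model, with $X\in N_0$, and choose $\beta$ with $\delta_n<\delta_{N_\beta}<\delta_N$; write $N^\ast=N_\beta$, an $(l-1)$-model with $X\in N^\ast\in N$ and $\delta_n<\delta_{N^\ast}$. The goal is to reduce the $\omega_l$-problem to an $\omega_{l-1}$-problem and invoke the hypothesis at level $l-1$. To this end I would let $\mu=\sup\left(N^\ast\cap\omega_l\right)<\omega_l$ and, using elementarity, fix inside the appropriate model a bijection $h\colon\omega_{l-1}\longrightarrow\mu$; pulling the relevant ordinals back along $h$ replaces each $A_i$ by a countable subset $\bar A_i$ of $\omega_{l-1}$ and the models $M_i$ by their traces, to which Proposition \ref{Partir sucesion} for $l-1$ applies, producing a smaller submodel that I then transfer back through $h$ to obtain $L$. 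Verifying that the $\delta$-increasing structure of $\langle M_0,\dots,M_n\rangle$ is preserved under this projection, and that the resulting traces satisfy $A_i\cap L\in M_i\cap L$, is where Proposition \ref{Indescomp} and the repeated use of Lemma \ref{Lema contencion Hw1} along the chain enter.

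The main obstacle is the two-sided membership $A_i\cap L\in M_i\cap L$ when $A_i\notin L$. The half $A_i\cap L\in L$ is the delicate one: for $l\ge 2$ the set $A_i$ and its enumerations lie in $\textsf{H}(\omega_l)\setminus\textsf{H}(\omega_1)$, so the clean \textsf{CH}-coding of Lemma \ref{Lema contencion Hw1}---which transfers hereditarily countable objects across models of comparable height---does not reach them, and there is no single-model analogue of that lemma at the level of $\textsf{H}(\omega_l)$. This is precisely what the $l$-model hypothesis is designed to repair: the iterated continuous chains packaged into an $l$-model provide, one dimension at a time, the coherence needed to reconstruct $A_i\cap L$ inside $L$, grounding out after $l$ steps in the genuine $\textsf{H}(\omega_1)$-coding of the base case. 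The second difficulty is uniformity: a single projection $h$ must simultaneously serve all the $M_i$ across the height gap $\delta_{M_i}\le\delta_n<\delta_{N^\ast}$, and arranging the traces to land in every $M_i$ at once---rather than one at a time---is the part of the argument I expect to require the most care.
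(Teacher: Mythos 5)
Your opening reduction is correct and clean: since $A_i(0)\in L$ is countable it is a subset of $L$, and $A_i(1)\cap L=\emptyset$ then forces $A_i(0)=A_i\cap L$, so the proposition is equivalent to finding $L\in N\cap\textsf{Sub}\left(\kappa\right)$ with $X\in L$, $\delta_n<\delta_L$ and $A_i\cap L\in M_i\cap L$ for all $i\leq n$; your base case $l=1$ is essentially the paper's. But the inductive step has a genuine gap, and it is exactly at the point you yourself flag at the end. The device you propose, a single bijection $h\colon\omega_{l-1}\longrightarrow\mu$ with $\mu=\sup\left(N^{\ast}\cap\omega_{l}\right)$, cannot work as stated: $\mu$ is definable from $N^{\ast}$ but belongs to no $M_i$ (it is an ordinal of countable cofinality produced by a model of height above all the $\delta_i$), so no such $h$ can lie in any $M_i$, and hence the traces $\bar{A}_i=h^{-1}\left[A_i\cap\mu\right]$ need not belong to $M_i$ --- which destroys the clause $\left\{A_i(0),A_i(1)\right\}\in M_i$ that the induction must deliver. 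Worse, there is no reason that $A_i\subseteq\mu$: the sets $M_i\cap\omega_l$ are in no way contained in $N^{\ast}\cap\omega_l$, so your projection does not even see all of $A_i$, and nothing in your sketch guarantees that the invisible tail of $A_i$ misses the final $L$.

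The missing idea is the paper's stabilization claim, which dissolves both difficulties at once. Working with a witnessing chain $\left\{N_\alpha\mid\alpha<\delta_N\right\}$, one first finds a single $\eta<\delta_N$ such that $\bigcup\left(N_\eta\cap A_i\right)=\bigcup\left(N_{\eta+1}\cap A_i\right)$ simultaneously for all $i\leq n$; this is where Proposition \ref{Indescomp} really enters, via the coloring $c(\eta)=$ least $i$ with $\bigcup\left(N_\eta\cap A_i\right)<\bigcup\left(N_{\eta+1}\cap A_i\right)$, whose failure to stabilize would embed a copy of $\delta_N$ into $\overline{A_i}$, contradicting $\textsf{OT}\left(\overline{A_i}\right)<\delta_i<\delta_N$. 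The resulting cut points $\beta_i=\bigcup\left(N_\eta\cap A_i\right)+1$ lie in $\overline{A_i}\cap\overline{N_\eta}\subseteq M_i\cap N_{\eta+1}$, and instead of one global collapse one uses, for each $i$ separately, the $\trianglelefteq$-least injection $g_{\beta_i}\colon\beta_i\longrightarrow\omega_l$, which is canonically definable from $\beta_i$ and therefore belongs to every submodel containing $\beta_i$ --- in particular to both $M_i$ and $N_{\eta+1}$. This is precisely the uniformity you were missing: canonicity, not a single map, makes the projections available in all relevant models. One then applies the induction hypothesis inside the $l$-model $N_{\eta+1}$ to the sets $B_i=g_{\beta_i}\left[A_i\cap\beta_i\right]\in M_i$ (with $\beta_0,\dots,\beta_n,N_\eta$ carried along as the side parameter $X$), and the choice of $\eta$ guarantees that any $L$ with $N_\eta\subseteq L\subseteq N_{\eta+1}$ satisfies $A_i\cap L\subseteq\beta_i$, so the unprojected tail of $A_i$ is automatically disjoint from $L$ and pulling $B_i(0)$ back through $g_{\beta_i}^{-1}$ yields the required partition. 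Without the stabilization step, your arbitrary choice of $N_\beta$ gives no cut point in $M_i\cap L$, and the argument cannot close.
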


\begin{proof}
Before starting the proof, note that (thanks to the Pairing Axiom), it is
equivalent to consider a single set $X$ or finitely many of them, since any
finite collection can be coded as a single set. The proof proceeds by
induction on $l.$ We start with $l=1.$ In here, we have that each $A_{i}$
belongs to \textsf{H}$\left(  \omega_{1}\right)  .$ Since $\delta_{i}%
<\delta_{N}$ for every $i\leq n,$ we know that $A_{0},...,A_{n}\in N$ (see
Lemma \ref{Lema contencion Hw1}). Define $A_{i}\left(  0\right)  =A_{i}$ and
$A_{i}\left(  1\right)  =\emptyset$. Since $N$ is the union of an increasing
chain of submodels, we can find $L\in N$ for which $\delta_{n},A_{0}%
,...,A_{n},X\in L.$ This finishes the proof for the base case of the induction.

\qquad\qquad\qquad\ \ \ \ 

We now assume the proposition is true for $l,$ we will prove it is true for
$l+1$ as well. Given any $\xi\in\omega_{l+1},$ denote by $g_{\xi}%
:\xi\longrightarrow\omega_{l}$ the $\trianglelefteq$-least injective function.
It follows that if $\xi\in M\in$ \textsf{Sub}$\left(  \kappa\right)
,$\ then\ $g_{\xi}\in M$. Since $N$ is an $\left(  l+1\right)  $-model, we can
find a continuous increasing chain $\left\{  N_{\alpha}\mid\alpha<\delta
_{N}\right\}  \subseteq N$ that witnesses that $N$ is an $\left(  l+1\right)
$-model and $X,\delta_{n}\in N_{0}.$

\begin{claim}
There is $\eta<\delta_{N}$ such that for every $i\leq n,$ the following
holds:\medskip

\hfill%
\begin{tabular}
[c]{l}%
$%
{\displaystyle\bigcup}
\left(  N_{\eta}\cap A_{i}\right)  =%
{\displaystyle\bigcup}
\left(  N_{\eta+1}\cap A_{i}\right)  .$%
\end{tabular}
\qquad\qquad\ \qquad\qquad\qquad\hfill
\end{claim}

\qquad\ \ \ \qquad\ \ \ 

We proceed by contradiction. Define $c:\delta_{N}\longrightarrow n+1$ such
that $c\left(  \eta\right)  $ is the least $i$ for which $%
{\displaystyle\bigcup}
\left(  N_{\eta}\cap A_{i}\right)  <%
{\displaystyle\bigcup}
\left(  N_{\eta+1}\cap A_{i}\right)  .$ By Proposition \ref{Indescomp}, we can
find $i\leq n$ for which the set $F=\left\{  \eta\in\delta_{N}\mid c\left(
\eta\right)  =i\right\}  $ is order isomorphic to $\delta_{N}.$ For any
$\eta\in F,$ let $\varepsilon_{\eta}=%
{\displaystyle\bigcup}
\left(  N_{\eta}\cap A_{i}\right)  $ and define $G=\left\{  \varepsilon_{\eta
}\mid\eta\in F\right\}  .$ Note that if $\eta,\xi\in F$ and $\eta<\xi,$ then
the following holds:\medskip

\hfill%
\begin{tabular}
[c]{lll}%
$\varepsilon_{\eta}$ & $=$ & $%
{\displaystyle\bigcup}
\left(  N_{\eta}\cap A_{i}\right)  $\\
& $<$ & $%
{\displaystyle\bigcup}
\left(  N_{\eta+1}\cap A_{i}\right)  $\\
& $\leq$ & $%
{\displaystyle\bigcup}
\left(  N_{\xi}\cap A_{i}\right)  $\\
& $=$ & $\varepsilon_{\xi}$%
\end{tabular}
\qquad\qquad\ \qquad\qquad\qquad\hfill

\qquad\ \ \ \ \qquad\ \ \ \ \ 

In this way, $F$ and $G$ are isomorphic, so the order type of $G$ is
$\delta_{N}.$ Clearly $G\subseteq\overline{A_{i}}$, but this is a
contradiction, since \textsf{OT(}$\overline{A_{i}})<\delta_{i}<\delta_{N}$.
This finishes the proof of the claim.

\qquad\qquad\ \ 

Fix $\eta<\delta_{N}$ as in the claim. For each $i\leq n,$ define:\medskip

\hfill%
\begin{tabular}
[c]{lll}%
$\beta_{i}$ & $=$ & $%
{\displaystyle\bigcup}
\left(  N_{\eta}\cap A_{i}\right)  +1$\\
& $=$ & $%
{\displaystyle\bigcup}
\left(  N_{\eta+1}\cap A_{i}\right)  +1.$%
\end{tabular}
\qquad\qquad\qquad\ \qquad\qquad\hfill

\qquad\ \ \ \ \qquad\ \ \ \ \ 

We now argue that $\beta_{i}\in N_{\eta+1}\cap M_{i}:$ On one hand, $\beta
_{i}\in\overline{A_{i}}\subseteq M_{i},$ and on the other hand, we have that
$\beta_{i}\in\overline{N_{\eta}\cap\omega_{l+1}}\subseteq N_{\eta+1}.$ Define
$B_{i}=g_{\beta_{i}}\left[  A_{i}\cap\beta_{i}\right]  $, which belongs to
$M_{i}$ (since both $\beta_{i}$ and $A_{i}$ are in $M_{i}$). In this way, we
have the following:

\begin{enumerate}
\item $\left\langle M_{0},...,M_{n}\right\rangle $ is a $\delta$-increasing
sequence and $B_{i}\in M_{i}\cap\left[  \omega_{l}\right]  ^{\leq\omega}$.

\item $\delta_{n}<\delta_{N_{\eta+1}}.$

\item $N_{\eta_{+1}}$ is an $l$-model.

\item $X,$ $\beta_{0},...,\beta_{n},N_{\eta}\in N_{\eta_{+1}}.$
\end{enumerate}

\qquad\ \ \ 

We can now invoke the induction hypothesis and find $L\in N_{\eta_{+1}}$ and a
sequence $\left\{  B_{i}\left(  u\right)  \mid i\leq n\wedge u\in2\right\}  $
with the following properties:

\begin{enumerate}
\item $L\in$ \textsf{Sub}$\left(  \kappa\right)  ,$ $\delta_{n}<\delta_{L}.$

\item $\left\{  B_{i}\left(  0\right)  ,B_{i}\left(  1\right)  \right\}  \in
M_{i}$ and is a partition of $B_{i}.$

\item $B_{i}\left(  0\right)  \in L$ and $B_{i}\left(  1\right)  \cap
L=\emptyset.$

\item $X,$ $\beta_{0},...,\beta_{n},N_{\eta}\in L.$
\end{enumerate}

\qquad\ \ \qquad\ \ \ 

Furthermore, since $N_{\eta}\subseteq L\subseteq N_{\eta_{+1}},$ it follows
that $\beta_{i}=%
{\displaystyle\bigcup}
\left(  L\cap A_{i}\right)  +1.$ We now define $A_{i}\left(  0\right)
=g_{\beta_{i}}^{-1}\left(  B_{i}\left(  0\right)  \right)  $ and $A_{i}\left(
1\right)  =A_{i}\setminus A_{i}\left(  0\right)  .$ We will now prove this are
the items we are looking for. Since $\beta_{i}$ and $B_{i}\left(  0\right)  $
are in $M_{i}\cap L,$ it follows that $A_{i}\left(  0\right)  \in M_{i}\cap
L.$ Moreover, since $A_{i}\in M_{i},\ $we conclude that$\ A_{i}\left(
1\right)  $ is also in $M_{i}.$ It remains to prove that $A_{i}\left(
1\right)  \cap L=\emptyset.$ Assume there is $\alpha\in A_{i}\left(  1\right)
\cap L.$ It follows that $\alpha<\beta_{i},$ so we apply it the function
$g_{\beta_{i}}.$ Since $\alpha\notin A_{i}\left(  0\right)  =g_{\beta_{i}%
}^{-1}\left(  B_{i}\left(  0\right)  \right)  ,$ it follows that $g_{\beta
_{1}}\left(  \alpha\right)  \notin B_{i}\left(  0\right)  ,$ which implies
that $g_{\beta_{1}}\left(  \alpha\right)  \in B_{i}\left(  1\right)  .$
Moreover, $\beta_{i},\alpha\in L,$ so $g_{\beta_{i}}\left(  \alpha\right)  \in
L\cap B_{i}\left(  1\right)  ,$ which is a contradiction, since these two sets
are disjoint.
\end{proof}

\qquad\qquad\ \ \ \ \ 

We will need the following notion:

\begin{definition}
Let $l,n\in\omega$ and $\left\langle M_{0},...,M_{n}\right\rangle $ a $\delta
$-increasing sequence. We say that $P=\left\langle P_{i}\mid i\leq
n\right\rangle $ \emph{is a decisive partition (of countable subsets of
}$\omega_{l}$\emph{) for} $\left\langle M_{0},...,M_{n}\right\rangle $ if for
every $i,j\leq n,$ the following conditions hold:

\begin{enumerate}
\item $P_{i}\in M_{i}$, is a finite partition and $P_{i}\subseteq\left[
\omega_{1}\right]  ^{\leq\omega}.$

\item If $i<j$ and $A\in P_{i},$ either $A\in P_{j}$ or $A\cap M_{j}%
=\emptyset.$
\end{enumerate}
\end{definition}

\qquad\ \qquad\ \ \ 

We will omit the phrase \textquotedblleft of countable subsets of $\omega_{l}%
$\textquotedblright\ when $l$ is clear from the context. The following lemma
was implicitly proved in \cite{PFiltersCohenRandomLaver}, for the convenience
of the reader, we provide the argument here.

\begin{lemma}
Let $l,n\in\omega$ and $\left\langle M_{0},...,M_{n}\right\rangle $ a $\delta
$-increasing sequence. If $P=\left\langle P_{i}\mid i\leq n\right\rangle $ is
a decisive partition for $\left\langle M_{0},...,M_{n}\right\rangle ,$ then
$\bigcup\limits_{i\leq n}P_{i}$ is a partition.
\end{lemma}

\begin{proof}
Let $A\in P_{i}$ and $B\in P_{j}$ with $A\neq B$ and $i\leq j.$ We need to
show that $A$ and $B$ are disjoint. If $A\in P_{j},$ then the conclusion is
trivial, so assume that $A\notin P_{j}$. Since $P$ is a decisive partition, it
follows that $A\cap M_{j}=\emptyset.$ Consequently, $A$ and $B$ are disjoint
because $B\subseteq M_{j}.$
\end{proof}

\qquad\ \ \qquad\ \ 

We need a few additional definitions.

\begin{definition}
Let $P$ and $R$ be two partitions. We say $R$ \emph{refines} $P$ (which we
denote by $R\preceq P$) in case the following conditions hold:

\begin{enumerate}
\item Every element of $R$ is contained in one of $P.$

\item Every element of $P$ is the union of some of the elements of $R.$
\end{enumerate}
\end{definition}

\qquad\ \ \ \qquad\ \ \ 

Note that if $R$ refines $P,$ then $\bigcup R=\bigcup P.$ We will say that $R$
$2$\emph{-refines} $P$ (denote by $R\preceq_{2}P)$ if $R\preceq P$ and every
element of $P$ is the union of at most two elements of $R.$ The following is
Lemma 41 of \cite{UltrafiltersinRandomModel}.

\begin{lemma}
[\textsf{CH}]Let $M,N\in$ \textsf{Sub}$\left(  \kappa\right)  $ with
$\delta_{M}\leq\delta_{N}.$ If $A\in M\cap N$ and is a countable subset of the
ordinals, then $\mathcal{P}\left(  A\right)  \cap M\subseteq N.$
\label{Lema potencia contenida}
\end{lemma}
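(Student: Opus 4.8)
The plan is to reduce the problem to subsets of a \emph{countable ordinal}, where Lemma \ref{Lema contencion Hw1} applies directly. First I would fix the canonical enumeration of $A$ by its order type. Since $A\in M$ is countable, the property recalled just before Lemma \ref{Lema contencion Hw1} gives $A\subseteq M$, and likewise $A\subseteq N$. Let $\gamma=$ \textsf{OT}$(A)$; this is a countable ordinal, and since $\gamma$ is definable from $A$ we have $\gamma\in M\cap\omega_{1}=\delta_{M}$, so in particular $\gamma<\omega_{1}$. Let $e\colon\gamma\longrightarrow A$ be the unique order isomorphism. Being the unique such map, $e$ is definable from $A$, hence by elementarity $e\in M$ and $e\in N$; moreover $e$ is a countable function, so $e\in$ \textsf{H}$\left(\omega_{1}\right)$.

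Next I would use $e$ to transport an arbitrary $B\in\mathcal{P}(A)\cap M$ down into \textsf{H}$\left(\omega_{1}\right)$. Set $C=e^{-1}[B]$. Then $C\subseteq\gamma$, so $C$ is a subset of a countable ordinal and therefore $C\in$ \textsf{H}$\left(\omega_{1}\right)$; and since $B,e\in M$, elementarity gives $C\in M$, i.e.\ $C\in$ \textsf{H}$\left(\omega_{1}\right)\cap M$. Because $\delta_{M}\leq\delta_{N}$, Lemma \ref{Lema contencion Hw1} yields $C\in$ \textsf{H}$\left(\omega_{1}\right)\cap N$, so in particular $C\in N$. Finally, $e\in N$ and $C\in N$, so $B=e[C]\in N$ by elementarity, which is exactly what we want.

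The one genuine point to appreciate (and the reason the statement is not an immediate corollary of Lemma \ref{Lema contencion Hw1}) is that $B$ itself need \emph{not} lie in \textsf{H}$\left(\omega_{1}\right)$: the ordinals appearing in $A$ can be arbitrarily large below $\kappa$, so $\mathcal{P}(A)\cap M\not\subseteq$ \textsf{H}$\left(\omega_{1}\right)\cap M$ in general, and Lemma \ref{Lema contencion Hw1} cannot be applied to $B$ directly. The device that overcomes this is precisely the canonical collapse $e$ of $A$ onto its countable order type $\gamma$: it converts the arbitrary subset $B$ into the subset $C\subseteq\gamma$ of a countable ordinal, to which the hypothesis $\delta_{M}\leq\delta_{N}$ does apply, and it does so in a way visible to both $M$ and $N$ since $e$ is definable from the common parameter $A$. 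I expect no serious obstacle beyond checking these elementarity bookkeeping facts ($e\in M\cap N$, $C\in M$, and the image $e[C]$ being computed inside $N$), all of which are routine applications of elementarity together with the fact that countable members of a model are contained in it.
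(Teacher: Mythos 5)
Your proof is correct, but note that a direct comparison with the paper is not possible: the paper does not prove this lemma at all, importing it as Lemma 41 of \cite{UltrafiltersinRandomModel}. That said, the collapse-and-transfer scheme you use is the natural argument and surely the intended one: the preimage $C=e^{-1}[B]$ is a subset of the countable ordinal $\gamma=\mathsf{OT}(A)$, hence lies in \textsf{H}$(\omega_{1})\cap M$; Lemma \ref{Lema contencion Hw1} (which is where \textsf{CH} enters) moves $C$ into $N$; and $B=e[C]$ is then recovered inside $N$ because the collapse $e$ is definable from the common parameter $A\in M\cap N$. Your closing diagnosis of why the lemma is not an immediate corollary of Lemma \ref{Lema contencion Hw1} --- that $B$ itself need not belong to \textsf{H}$(\omega_{1})$ --- is exactly right.

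One sentence in your write-up is false, though harmlessly so: \textquotedblleft $e$ is a countable function, so $e\in$ \textsf{H}$(\omega_{1})$\textquotedblright. Membership in \textsf{H}$(\omega_{1})$ requires a \emph{countable transitive closure}, and the transitive closure of $e$ contains every ordinal in $A=\operatorname{ran}(e)$; since in the intended applications $A$ is a countable subset of $\omega_{l}$ with $l>1$, the set $e$ fails to be in \textsf{H}$(\omega_{1})$ for precisely the same reason that $B$ does --- so this clause contradicts your own correct observation in the final paragraph. Fortunately nothing in your argument uses it: you only need $e\in M\cap N$ (definability from $A$) and $C\in$ \textsf{H}$(\omega_{1})$ (which holds because $C\subseteq\gamma<\omega_{1}$, so its transitive closure genuinely is countable). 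Delete that clause and the proof stands as written.
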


\qquad\ \ \ \ \ 

With the aid of Proposition \ref{Partir sucesion}, we can prove the following:

\begin{corollary}
[\textsf{CH}]Assume $l,n\in\omega$ with $l>0,$ $\left\langle M_{0}%
,...,M_{n}\right\rangle $ is a $\delta$-increasing sequence and
$P=\left\langle P_{i}\mid i\leq n\right\rangle $ is a decisive partition of
countable subsets of $\omega_{l}$ for $\left\langle M_{0},...,M_{n}%
\right\rangle .$ Let $N$ be an $l$-model such that $\delta_{n}<\delta_{N}$ and
$X\in N$ be any set. There are $L\in N\ $and $R=\left\langle R_{i}\mid i\leq
n\right\rangle $ such that the following conditions hold:
\label{Partir Particion}

\begin{enumerate}
\item $L\in$ \textsf{Sub}$\left(  \kappa\right)  ,$ $\delta_{n}<\delta_{L}$
and $X\in L.$

\item $R=\left\langle R_{i}\mid i\leq n\right\rangle $ is a decisive partition
for $\left\langle M_{0},...,M_{n}\right\rangle .$

\item $R_{i}\preceq_{2}P_{i}$ for every $i\leq n.$

\item If $A\in\bigcup\limits_{i\leq n}R_{i},$ then either $A\in L$ or $A\cap
L=\emptyset.$
\end{enumerate}
\end{corollary}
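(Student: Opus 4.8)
The plan is to derive Corollary~\ref{Partir Particion} from Proposition~\ref{Partir sucesion} by applying the latter simultaneously to all the (finitely many) blocks appearing in the given decisive partition $P$. First I would collect the family of all blocks that occur in the partition, namely $\mathcal{A}=\{A\mid A\in P_i\text{ for some }i\leq n\}$. Since each $P_i$ is finite and there are finitely many $i$, this is a finite family of countable subsets of $\omega_l$. Each such block $A$ lives in some $M_i$ (indeed, if $A\in P_i$ then $A\in M_i$), and I can arrange an indexing so that every block is associated with the least model $M_i$ containing it. The idea is then to feed this finite family into Proposition~\ref{Partir sucesion} — after coding the finite collection as a single set, as the proposition permits — to obtain, inside the model $N$, an $L$ and a simultaneous splitting $\{A(0),A(1)\}$ of each block with $A(0)\in L$ and $A(1)\cap L=\emptyset$, while keeping $\delta_n<\delta_L$ and $X\in L$.

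Next I would \emph{construct} the refinements $R_i$ from these splittings. For each $i\leq n$ and each $A\in P_i$, replace $A$ by the two pieces $A(0)$ and $A(1)$ produced above, so that $R_i=\{A(0),A(1)\mid A\in P_i\}$. By construction each $R_i$ is a finite partition refining $P_i$ with every block of $P_i$ split into at most two pieces, so $R_i\preceq_2 P_i$, giving item~(3). Item~(1) is immediate from the conclusion of Proposition~\ref{Partir sucesion}. For item~(4), every $A\in\bigcup_{i\leq n}R_i$ is one of the pieces $A(0)$ or $A(1)$, and by the third conclusion of the proposition we have $A(0)\in L$ and $A(1)\cap L=\emptyset$, which is exactly the required dichotomy.

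The main obstacle — and the step I would treat most carefully — is verifying item~(2), that $R=\langle R_i\mid i\leq n\rangle$ is again a \emph{decisive} partition for $\langle M_0,\dots,M_n\rangle$. The first clause (each $R_i\in M_i$, finite, consisting of countable subsets of $\omega_1$) follows because Proposition~\ref{Partir sucesion} guarantees $\{A(0),A(1)\}\in M_i$ for each block $A\in P_i$, so $R_i$ is a finite union of $M_i$-elements and hence lies in $M_i$. The genuinely delicate clause is the second: for $i<j$ and $A'\in R_i$, I must show that either $A'\in R_j$ or $A'\cap M_j=\emptyset$. Here I would invoke the decisiveness of the \emph{original} partition $P$: if $A'$ is a piece $A(u)$ of some $A\in P_i$, then since $P$ is decisive, either $A\in P_j$ (in which case the same splitting data $\{A(0),A(1)\}$ lies in $M_j$ by Lemma~\ref{Lema potencia contenida}, applied with $M_i,M_j$ and the countable set $A$, so $A'\in R_j$), or $A\cap M_j=\emptyset$ (in which case $A'\subseteq A$ forces $A'\cap M_j=\emptyset$). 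Checking that Lemma~\ref{Lema potencia contenida} applies — i.e. that $A\in M_i\cap M_j$ and is a countable set of ordinals, so that $\mathcal{P}(A)\cap M_i\subseteq M_j$ and therefore the partition $\{A(0),A(1)\}$ transfers into $M_j$ — is the crux of the argument, and I would make sure the hypotheses $\delta_i\leq\delta_j$ and $A\in P_j$ line up precisely with what that lemma requires.
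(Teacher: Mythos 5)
Your proposal is correct and takes essentially the same route as the paper: the paper also reduces to Proposition \ref{Partir sucesion} by listing every block of $P$ with its model repeated once per block (your ``associate each block with the least $M_i$ containing it'' is the same device, legitimate because the proposition allows repeated entries in a $\delta$-increasing sequence), and it handles blocks occurring in several $P_i$'s exactly as you do, via Lemma \ref{Lema potencia contenida}, so that a repeated block receives one coherent splitting whose pieces land in all later models. One small caution: the ``coding as a single set'' remark in Proposition \ref{Partir sucesion} pertains to the side parameter $X$, not to the family being split --- but your actual mechanism (one index per block, with the partition data produced in the corresponding $M_i$) is the correct one and matches the paper's proof.
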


\begin{proof}
Let $k_{i}=\left\vert P_{i}\right\vert $ and $P_{i}=\left\{  B_{i}\left(
v\right)  \mid v<k_{i}\right\}  .$ Consider the sequence $\left\langle
\overline{M}_{0},...,\overline{M}_{m}\right\rangle $ which is obtained from
$\left\langle M_{0},...,M_{n}\right\rangle $ by repeating $M_{i},$ $k_{i}$
many times. We then distribute $P_{i}$ among the copies of $M_{i}.$ Formally, define:

\begin{enumerate}
\item The intervals on the natural numbers $F_{0}=\left[  0,k_{0}-1\right]  $
and\newline$F_{i+1}=\left[  \max\left(  F_{i}\right)  +1,\max\left(
F_{i}\right)  +k_{i+1}-1\right]  $ for every $i\leq n.$

\item $m=\left\vert F_{0}\cup...\cup F_{n}\right\vert .$

\item $\overline{M}_{j}=M_{i}$ if $j\in F_{i}.$

\item $A_{i}=B_{i}\left(  v\right)  $ if $j\in F_{i}$ and $j$ is the $v$-th
element of $F_{i}.$
\end{enumerate}

\qquad\ \qquad\ \ 

Clearly $\left\langle \overline{M}_{0},...,\overline{M}_{m}\right\rangle $ is
$\delta$-increasing and $\delta_{\overline{M}_{m}}<\delta_{N}.$ We can apply
Proposition \ref{Partir sucesion} and find $L\in N$ and $Q=\langle Q_{j}\mid
j\leq m\rangle$ such that for every $j\leq m,$ the following conditions hold:

\begin{enumerate}
\item $L\in$ \textsf{Sub}$\left(  \kappa\right)  ,$ $\delta_{\overline{M}_{m}%
}<\delta_{L}$ and $X\in L.$

\item $Q_{j}\preceq_{2}\left\{  A_{j}\right\}  $ and it is in $\overline
{M}_{j}.$

\item One element of $Q_{j}$ is in $L$ and the other is disjoint from it.
\end{enumerate}

\qquad\ \qquad\ \ \ 

Furthermore, we make sure that if $A_{j}=A_{k},$ then $Q_{j}=Q_{k}.$ This is
possible by Lemma \ref{Lema potencia contenida}. Define $R_{i}=\bigcup
\limits_{j\in F_{i}}Q_{j}$ (for $i\leq n$) and $R=\left\langle R_{0}%
,...,R_{n}\right\rangle .$ It follows that $R_{i}\in M_{i}$ and $R_{i}%
\preceq_{2}P_{i}.$ It remains to prove that $R$ is decisive for $\left\langle
M_{0},...,M_{n}\right\rangle .$ Let $i<j$ and $D\in R_{i}.$ Pick $B\in P_{i}$
such that $D\subseteq B.$ If $B\notin P_{j},$ then $B\cap M_{j}=\emptyset$ and
the same holds for $D.$ Finally, if $B\in P_{j},$ then $D\in R_{j}$ by definition.
\end{proof}

\qquad\qquad\qquad\ \ 

The following is Lemma 43 of \cite{UltrafiltersinRandomModel}.

\begin{lemma}
[\textsf{CH}]Let $M,N\in$\textsf{Sub}$\left(  \kappa\right)  $ with
$\delta_{M}\leq\delta_{N},$ $l\in\omega$ and $A,B\in\left[  \omega_{l}\right]
^{\leq\omega}$ such that $A\in M$ and $B\in N.$ \label{partir conjunto en dos}

\begin{enumerate}
\item There is a partition $P=\left\{  A_{0},A_{1}\right\}  \in M$ of $A$ such
that $A_{0}\in N$ and $A_{1}\cap B=\emptyset.$

\item $A\cap B\in N.$
\end{enumerate}
\end{lemma}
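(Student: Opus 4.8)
The plan is to prove both items by induction on $l$, establishing item (1) first and reading off item (2) from it at each stage. The guiding object is the ordinal $\beta=\sup(A\cap B)+1$, and the crucial observation---which sidesteps the fact that neither $A$ nor $B$ need belong to both models---is that $\beta\in M\cap N$. First I would dispose of the base case $l=0$ (the case $l=1$ is identical, since there too $A\subseteq\omega_1$ is hereditarily countable). Here $A\in\textsf{H}(\omega_1)\cap M$, so by Lemma \ref{Lema contencion Hw1} (this is where \textsf{CH} is used) we get $A\in\textsf{H}(\omega_1)\cap N$, and since $A$ is countable, $A\subseteq N$. Taking $A_0=A$ and $A_1=\emptyset$ settles item (1), and item (2) is then immediate from $A,B\in N$.

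For the inductive step $l\to l+1$, I may assume $A\cap B\neq\emptyset$ (otherwise $A_0=\emptyset$, $A_1=A$ works). The heart of the matter is to check that $\beta=\sup(A\cap B)+1$ lies in $M\cap N$. Since $A\cap B$ is contained in both $A$ and $B$, its closure satisfies $\overline{A\cap B}\subseteq\overline{A}\cap\overline{B}$; and because $A\in M$ and $B\in N$ are countable sets of ordinals, $\overline{A}\subseteq M$ and $\overline{B}\subseteq N$, whence $\overline{A\cap B}\subseteq M\cap N$. As $\sup(A\cap B)\in\overline{A\cap B}$ and both models are closed under successor, $\beta\in M\cap N$, and therefore the $\trianglelefteq$-least injection $g_\beta\colon\beta\to\omega_l$ (exactly as in the proof of Proposition \ref{Partir sucesion}) also belongs to $M\cap N$. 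I would then push the bounded parts down to $\omega_l$: set $A'=g_\beta[A\cap\beta]\in M$ and $B'=g_\beta[B\cap\beta]\in N$, two elements of $[\omega_l]^{\leq\omega}$, apply the induction hypothesis to the pair $A',B'$ to obtain a partition $\{A'_0,A'_1\}\in M$ of $A'$ with $A'_0\in N$ and $A'_1\cap B'=\emptyset$, and pull everything back by setting $A_0=g_\beta^{-1}[A'_0]$ and $A_1=A\setminus A_0$.

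The three required properties are then routine verifications. Because $g_\beta\in M\cap N$ and $A'_0\in M\cap N$ (the partition is in $M$ and $A'_0\in N$), we get $A_0\in M\cap N$, so $\{A_0,A_1\}$ is a partition of $A$ belonging to $M$ with $A_0\in N$, giving item (1). For the disjointness $A_1\cap B=\emptyset$, I would write $A_1=(A\setminus\beta)\cup g_\beta^{-1}[A'_1]$: every element of $A\setminus\beta$ exceeds $\sup(A\cap B)$ and so cannot lie in $B$, while any $g_\beta^{-1}(y)$ with $y\in A'_1$ that happened to lie in $B$ would force $y\in g_\beta[B\cap\beta]=B'$, contradicting $A'_1\cap B'=\emptyset$. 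Finally item (2) costs nothing more: since $A_1\cap B=\emptyset$ we have $A\cap B=A_0\cap B$, and $A_0\cap B\in N$ because $A_0,B\in N$.

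The main obstacle is exactly the membership $\beta\in M\cap N$; without it the projection $g_\beta$ would not be common to the two models and the reduction in $l$ would break down. The closure computation $\overline{A\cap B}\subseteq\overline{A}\cap\overline{B}\subseteq M\cap N$ is what delivers it, and it is essential here to bound the \emph{intersection} $A\cap B$ rather than $A\cap N$: the supremum of $A\cap N$ can escape $N$, so the same trick would fail if applied to $A\cap N$ directly.
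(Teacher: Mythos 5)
Your proof is correct, and every step checks out: the trivial case $A\cap B=\emptyset$ is handled, $\sup(A\cap B)\in\overline{A\cap B}\subseteq\overline{A}\cap\overline{B}\subseteq M\cap N$ uses exactly the closure fact the paper records after its lemma on closures of countable sets of ordinals, the base case is a correct application of Lemma \ref{Lema contencion Hw1} (the only place \textsf{CH} enters), and the verification that $A_{1}=(A\setminus\beta)\cup g_{\beta}^{-1}\left[  A_{1}^{\prime}\right]$ misses $B$ is sound, with item (2) falling out of item (1) via $A\cap B=A_{0}\cap B$. One caveat on the comparison you were asked to survive: this paper does not prove the lemma at all --- it is quoted as Lemma 43 of \cite{UltrafiltersinRandomModel} --- so there is no in-text proof to match against; but your argument is built from precisely the toolkit this paper deploys in Proposition \ref{Partir sucesion} (induction on $l$, base case via \textsf{H}$\left(  \omega_{1}\right)  \cap M\subseteq N$, collapsing a bounded piece below $\omega_{l+1}$ to $\omega_{l}$ via the $\trianglelefteq$-least injection $g_{\beta}$, which lies in every submodel containing $\beta$). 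The one genuine simplification you found, and correctly flagged, is that with a single set $B\in N$ in hand you can take $\beta=\sup\left(  A\cap B\right)  +1\in M\cap N$ directly, whereas in Proposition \ref{Partir sucesion} the analogous ordinal $\beta_{i}$ must be caught inside the model by the stabilization claim along a continuous chain ($\bigcup N_{\eta}\cap A_{i}=\bigcup N_{\eta+1}\cap A_{i}$), since there one intersects $A$ with a model rather than with a set belonging to the model; your closing remark that $\sup\left(  A\cap N\right)$ can escape $N$ is exactly the obstruction that forces the chain argument there and makes it unnecessary here.
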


\qquad\ \qquad\ \ 

The following result allows to \textquotedblleft extend\textquotedblright%
\ decisive partitions.

\begin{proposition}
[\textsf{CH}]Assume $l,n\in\omega$ such that $l>0,$ $\left\langle
M_{0},...,M_{n}\right\rangle $ is a $\delta$-increasing sequence and
$P=\left\langle P_{i}\mid i\leq n\right\rangle $ is a decisive partition of
countable subsets of $\omega_{l}$ for $\left\langle M_{0},...,M_{n}%
\right\rangle .$ Let $N$ be an $l$-model such that $\delta_{n}<\delta_{N},$
$D\in N\cap\left[  \omega_{l}\right]  ^{\leq\omega}$ and $X\in N$ be any set.
There are $L\in N\ $and $R=\left\langle R_{i}\mid i\leq n+1\right\rangle $
such that the following conditions hold: \label{Agrandar Particion}

\begin{enumerate}
\item $L\in$ \textsf{Sub}$\left(  \kappa\right)  ,$ $\delta_{n}<\delta_{L}$
and $X,D\in L.$

\item $R$ is a decisive partition for $\left\langle M_{0},...,M_{n}%
,L\right\rangle .$

\item $R_{i}\preceq P_{i}$ for $i\leq n.$

\item $D\subseteq\bigcup R_{n+1}.$\qquad\qquad\qquad\ \ \ \ \ \ \ \ \ \ 
\end{enumerate}
\end{proposition}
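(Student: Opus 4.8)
The plan is to reduce everything to Corollary \ref{Partir Particion}, which already performs the essential refinement, and then to manufacture the new block $R_{n+1}$ out of the pieces that land inside the freshly produced model together with the uncovered part of $D$. First I would apply Corollary \ref{Partir Particion} to the decisive partition $P=\left\langle P_{i}\mid i\leq n\right\rangle $, the sequence $\left\langle M_{0},...,M_{n}\right\rangle $, the $l$-model $N$, and the parameter $\left\langle X,D\right\rangle $ (which lies in $N$ by the Pairing Axiom, since $X,D\in N$). This yields $L\in N$ and $R^{\prime}=\left\langle R_{i}\mid i\leq n\right\rangle $ with: $L\in$ \textsf{Sub}$\left(\kappa\right)$, $\delta_{n}<\delta_{L}$, and $X,D\in L$; the family $R^{\prime}$ is decisive for $\left\langle M_{0},...,M_{n}\right\rangle $; $R_{i}\preceq_{2}P_{i}$ (hence $R_{i}\preceq P_{i}$) for every $i\leq n$; and, crucially, every $A\in\bigcup_{i\leq n}R_{i}$ satisfies either $A\in L$ or $A\cap L=\emptyset$. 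This already delivers items (1) and (3).

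Next I would build the new block. Set $S=\{A\in\bigcup_{i\leq n}R_{i}\mid A\in L\}$. Since $\bigcup_{i\leq n}R_{i}$ is a partition (by the lemma asserting that the union of a decisive partition is a partition), $S$ is a family of pairwise disjoint sets. Moreover $S$ is a finite set all of whose members belong to $L$, so by elementarity (closure of $L$ under finite pairing) we get $S\in L$, and therefore $T:=\bigcup S\in L$. Putting $D^{\prime}=D\setminus T$, we have $D^{\prime}\in L$ (a difference of elements of $L$), $D^{\prime}$ is finite, and $D^{\prime}$ is disjoint from every member of $S$. I would then define $R_{n+1}=S\cup\{D^{\prime}\}$ and $R=\left\langle R_{0},...,R_{n},R_{n+1}\right\rangle $. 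Then $R_{n+1}\in L$ is a finite partition into countable (indeed finite) subsets of $\omega_{l}$, and $D\subseteq T\cup D^{\prime}=\bigcup R_{n+1}$, which is item (4).

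Finally I would check that $R$ is decisive for $\left\langle M_{0},...,M_{n},L\right\rangle $, i.e. item (2). The first decisiveness clause holds because $R_{i}\in M_{i}$ for $i\leq n$ (inherited from $R^{\prime}$) and $R_{n+1}\in L$. For the second clause, the pairs $i<j\leq n$ are exactly the decisiveness of $R^{\prime}$; for the genuinely new pairs $i\leq n<j=n+1$ (whose ambient model is $L$), take $A\in R_{i}\subseteq\bigcup_{k\leq n}R_{k}$. By the dichotomy from Corollary \ref{Partir Particion}, either $A\cap L=\emptyset$, and we are done, or $A\in L$, in which case $A\in S\subseteq R_{n+1}$, which is precisely what decisiveness demands at the index $n+1$. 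There are no pairs with $i=n+1$, so every case is covered.

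The main obstacle here is conceptual rather than computational: one must recognize that the \textquotedblleft inside $L$ or disjoint from $L$\textquotedblright\ dichotomy output by Corollary \ref{Partir Particion} is exactly the property that makes the appended block $R_{n+1}$ decisive, and that $D$ should be absorbed by splitting it along $T=\bigcup S$ so that the leftover piece $D^{\prime}$ stays disjoint from all previously decided sets while remaining an element of $L$. The only point requiring genuine care is verifying that $S$, and hence $R_{n+1}$, actually belongs to $L$; this rests on the closure of elementary submodels under finite pairing, and is what lets the recursion that uses this proposition proceed.
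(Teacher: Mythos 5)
Your proposal is correct, and it takes a genuinely shorter route than the paper at one point. Both arguments start the same way: apply Corollary \ref{Partir Particion} with the single parameter $\langle X,D\rangle$ to obtain $L\in N$ and a decisive $2$-refinement of $P$ whose blocks are each in $L$ or disjoint from $L$, and then form $R_{n+1}$ from the blocks lying in $L$ together with one leftover piece of $D$. The paper, however, inserts an intermediate step that you omit: it splits each block $C$ once more, via Lemma \ref{partir conjunto en dos}, into $C_{0}\in L$ and $C_{1}$ with $C_{1}\cap D=\emptyset$ (using Lemma \ref{Lema potencia contenida} to keep the pieces in the right $M_{i}$), precisely so that the part of $D$ meeting the blocks is absorbed by blocks belonging to $L$ and the leftover block of $D$ is disjoint from everything else. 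Your observation is that this second round of splitting is unnecessary here: $D$, and likewise $D^{\prime}=D\setminus\bigcup S$, is a countable element of $L$ and hence a subset of $L$, so the dichotomy already supplied by Corollary \ref{Partir Particion} confines $D$ to blocks in $L$, and $D^{\prime}$ is automatically disjoint from any block avoiding $L$; decisiveness at the new index then follows exactly as you say. This simplification is sound, and it even yields the stronger conclusion $R_{i}\preceq_{2}P_{i}$ for $i\leq n$, where the paper's extra split only guarantees $R_{i}\preceq P_{i}$ (its $P_{i}$-blocks may break into four pieces). What the paper's extra step would buy is robustness in a hypothetical variant where $D\in L$ does not imply $D\subseteq L$ (e.g., an uncountable $D$); for the statement as given, with $D\in\left[\omega_{l}\right]^{\leq l}$, your shortcut is fully justified. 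One small slip worth fixing: the members of $S$ are countable but not \textquotedblleft indeed finite\textquotedblright\ --- only $D^{\prime}$ is finite --- though this is harmless, since the definition of decisive partition requires only countable blocks.
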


\begin{proof}
Apply Corollary \ref{Partir Particion} to find $Q=\left\langle Q_{0}%
,...,Q_{n}\right\rangle $ and $L\in N$ such that:

\begin{enumerate}
\item $L\in$ \textsf{Sub}$\left(  \kappa\right)  ,$ $\delta_{n}<\delta_{L}$
and $X,D\in L.$

\item $Q$ is decisive for $\left\langle M_{0},...,M_{n}\right\rangle .$

\item $Q_{i}\preceq_{2}P_{i}$ for $i\leq n.$

\item Every element of $Q_{i}$ is in $L$ or is disjoint from it.
\end{enumerate}

\qquad\ \ \ 

Let $C\in\bigcup\limits_{i\leq n}Q_{i}$ and $j$ is the minimal such that $C\in
Q_{j}.$ We now invoke Lemma \ref{partir conjunto en dos} to find $C_{0},C_{1}$
with the following properties:

\begin{enumerate}
\item $\left\{  C_{0},C_{1}\right\}  \in M_{j}$ is a partition of $C.$

\item $C_{0}\in L$ and $C_{1}\cap D=\emptyset.$
\end{enumerate}

\qquad\ \qquad\ \ 

For $i\leq n$ define $R_{i}=\left\{  C_{u}\mid C\in Q_{i}\wedge u\in2\right\}
$ and\newline$R_{n+1}=\left(  (\bigcup\limits_{i\leq n}R_{i})\cap L\right)
\cup\{D\setminus\bigcup\limits_{i\leq n}R_{i}\}.$ We claim these items are as
desired. For convenience, denote $M_{n+1}=L.$

\begin{claim}
\qquad\ \ \ \ \ \ \ \ \qquad\ \ \ \ \ \ \ \ \qquad\ \ \ \ \ \ \ 

\begin{enumerate}
\item If $i\leq n,$ then $R_{i}$ is a partition and $R_{i}\in M_{i}$.

\item If $i<j\leq n+1$ and $B\in R_{i},$ then either $B\in R_{j}$ or $B\cap
M_{j}=\emptyset.$

\item $R_{n+1}\in M_{n+1},$ is a partition and $D\subseteq\bigcup R_{n+1}.$
\end{enumerate}
\end{claim}

\qquad\ \ \qquad\ \ 

We prove the first point. Clearly $R_{i}$ is a partition. We now argue that it
is in $M_{i}.$ Let $C\in Q_{i}$, we need to show that $C_{0},C_{1}\in M_{i}.$
Let $j\leq i$ be the first one for which $C\in Q_{j}.$ Since $C\in M_{i}\cap
M_{j}$ and $\delta_{j}\leq\delta_{i},$ it follows by Lemma
\ref{Lema potencia contenida} that $\mathcal{P}\left(  C\right)  \cap
M_{j}\subseteq M_{i},$ so $C_{0},C_{1}\in M_{i}.$

\qquad\qquad\qquad

To prove the second point, first consider the case that $j<n+1.$ Since $B\in
R_{i},$ then $B=C_{u}$ for some $C\in Q_{i}$ and $u\in2.$ If $C\in Q_{j},$
then $C_{u}\in R_{j}$ by definition. In case $C\notin Q_{j},$ we know that
$C\cap M_{j}=\emptyset,$ hence $C_{u}\cap M_{j}=\emptyset.$ In case $j=n+1,$
for $C_{u}\in R_{i},$ we know that either $C\cap L=\emptyset$ or $C\in L.$ In
the latter case, $C_{u}$ is also in $L$ by Lemma \ref{Lema potencia contenida}%
. The third point is trivial.
\end{proof}

\qquad\qquad\ \ \ \ \qquad\ \ 

It is worth noting that for our application, the set $D$ above is unnecessary
(equivalently, we only need the case $D=\emptyset$). However, we wrote the
result in that stronger form for potential future applications. The following
result was proved in \cite{PFiltersCohenRandomLaver} for the case $l=2,$ but
the proof generalizes to any $l\in\omega$ with $l>0$. For completeness, we
include a brief sketch of the argument.

\begin{lemma}
[\textsf{CH}]Assume $l,n\in\omega$ with $l>0,$ $\left\langle M_{0}%
,...,M_{n}\right\rangle $ is a $\delta$-increasing sequence, $P=\left\langle
P_{i}\mid i\leq n\right\rangle $ is a decisive partition for $\left\langle
M_{0},...,M_{n}\right\rangle $ and $\gamma\in M_{n}\cap\omega_{l}.$ There is a
permutation $\triangle:\omega_{l}\longrightarrow\omega_{l}$ with the following
properties: \label{Permutacion}

\begin{enumerate}
\item If $A\in P_{n},$ then $\triangle\upharpoonright A$ is the identity.

\item If $A\in\bigcup\limits_{i<n}P_{i},$ then:

\begin{enumerate}
\item $\triangle\upharpoonright A$ is increasing.

\item $\triangle\left[  A\right]  \in M_{n}.$

\item $\triangle\left[  A\right]  \cap\gamma=\emptyset.$
\end{enumerate}
\end{enumerate}
\end{lemma}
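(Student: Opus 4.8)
The plan is to build $\triangle$ explicitly, moving only the ``low'' sets that miss $M_n$ and sending each of them order-isomorphically onto a fresh countable subset of $M_n$ lying above $\gamma$, while fixing $\bigcup P_n$ pointwise. First I would organize the sets using the defining dichotomy of a decisive partition: every $A\in\bigcup_{i<n}P_i$ either belongs to $P_n$ or satisfies $A\cap M_n=\emptyset$, and each such $A$ lies in $M_i$ (as $P_i\in M_i$ is finite), hence $A\subseteq M_i$ since $A$ is countable. Let $\mathcal{D}=\{A\in\bigcup_{i<n}P_i\mid A\cap M_n=\emptyset\}$ be the finite family of ``moving'' sets; these are pairwise disjoint (because $\bigcup_{i\le n}P_i$ is a partition) and each is disjoint from $M_n$. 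Condition (1) will be arranged by letting $\triangle$ fix $\bigcup P_n$ pointwise, and the content of condition (2) concerns exactly the sets of $\mathcal{D}$.

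The heart of the argument is a reflection step producing the targets inside $M_n$. For each $A\in\mathcal{D}$ fix $i<n$ with $A\in P_i\subseteq M_i$; then \textsf{OT}$(A)\in M_i$ by elementarity, and since \textsf{OT}$(A)<\omega_1$ we get \textsf{OT}$(A)\in M_i\cap\omega_1=\delta_{M_i}\le\delta_{M_n}\subseteq M_n$. Thus every order type $\rho_A=$ \textsf{OT}$(A)$ is an element of $M_n$, as are $\gamma$, $\omega_l$, and $\bigcup P_n$ (the last since $P_n\in M_n$). Since $\omega_l$ is regular and uncountable, the set $\omega_l\setminus(\gamma\cup\bigcup P_n)$ (obtained by deleting an initial segment and a countable set) still has order type $\omega_l$, so in $V$ there exist pairwise disjoint $T_A\subseteq\omega_l\setminus(\gamma\cup\bigcup P_n)$ with \textsf{OT}$(T_A)=\rho_A$. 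This existential statement has all of its parameters in $M_n$, so by elementarity the witnesses can be found inside $M_n$; being countable elements of $M_n$, each resulting $T_A$ satisfies $T_A\in M_n$, $T_A\subseteq M_n$, $T_A\cap\gamma=\emptyset$, $T_A\cap\bigcup P_n=\emptyset$, and the $T_A$ remain pairwise disjoint.

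It remains to assemble the permutation. Put $S=\bigcup\mathcal{D}$ and $T=\bigcup_{A\in\mathcal{D}}T_A$; since $S\cap M_n=\emptyset$ and $T\subseteq M_n$, we have $S\cap T=\emptyset$. Define $\triangle$ on each $A\in\mathcal{D}$ to be the unique increasing bijection $A\to T_A$ (which exists because \textsf{OT}$(A)=$ \textsf{OT}$(T_A)$), let $\triangle$ carry $T$ onto $S$ by any fixed bijection, and let $\triangle$ be the identity on $\omega_l\setminus(S\cup T)$; this is a permutation of $\omega_l$. Because $\bigcup_{i<n}P_i\subseteq\bigcup P_n\cup S$ while $T$ is disjoint from both $\bigcup P_n$ and $S$, the set $T$ meets none of $\bigcup_{i\le n}P_i$, so the choice of $\triangle$ on $T$ is harmless. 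Since $\bigcup P_n$ is disjoint from $S\cup T$, condition (1) holds, and conditions 2(a)--(c) hold for each $A\in\mathcal{D}$ directly by construction.

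The main obstacle is precisely this reflection step: arranging the targets $T_A$ to be genuine \emph{elements} of $M_n$ (not merely subsets), simultaneously pushed above $\gamma$, kept off $\bigcup P_n$, and pairwise disjoint. The crucial enabling observation is that each order type \textsf{OT}$(A)$ already lies in $M_n$ even though $A$ itself is disjoint from $M_n$; this is what lets elementarity of $M_n$ place the targets exactly where the three requirements demand.
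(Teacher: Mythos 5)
Your proof is correct, and its skeleton agrees with the paper's: fix $\bigcup P_{n}$ pointwise, use decisiveness to see that every other block either lies in $P_{n}$ or is disjoint from $M_{n}$, and map each moving block by its unique increasing bijection onto a target that is an \emph{element} of $M_{n}$ avoiding $\gamma\cup\bigcup P_{n}$; both arguments pivot on exactly the observation you isolate, namely that \textsf{OT}$\left(  A\right)  $ is a countable ordinal of $M_{i}$, hence below $\delta_{M_{i}}\leq\delta_{M_{n}}$, hence in $M_{n}$. The genuine difference is how the targets are produced. The paper is constructive: after enlarging $\gamma$ so that $\bigcup P_{n}\subseteq\gamma$, it chooses an indecomposable $\beta\in M_{n}\cap\omega_{l}$ above $\gamma$ and above all the order types $\varepsilon_{j}$, and sends the $j$-th block onto the interval $[\beta\left(  j+1\right)  ,\beta\left(  j+1\right)  +\varepsilon_{j})$, whose membership in $M_{n}$, disjointness from $\gamma$, and pairwise disjointness are read off from ordinal arithmetic and the indecomposability of $\beta$. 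You instead reflect into $M_{n}$ the existential statement \textquotedblleft there exist pairwise disjoint sets of the prescribed order types inside $\omega_{l}\setminus(\gamma\cup\bigcup P_{n})$\textquotedblright, having checked that all parameters ($\gamma$, $\bigcup P_{n}$, the order types) lie in $M_{n}$; this is shorter, dispenses with indecomposable ordinals, and absorbs the paper's \textquotedblleft increase $\gamma$\textquotedblright\ step into the excluded set. In exchange, the paper's explicit intervals make the $M_{n}$-membership of the images visible at a glance, with no formula to reflect. You are also more careful on two points the paper leaves implicit: the extension to an actual permutation (your explicit swap of $S$ with $T$, together with the check that $T$ meets no block of any $P_{i}$, whereas the paper simply says $\triangle_{0}$ can be extended), and the degenerate overlap $P_{n}\cap\bigcup_{i<n}P_{i}$. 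On the latter, note that your reading of clause 2 as pertaining only to blocks disjoint from $M_{n}$ is exactly what the paper's own proof delivers (it too only moves $R=\bigcup_{i<n}P_{i}\setminus P_{n}$): clause 2(c) taken literally conflicts with clause 1 on that overlap, and the application in Proposition \ref{extender nice pattern} only ever uses 2(c) for blocks outside the top partition, so this is an imprecision of the statement rather than a gap in either proof.
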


\begin{proof}
By increasing $\gamma$ if necessary, we may assume that $\bigcup
P_{n}\subseteq\gamma.$ Let $R=\bigcup\limits_{i<n}P_{i}\setminus P_{n}$ and
take an enumeration $R=\left\{  A_{j}\mid j<m\right\}  .$ For each $j<m,$
denote $\varepsilon_{j}=$ \textsf{OT}$\left(  A_{j}\right)  $ and $e_{j}%
:A_{j}\longrightarrow\varepsilon_{j}$ the unique isomorphism. Choose $\beta\in
M_{n}\cap\omega_{l}$ an indecomposable ordinal such that $\gamma
,\varepsilon_{0},...,\varepsilon_{m}<\beta.$ Define $\triangle_{0}:$
$\bigcup\limits_{i\leq n}P_{i}\longrightarrow\omega_{l}$ as follows:

\begin{enumerate}
\item If $A\in P_{n},$ then $\triangle_{0}\upharpoonright A$ is the identity.

\item For every $i<m$ and $\alpha\in A_{j},$ we have that $\triangle
_{0}\left(  \alpha\right)  =\beta\left(  j+1\right)  +e_{j}\left(
\alpha\right)  .$
\end{enumerate}

\qquad\qquad\ \ \ \ \qquad\ \ 

It is easy to see that $\triangle_{0}$ is injective, $\triangle_{0}\left[
A_{j}\right]  \in M_{n}$ for every $j<m$ (this is because $\triangle
_{0}\left[  A_{j}\right]  $ is just the interval $[\beta\left(  j+1\right)
,\varepsilon_{j})$). We can now extend $\triangle_{0}$ to a permutation.
\end{proof}

\section{Ultrafilters and Towers \label{Seccion ultrafiltros y torres}}

For this section, $\mathbb{P}\left(  I\right)  $ will denote either
$\mathbb{C}\left(  I\right)  $ or $\mathbb{B}\left(  I\right)  .$ We fix
$l\in\omega$ with $l>1$ and $\kappa$ a large enough regular cardinal. We will
use the technology developed in the previous section to prove that after
adding less than $\aleph_{\omega}$ Cohen or random reals to a model of
\textsf{CH}, there is an ultrafilter that intersects every increasing tower.
We start with a technical definition.

\begin{definition}
We say that $\left(  \overline{M},\overline{W},\overline{P}\right)  $ is a
\emph{nice pattern }if the following conditions hold:

\begin{enumerate}
\item $\overline{M}=\left\langle M_{0},...,M_{n}\right\rangle $ is a $\delta
$-increasing sequence of elements of \textsf{Sub}$\left(  \kappa\right)  .$

\item $\overline{W}=\langle\dot{W}_{0},....,\dot{W}_{n}\rangle$ is a sequence
of $\mathbb{P}\left(  \omega_{l}\right)  $-names of subsets of $\omega$ such
that $\dot{W}_{i}\in M_{i}$ for each $i\leq n.$

\item $P=\left\langle P_{i}\mid i\leq n\right\rangle $ is a decisive partition
for $\left\langle M_{0},...,M_{n}\right\rangle .$

\item $\dot{W}_{i}$ is a $\mathbb{P(}\bigcup P_{i}\mathbb{)}$-name (for $i\leq
n$).

\item $\mathbb{P}\left(  \omega_{l}\right)  $ forces that $\dot{W}_{0}%
\cap....\cap\dot{W}_{n}$ is infinite.
\end{enumerate}
\end{definition}

\qquad\ \qquad\ \ \ 

We will need another result from \cite{UltrafiltersinRandomModel}, which is
Proposition 51 of that paper.

\begin{proposition}
[\textsf{CH}]Let $M,N\in$ \textsf{Sub}$\left(  \kappa\right)  $ with
$\delta_{M}\leq\delta_{N},$ $I\in M\cap\left[  \omega_{l}\right]  ^{\omega}$
and $\dot{a}\in M$ a $\mathbb{P}\left(  I\right)  $-name for a subset of
$\omega.$ If $\triangle:\omega_{l}\longrightarrow\omega_{l}$ a permutation for
which there is $P\in M$ a finite partition of $I$ such that for every $A\in P$
we have that $\triangle\upharpoonright A$ is order preserving and
$\triangle\left[  A\right]  \in N,$ then $\triangle_{\ast}\left(  \dot
{a}\right)  $ is equivalent to a name in $N.$\label{prop mandar nombres}\qquad\ \ \ 
\end{proposition}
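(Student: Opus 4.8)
The plan is to reduce everything to the single bijection $\triangle\upharpoonright I:I\longrightarrow J$, where $J=\triangle[I]$, and then to route the name $\dot{a}$ through a hereditarily countable copy of its forcing so that the absoluteness supplied by Lemma \ref{Lema contencion Hw1} can carry it from $M$ into $N$. First I would note that, since $\dot{a}$ is a $\mathbb{P}(I)$-name with support $I$, the value $\triangle_{\ast}(\dot a)$ depends only on $\triangle\upharpoonright I$ (this is the content of Lemma \ref{automorfismo coinciden} together with its random analogue), so it suffices to work with $\triangle\upharpoonright I$. Writing $P=\{A_{0},\dots,A_{k-1}\}$, each $A_{j}\in M$ and $\triangle[A_{j}]\in N$, whence $J=\bigcup_{j<k}\triangle[A_{j}]\in N$ and $\triangle_{\ast}(\dot a)$ is a $\mathbb{P}(J)$-name; the objective is to produce an equivalent name lying in $N$.

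The heart of the argument is a canonicalization that respects the partition. For each $j<k$ set $\theta_{j}=\textsf{OT}(A_{j})=\textsf{OT}(\triangle[A_{j}])$ (equal because $\triangle\upharpoonright A_{j}$ is order preserving), and let $e_{j}:A_{j}\to\theta_{j}$ and $e'_{j}:\triangle[A_{j}]\to\theta_{j}$ be the order isomorphisms, so that $e_{j}\in M$, $e'_{j}\in N$, and $\triangle\upharpoonright A_{j}=(e'_{j})^{-1}\circ e_{j}$. I would then put $\hat{I}=\bigcup_{j<k}(\{j\}\times\theta_{j})$, a hereditarily countable set, and define $E:I\to\hat{I}$ by $E(\alpha)=(j,e_{j}(\alpha))$ for $\alpha\in A_{j}$, and $E':J\to\hat{I}$ by $E'(\beta)=(j,e'_{j}(\beta))$ for $\beta\in\triangle[A_{j}]$. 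By construction $E\in M$, $E'\in N$, and $(E')^{-1}\circ E=\triangle\upharpoonright I$. Moreover $\hat{I}\in \textsf{H}$$\left(\omega_{1}\right)\cap M$, so $\hat{I}\in N$ by Lemma \ref{Lema contencion Hw1}, and therefore the canonical $\trianglelefteq$-least enumeration of $\hat{I}$ belongs to both $M$ and $N$.

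Now I would transport $\dot{a}$ across $E$. Since $E$ is a bijection, $E_{\ast}$ is an isomorphism $\mathbb{P}(I)\to\mathbb{P}(\hat{I})$, and, the extension to names being definable, $E_{\ast}(\dot a)\in M$ is a $\mathbb{P}(\hat{I})$-name. Because $\hat{I}$ is countable, $2^{\hat{I}}$ is homeomorphic to $2^{\omega}$ via the canonical enumeration, so each Boolean value of $n\in E_{\ast}(\dot a)$ is a Baire (hence Borel) set coded by a real, and the whole name is coded by a single real $c\in \textsf{H}$$\left(\omega_{1}\right)\cap M$; thus $c\in N$ by Lemma \ref{Lema contencion Hw1}. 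Inside $N$, decoding $c$ with the same canonical enumeration yields a $\mathbb{P}(\hat{I})$-name $\dot b\in N$ equivalent to $E_{\ast}(\dot a)$, and applying the isomorphism $((E')^{-1})_{\ast}\in N$ gives a $\mathbb{P}(J)$-name in $N$. Using $((E')^{-1})_{\ast}\circ E_{\ast}=((E')^{-1}\circ E)_{\ast}=(\triangle\upharpoonright I)_{\ast}$, this name is equivalent to $(\triangle\upharpoonright I)_{\ast}(\dot a)=\triangle_{\ast}(\dot a)$, as desired.

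The step I expect to be the main obstacle is making the passage ``code the name by a real'' fully rigorous and genuinely internal to the two models: one must fix a representation of $\mathbb{P}(\hat{I})$-names by Borel codes that is absolute between $M$ and $N$, verify that coding and decoding are definable from $\hat{I}$ and $\trianglelefteq$ (so that $c\in M$ really yields $\dot b\in N$), and confirm that $\dot b$ is forcing-equivalent to $E_{\ast}(\dot a)$, not merely identical representative-by-representative. Everything else — the composition identity $((E')^{-1})_{\ast}\circ E_{\ast}=((E')^{-1}\circ E)_{\ast}$ and the reduction to the support — is routine; but the whole argument hinges on the precise interplay between the order-preserving hypothesis (which forces $E$ and $E'$ to land in one common $\hat{I}$) and the hypothesis $\triangle[A]\in N$ (which places $E'$ and $\hat{I}$ inside $N$).
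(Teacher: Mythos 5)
Your proof is correct: the reduction to $\triangle\upharpoonright I$, the piecewise order isomorphisms $e_{j},e'_{j}$ into the common countable template $\hat{I}$ (which works precisely because $\triangle\upharpoonright A_{j}$ is order preserving and $\triangle\left[  A_{j}\right]  \in N$), the coding of the $\mathbb{P}(\hat{I})$-name by a real so that Lemma \ref{Lema contencion Hw1} transfers it from $M$ to $N$, and the pull-back through $((E')^{-1})_{\ast}$ all go through, and the coding step you flag is handled exactly as you suggest — by definability of the (de)coding from $\hat{I}$ and $\trianglelefteq$ together with elementarity in \textsf{H}$\left(  \kappa\right)  $, no transitivity being needed, with equivalence (forced equality via matching Boolean values) rather than literal identity of names. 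Be aware that this paper gives no proof of the proposition — it is quoted as Proposition 51 of \cite{UltrafiltersinRandomModel} — but your argument is essentially the intended one for which the surrounding machinery (Lemma \ref{automorfismo coinciden}, whose random analogue you correctly note must be supplied, and Lemma \ref{Lema contencion Hw1} under \textsf{CH}) was quoted.
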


\qquad\ \ \ \ \ \ \ 

We now have the following:

\begin{proposition}
[\textsf{CH}]Let $\left(  \overline{M},\overline{W},\overline{P}\right)  $ be
a nice pattern ($\overline{M}=\left\langle M_{0},...,M_{n}\right\rangle ,$
$\overline{W}=\langle\dot{W}_{0},....,\dot{W}_{n}\rangle,$ $P=\left\langle
P_{i}\mid i\leq n\right\rangle $), $\mathcal{A}=\{\dot{A}_{\alpha}\mid
\alpha\in\omega_{1}\}$ be a $\mathbb{P}\left(  \omega_{l}\right)  $-name for
an increasing tower, $N$ an $l$-model such that $\mathcal{A}\in N,$
$\delta_{n}<\delta_{N}$ and $X\in N$ be any set. There are $L\in N,$ $\beta
\in\omega_{1}$ and $R=\left\langle R_{i}\mid i\leq n+1\right\rangle $ with the
following properties: \label{extender nice pattern}

\begin{enumerate}
\item $L\in$ \textsf{Sub}$\left(  \kappa\right)  ,$ $\delta_{n}<\delta_{L}$
and $\beta,\mathcal{A},X\in L.$

\item $R_{i}\preceq P_{i}$ for every $i\leq n.$

\item $(M_{0},...,M_{n},L,\dot{W}_{0},....,\dot{W}_{n},\dot{A}_{\beta},R)$ is
a nice pattern.
\end{enumerate}
\end{proposition}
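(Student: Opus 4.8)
The plan is to extend the pattern by locating inside $N$ a suitable tower element $\dot{A}_{\beta}$ and then invoking Proposition \ref{Agrandar Particion} to produce the model $L$ and the refined-and-enlarged partition $R$ in one stroke. Concretely, I would first find $\beta<\delta_{N}$ so that $\mathbb{P}\left( \omega_{l}\right)$ forces $\dot{W}_{0}\cap\dots\cap\dot{W}_{n}\cap\dot{A}_{\beta}$ to be infinite (this is the only genuinely new content, and it is exactly clause (5) of the target nice pattern); once $\beta$ is fixed, I would let $D\in N$ be a countable support of $\dot{A}_{\beta}$ and feed $P$, $N$, $D$ and a code for $\left( X,\beta,\mathcal{A}\right)$ into Proposition \ref{Agrandar Particion}. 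That proposition returns $L\in N$ with $\delta_{n}<\delta_{L}$ and $X,\beta,\mathcal{A}\in L$, together with $R=\left\langle R_{i}\mid i\leq n+1\right\rangle$ which is decisive for $\left\langle M_{0},\dots,M_{n},L\right\rangle$, satisfies $R_{i}\preceq P_{i}$ for $i\leq n$, and has $D\subseteq\bigcup R_{n+1}$. These are precisely clauses (1)--(2) of the statement, and they also hand us most of what being a nice pattern requires.

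Finding $\beta$ is the crux. Each $\dot{W}_{i}$ is a $\mathbb{P}\left( \bigcup P_{i}\right)$-name for a subset of $\omega$ with countable support, so under \textsf{CH} it is (equivalent to) a nice name coded by a real, hence lies in $\textsf{H}\left( \omega_{1}\right) \cap M_{i}$. Since $\delta_{M_{i}}\leq\delta_{n}<\delta_{N}$, Lemma \ref{Lema contencion Hw1} gives $\textsf{H}\left( \omega_{1}\right) \cap M_{i}\subseteq N$, so each $\dot{W}_{i}$ (or an equivalent) belongs to $N$, and therefore so does the name $\dot{W}=\dot{W}_{0}\cap\dots\cap\dot{W}_{n}$, which is forced to be infinite. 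Because $\mathcal{A}\in N$ is forced to be an increasing tower, working inside $N$ I would form the name $\dot{\beta}^{\ast}$ for the least $\gamma$ with $\dot{A}_{\gamma}\cap\dot{W}$ infinite; the increasing-tower property guarantees such a $\gamma$ exists, and since $A_{\gamma'}\subseteq^{\ast}A_{\gamma}$ for $\gamma'<\gamma$ the set of such $\gamma$ is a tail of $\omega_{1}$. As $\mathbb{P}\left( \omega_{l}\right)$ is ccc it adds no new countable ordinals, so $N$ sees some $\beta<\omega_{1}$ with $\Vdash\dot{\beta}^{\ast}<\beta$; elementarity places this $\beta$ below $\delta_{N}$, and then $\Vdash\dot{A}_{\beta}\cap\dot{W}$ is infinite, which is exactly clause (5).

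It remains to verify that $\left( M_{0},\dots,M_{n},L,\dot{W}_{0},\dots,\dot{W}_{n},\dot{A}_{\beta},R\right)$ is a nice pattern. The sequence $\left\langle M_{0},\dots,M_{n},L\right\rangle$ is $\delta$-increasing because $\delta_{n}<\delta_{L}$; the names satisfy $\dot{W}_{i}\in M_{i}$ and $\dot{A}_{\beta}\in L$ (as $\beta,\mathcal{A}\in L$); $R$ is decisive for $\left\langle M_{0},\dots,M_{n},L\right\rangle$ by Proposition \ref{Agrandar Particion}; each $\dot{W}_{i}$ is still a $\mathbb{P}\left( \bigcup R_{i}\right)$-name since $R_{i}\preceq P_{i}$ forces $\bigcup R_{i}=\bigcup P_{i}$; the name $\dot{A}_{\beta}$ is a $\mathbb{P}\left( \bigcup R_{n+1}\right)$-name because its support $D$ is contained in $\bigcup R_{n+1}$; and the forcing clause (5) is the content of the previous paragraph. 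Note that no disjointness among the supports $\bigcup R_{i}$ is needed---the decisiveness of $R$ is exactly the bookkeeping that replaces it---so the permutation machinery of Lemma \ref{Permutacion} is not required at this step.

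The main obstacle is the second paragraph, and specifically the passage $\dot{W}_{i}\in N$: it is what lets the threshold name $\dot{\beta}^{\ast}$ be computed inside $N$ and hence be bounded by an ordinal below $\delta_{N}$. This is the one place where both \textsf{CH} (to code the countably supported names as reals) and the height comparison $\delta_{M_{i}}<\delta_{N}$ (through Lemma \ref{Lema contencion Hw1}) are essential; everything after it is bookkeeping carried out by Proposition \ref{Agrandar Particion}.
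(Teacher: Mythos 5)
There is a genuine gap, and it sits exactly where you locate the crux. Your claim that each $\dot{W}_{i}$ lies in \textsf{H}$\left(  \omega_{1}\right)  \cap M_{i}$ and therefore transfers to $N$ via Lemma \ref{Lema contencion Hw1} is false: $\dot{W}_{i}$ is a $\mathbb{P}\left(  \bigcup P_{i}\right)  $-name, and $\bigcup P_{i}$ is a countable subset of $\omega_{l}$ that in general contains ordinals $\geq\omega_{1}$ (recall $l>1$). Any representative of such a name has those ordinals in its transitive closure, so it is not hereditarily countable. Coding it by a real does not rescue the step, because the real only describes the name relative to an enumeration of its support; for $N$ to decode it, $N$ would need the support $\bigcup P_{i}$ itself as an element, and Lemma \ref{Lema contencion Hw1} transfers only members of \textsf{H}$\left(  \omega_{1}\right)  $ --- countable sets of large ordinals belonging to $M_{i}$ need not belong to $N$. (If they always did, the entire apparatus of decisive partitions and Proposition \ref{Partir sucesion} would be pointless.) Consequently the name $\dot{W}=\dot{W}_{0}\cap\dots\cap\dot{W}_{n}$ need not be available in $N$, your threshold name $\dot{\beta}^{\ast}$ cannot be computed inside $N$, and the choice of $\beta$ --- which you correctly identify as the only genuinely new content --- is not obtained. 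Your closing remark that \textquotedblleft the permutation machinery of Lemma \ref{Permutacion} is not required at this step\textquotedblright\ inverts the actual situation: that machinery exists precisely to repair this step.

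For comparison, the paper's proof runs as follows. It first applies Proposition \ref{Agrandar Particion} (with $D=\emptyset$) to obtain $L$ and $R$, arranging that every piece of $\bigcup\limits_{i\leq n}R_{i}$ is either in $L$ or disjoint from $L$, with $R_{n+1}$ consisting of the pieces lying in $L$; it fixes $\gamma\in L\cap\omega_{l}$ bounding supports of all the $\dot{A}_{\alpha}$, and then invokes Lemma \ref{Permutacion} to produce a permutation $\triangle$ which is the identity on $\bigcup R_{n+1}$ and maps each remaining piece $B$ increasingly onto a set $\triangle\left[  B\right]  \in L$ with $\triangle\left[  B\right]  \cap\gamma=\emptyset$. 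Proposition \ref{prop mandar nombres} then places $\triangle_{\ast}(\dot{W}_{i})$ in $L$, where ccc-ness and elementarity (your tail argument, applied now to names that really are in $L$) yield $\beta\in L\cap\omega_{1}$ with $\bigcap\limits_{i\leq n}\triangle_{\ast}(\dot{W}_{i})\cap\dot{A}_{\beta}$ forced infinite. A second permutation $\sigma$, agreeing with $\triangle$ on all the partition pieces but fixing \textsf{sup}$(\dot{A}_{\beta})$ pointwise (injectivity of the combined map uses $\triangle\left[  B\right]  \cap\gamma=\emptyset$), satisfies $\sigma_{\ast}(\dot{A}_{\beta})=\dot{A}_{\beta}$ and $\sigma_{\ast}(\dot{W}_{i})=\triangle_{\ast}(\dot{W}_{i})$ by Lemma \ref{automorfismo coinciden}, so applying the automorphism backwards (Proposition \ref{Prop formula isomorfismo}) gives clause (5) for the original names; finally $R_{n+1}$ is enlarged by $I=$ \textsf{sup}$(\dot{A}_{\beta})\setminus\bigcup R_{n+1}$. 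Your $D$-based bookkeeping in Proposition \ref{Agrandar Particion} is a reasonable substitute for that last enlargement, but only if $\beta$ could be found before $L$ --- and by the first paragraph, it cannot.
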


\begin{proof}
By Proposition \ref{Agrandar Particion} (with $D=\emptyset$) we know there are
$L\in N$ a submodel and $R=\left\langle R_{i}\mid i\leq n+1\right\rangle $
with the following properties:

\begin{enumerate}
\item $\delta_{n}<\delta_{L}$ and $X,\mathcal{A}\in L.$

\item $R$ is a decisive partition for $\left\langle M_{0},...,M_{n}%
,L\right\rangle .$

\item $R_{i}\preceq P_{i}$ for every $i\leq n.$
\end{enumerate}

\qquad\qquad\ \ 

We can assume that $R_{n+1}$ consists exactly of the elements of
$\bigcup\limits_{i\leq n}R_{i}$ that are in $L.$ Since $\mathcal{A}\in L$ and
each $\dot{A}_{\alpha}$ has a countable support, we can find $\gamma\in
L\cap\omega_{l}$ such that for every $\alpha\in\omega_{1},$ the name $\dot
{A}_{\alpha}$ has a support contained in $\gamma$ (recall that $l>1$). We can
now apply Lemma \ref{Permutacion} and find a $\triangle:\omega_{l}%
\longrightarrow\omega_{l}$ with the following properties:

\begin{enumerate}
\item If $B\in R_{n+1},$ then $\triangle\upharpoonright B$ is the identity mapping.

\item If $B\in\bigcup\limits_{i\leq n}R_{i},$ then the following holds:

\begin{enumerate}
\item $\triangle\upharpoonright B$ is increasing.

\item $\triangle\left[  B\right]  \in L.$

\item $\triangle\left[  B\right]  \cap\gamma=\emptyset.$
\end{enumerate}
\end{enumerate}

\qquad\ \ 

By Proposition \ref{prop mandar nombres}, we conclude that $\triangle_{\ast
}(\dot{W}_{0}),...,\triangle_{\ast}(\dot{W}_{n})\in L.$ Since $\mathbb{P}%
\left(  \omega_{l}\right)  $ forces that $\dot{W}_{0}\cap....\cap\dot{W}_{n}$
is infinite and $\triangle_{\ast}$ is an automorphism, we get that
$\triangle_{\ast}(\dot{W}_{0})\cap...\cap\triangle_{\ast}(\dot{W}_{n})$ is
also forced to be infinite (see Proposition \ref{Prop formula isomorfismo}).
Now, since $\mathcal{A}\in L,$ $\mathbb{P}\left(  \omega_{l}\right)  $ is ccc
and $\mathcal{A}$ is forced to be an increasing tower, \ we know that there is
$\beta\in L\cap\omega_{1}$ for which $\mathbb{P}\left(  \omega_{l}\right)  $
forces that $\bigcap\limits_{i\leq n}\triangle_{\ast}(\dot{W}_{i})\cap\dot
{A}_{\beta}$ is infinite. Let \textsf{sup}$(\dot{A}_{\beta})\in L$ be a
countable support of $\dot{A}_{\beta}$ and $I=$ \textsf{sup}$(\dot{A}_{\beta
})\setminus\bigcup R_{n+1}$. Define $\sigma_{0}:I\cup\bigcup\bigcup
\limits_{i\leq n+1}R_{i}\longrightarrow\omega_{l}$ with the following properties:

\begin{enumerate}
\item $\sigma_{0}\upharpoonright I$ is the identity

\item $\sigma_{0}\upharpoonright J=\triangle\upharpoonright J$ if $J\in
\bigcup\limits_{i\leq n+1}R_{i}.$
\end{enumerate}

\qquad\ \ 

We claim that $\sigma_{0}$ is injective. Since $\triangle$ is injective, it
remains to check that if $\alpha\in I$ and $\xi\in J$ (for some $J\in
\bigcup\limits_{i\leq n+1}R_{i}$), then $\sigma_{0}\left(  \alpha\right)
\neq\sigma_{0}\left(  \xi\right)  .$ We proceed by cases. If $J\notin L,$ then
$\sigma_{0}\left(  \alpha\right)  =\alpha<\gamma,$ while $\sigma_{0}\left(
\xi\right)  \geq\gamma.$ While if $J\in L,$ we know that $\sigma_{0}\left(
\alpha\right)  =\alpha$ and $\sigma_{0}\left(  \xi\right)  =\xi,$ so we are
done. Find $\sigma:\omega_{l}\longrightarrow\omega_{l}$ a permutation
extending $\sigma_{0}.$

\begin{claim}
\qquad\qquad\qquad\qquad\qquad\ \ \ \ \ \ \ \qquad\ \ \ \ \ \ \ \ \ \ \ \ \ \ \ 

\begin{enumerate}
\item $\sigma\upharpoonright$ \textsf{sup}$(\dot{A}_{\beta})$ is the identity.

\item $\sigma_{\ast}(\dot{A}_{\beta})=\dot{A}_{\beta}.$

\item $\sigma_{\ast}(\dot{W}_{i})=\triangle_{\ast}(\dot{W}_{i})$ for $i\leq
n.$
\end{enumerate}
\end{claim}

\qquad\ \ 

The second and third points are consequence of the first one and Lemma
\ref{automorfismo coinciden}. We prove the first point. Let $\alpha\in$
\textsf{sup}$(\dot{A}_{\beta}).$ In case $\alpha\in I,$ we know that
$\sigma\left(  \alpha\right)  =\sigma_{0}\left(  \alpha\right)  =\alpha.$ If
$\alpha\notin I,$ then there is $J\in$ $\bigcup R_{n+1}$ with $\alpha\in J.$
Since $J\in L,$ we conclude that $\sigma\left(  \alpha\right)  =\triangle
\left(  \alpha\right)  =\alpha.$ This finishes the proof of the claim.

\qquad\qquad\qquad

We already knew that $\mathbb{P}\left(  \omega_{l}\right)  $ forces that
$\bigcap\limits_{i\leq n}\triangle_{\ast}(\dot{W}_{i})\cap\dot{A}_{\beta}=$
$\bigcap\limits_{i\leq n}\sigma_{\ast}(\dot{W}_{i})\cap\sigma_{\ast}(\dot
{A}_{\beta})$ is infinite. Since $\sigma_{\ast}$ is an isomorphism (see
Proposition \ref{Prop formula isomorfismo}), it follows that $\bigcap
\limits_{i\leq n}\dot{W}_{i}\cap\dot{A}_{\beta}$ is forced to be infinite.
Finally, define $\overline{R}_{n+1}=R_{n+1}\cup\left\{  I\right\}  .$ It
follows that $L,$ $\langle R_{0},...,R_{n},\overline{R}_{n+1}\rangle$ and
$\beta$ are as desired.
\end{proof}

\qquad\ \ \qquad\ \ 

We can now prove our main result:

\begin{theorem}
[\textsf{CH}]Let $l\in\omega.$ $\mathbb{P}\left(  \omega_{l}\right)  $ forces
that there is an ultrafilter that intersects all increasing towers.
\label{Teorema ultrafiltro torre}
\end{theorem}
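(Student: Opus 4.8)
The plan is to reproduce, inside the forcing extension, the elementary-submodel argument from Section~\ref{SeccionCH}, but carried out in the ground model with \emph{names} for increasing towers rather than with the towers themselves. By Kunen's Theorem~\ref{KunenTorres}, $\mathbb{P}\left(\omega_l\right)$ forces every tower---and hence, by complementation, every increasing tower---to have length $\omega_1$, so each increasing tower of the extension is realized by a ground-model name of the form $\mathcal{A}=\{\dot{A}_\alpha\mid\alpha\in\omega_1\}$ forced to be an increasing tower. To each such name I would attach, using the fixed well-order $\trianglelefteq$, the $\trianglelefteq$-least $l$-model $N(\mathcal{A})$ with $\mathcal{A}\in N(\mathcal{A})$ (these exist by the existence lemma for $n$-models), set $\delta_{\mathcal{A}}=\delta_{N(\mathcal{A})}$, and select the name $\dot{W}^{\mathcal{A}}=\dot{A}^{\mathcal{A}}_{\delta_{\mathcal{A}}}$ for the tower element indexed by the model's height. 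The goal is to show that $\mathbb{P}\left(\omega_l\right)$ forces the family $\mathcal{B}=\{\dot{W}^{\mathcal{A}}\}$ to be centered; any ultrafilter extending it in the extension then contains $\dot{W}^{\mathcal{A}}[G]\in\mathcal{A}[G]$ for every increasing tower $\mathcal{A}[G]$, and so meets every increasing tower.

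For $l>1$ the heart of the matter is to reduce centeredness of a finite subfamily to the existence of a nice pattern, by iterating Proposition~\ref{extender nice pattern}. Given names $\mathcal{A}_0,\dots,\mathcal{A}_n$, I would first reorder them so that $\langle N(\mathcal{A}_0),\dots,N(\mathcal{A}_n)\rangle$ is $\delta$-increasing, writing $\delta_i=\delta_{\mathcal{A}_i}$. Starting from the trivial (empty) nice pattern, whose forced intersection is all of $\omega$, I would build a chain of nice patterns by adding one tower at a time: at stage $i$ I apply Proposition~\ref{extender nice pattern} to the current pattern, with the tower name $\mathcal{A}_i$ and the $l$-model $N=N(\mathcal{A}_i)$, obtaining a submodel $L_i\in N(\mathcal{A}_i)$, an ordinal $\beta_i\in L_i\cap\omega_1$, a refined decisive partition, and a new entry $\dot{A}^{\mathcal{A}_i}_{\beta_i}$ for which the enlarged tuple is again a nice pattern. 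The hypothesis to monitor at each stage is the height requirement of Proposition~\ref{extender nice pattern}: the largest height in the stage-$(i-1)$ pattern is $\delta_{L_{i-1}}$, and since $L_{i-1}\in N(\mathcal{A}_{i-1})$ is countable we get $\delta_{L_{i-1}}<\delta_{\mathcal{A}_{i-1}}\le\delta_{\mathcal{A}_i}=\delta_{N(\mathcal{A}_i)}$, so the strict inequality $\delta_{L_{i-1}}<\delta_{N(\mathcal{A}_i)}$ persists even when consecutive towers share a height.

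After $n+1$ stages, condition~(5) of the definition of a nice pattern guarantees that $\mathbb{P}\left(\omega_l\right)$ forces $\bigcap_{i\le n}\dot{A}^{\mathcal{A}_i}_{\beta_i}$ to be infinite. Because $\beta_i\in L_i\cap\omega_1$ with $L_i\in N(\mathcal{A}_i)$, we have $\beta_i<\delta_{L_i}<\delta_{\mathcal{A}_i}$, and since $\mathcal{A}_i$ is forced to be an \emph{increasing} tower, $\dot{A}^{\mathcal{A}_i}_{\beta_i}$ is forced to be almost contained in $\dot{A}^{\mathcal{A}_i}_{\delta_{\mathcal{A}_i}}=\dot{W}^{\mathcal{A}_i}$. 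Hence $\mathbb{P}\left(\omega_l\right)$ forces $\bigcap_{i\le n}\dot{W}^{\mathcal{A}_i}$ to be infinite, which gives centeredness of every finite subfamily and therefore of $\mathcal{B}$. For the remaining cases $l\le 1$, I would argue separately that $\mathbb{P}\left(\omega_l\right)$ is ccc of size $\omega_1$ under \textsf{CH}, so \textsf{CH} is preserved in the extension, and the ultrafilter is then produced directly by the \textsf{CH} theorem of Section~\ref{SeccionCH} (this split is natural because, for $l\le 1$, supports of the $\dot{A}_\alpha$ cannot all be bounded below a single countable ordinal, which is exactly where $l>1$ is used).

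The genuinely hard work---refreshing the names $\dot{W}_i$ by an automorphism so that their supports can be relocated off a prescribed initial segment while preserving the forced infinitude of their intersection, and then splicing in the new tower element---has already been packaged into Proposition~\ref{extender nice pattern}, so the theorem itself is essentially an assembly step. Within that assembly the delicate points are bookkeeping: initializing the induction correctly (treating the empty pattern as nice, or constructing the first stage by hand with $\beta_0<\delta_{\mathcal{A}_0}$), maintaining the $\delta$-increasing height inequalities so the hypotheses of Proposition~\ref{extender nice pattern} are met at each stage, and confirming the strict bound $\beta_i<\delta_{\mathcal{A}_i}$ that lets the selected element $\dot{W}^{\mathcal{A}_i}$ almost contain the pattern entry. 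I expect the subtlest point to be ensuring the height hypothesis survives the case of repeated heights $\delta_{\mathcal{A}_{i-1}}=\delta_{\mathcal{A}_i}$, which is precisely where the strictness $\delta_{L_{i-1}}<\delta_{\mathcal{A}_{i-1}}$ coming from $L_{i-1}\in N(\mathcal{A}_{i-1})$ is indispensable.
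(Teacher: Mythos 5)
Your proposal is correct and follows essentially the same route as the paper's proof: fix $\omega_1$-length names via Theorem \ref{KunenTorres}, attach an $l$-model $N(\mathcal{A})$ to each name and select the entry indexed by $\delta_{N(\mathcal{A})}$, then verify centeredness by inductively building nice patterns one tower at a time through Proposition \ref{extender nice pattern}, using exactly the paper's height bookkeeping ($\delta_{M_n}<\delta(\mathcal{A}_n)\leq\delta(\mathcal{A}_{n+1})$, with strictness coming from $M_n\in N(\mathcal{A}_n)$) and the same \textsf{CH}-preservation disposal of the case $l\leq1$. The only cosmetic deviation is your suggestion of an ``empty nice pattern'' as the induction base; the paper instead starts the induction by hand at $n=0$ (any $M_0\in N(\mathcal{A}_0)$ and any $\beta_0<\delta_{M_0}$), which is the safer of your two stated options since Proposition \ref{extender nice pattern} is only formulated for nonempty patterns.
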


\begin{proof}
If $l\leq1,$ the forcing extension will be a model of \textsf{CH, }so the
result follows by Theorem \ref{KunenvanMill}, so we now assume $l>1.$ Let
$\mathbb{T}$ be the the collection of all $\mathbb{P}\left(  \omega
_{l}\right)  $-names for increasing towers. For $\mathcal{A}\in\mathbb{T},$ we
enumerate it as $\mathcal{A=\{}\dot{A}_{\alpha}(\mathcal{A})\mid\alpha
\in\omega_{1}\mathcal{\}}$ \ (recall Theorem \ref{KunenTorres}) and fix the
following objects:

\begin{enumerate}
\item $N(\mathcal{A})$ an $l$-model such that $\mathcal{A\in}$ $N\left(
\mathcal{A}\right)  .$

\item $\delta\left(  \mathcal{A}\right)  =\delta_{N\left(  \mathcal{A}\right)
}.$

\item $\dot{A}_{\mathcal{A}}=\dot{A}\mathcal{_{\delta\left(  \mathcal{A}%
\right)  }(\mathcal{A})}.$
\end{enumerate}

\qquad\ \qquad\ \ 

Let $\mathcal{B=\{}\dot{A}\mathcal{_{\mathcal{A}}\mid A}\in\mathbb{T}%
\mathcal{\}}.$ We claim that $\mathcal{B}$ is forced to be a centered family.
Once this is proved, any ultrafilter extending $\mathcal{B}$ will be as desired.

\begin{claim}
Let $\mathcal{A}_{0},...,\mathcal{A}_{n}\in\mathbb{T}$ such that
$\delta(\mathcal{A}_{0})\leq...\leq\delta(\mathcal{A}_{n}).$ There
are\newline$\left\langle M_{0},...,M_{n}\right\rangle ,$ $P=\left\langle
P_{i}\mid i\leq n\right\rangle $ and $\beta_{0},...,\beta_{n}$ with the
following properties:

\begin{enumerate}
\item $M_{i}\in N(\mathcal{A}_{i})$ and $\beta_{i}<\delta_{M_{i}}.$

\item $(M_{0},...,M_{n},\dot{A}_{\beta_{0}}(\mathcal{A}_{0}),...,\dot
{A}_{\beta_{n}}(\mathcal{A}_{n}),P)$ is a nice pattern.
\end{enumerate}
\end{claim}

\qquad\ \qquad\ 

Proceed by induction on $n.$ In case $n=0,$ take any $M_{0}\in N(\mathcal{A}%
_{0})$ and any $\beta_{0}<\delta_{M_{0}}.$ Assume the claim is true for $n,$
we will prove it is true for $n+1$ as well. We have $\mathcal{A}%
_{0},...,\mathcal{A}_{n+1}\in\mathbb{T}$ such that $\delta(\mathcal{A}%
_{0})\leq...\leq\delta(\mathcal{A}_{n+1}).$ By the inductive hypothesis, find
$\left\langle M_{0},...,M_{n}\right\rangle ,$ $P=\left\langle P_{i}\mid i\leq
n\right\rangle $ and $\beta_{0},...,\beta_{n}$ as in the claim (for
$\mathcal{A}_{0},...,\mathcal{A}_{n}$). Note that $\delta_{M_{n}}%
<\delta(\mathcal{A}_{n})\leq\delta(\mathcal{A}_{n+1}).$ We can apply
Proposition \ref{extender nice pattern}. This finishes the proof of the claim.

\qquad\qquad\qquad

We can now prove that $\mathcal{B}$ is forced to be centered. Let
$\mathcal{A}_{0},...,\mathcal{A}_{n}\in\mathbb{T}.$ Apply the claim and find
the objects as above. Since the sequence\newline$(M_{0},...,M_{n},\dot
{A}_{\beta_{0}}(\mathcal{A}_{0}),...,\dot{A}_{\beta_{n}}(\mathcal{A}_{n}),P)$
is a nice pattern, so it follows that the set $\bigcap\limits_{i\leq n}\dot
{A}_{\beta_{i}}(\mathcal{A}_{i})$ is forced to be infinite. Finally, since
$\dot{A}_{\beta_{i}}(\mathcal{A}_{i})$ is forced to be an almost subset of
$\dot{A}_{\mathcal{A}_{i}},$ we conclude that $\bigcap\limits_{i\leq n}\dot
{A}_{\mathcal{A}_{i}}$ is forced to be infinite.
\end{proof}

\section{No covering with nowhere dense P-sets \label{Seccion Covering}}

It is well-known that Cohen forcing preserves towers. Moreover, in a Cohen
extension (over a model of \textsf{CH}) for every tall \textsf{P}-ideal
$\mathcal{I}$ in the extension, we can find an intermediate model where the
Continuum Hypothesis holds and $\mathcal{I}$ remains a tall \textsf{P}-ideal.
Consequently, by Proposition \ref{CH pideal contiene torre} it contains an
increasing tower, which remains a tower in the final extension (see
\cite{PFiltersCohenRandomLaver} and \cite{SetTheoryinTopology} for more
details). This leads to the following result:

\begin{proposition}
[\textsf{CH}]In any Cohen extension, every tall \textsf{P}-ideal contains an
increasing tower. \label{P ideales modelo de Cohen}
\end{proposition}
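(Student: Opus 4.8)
The plan is to reduce this statement to the combination of
Proposition~\ref{CH pideal contiene torre} (which produces increasing towers
inside tall \textsf{P}-ideals under \textsf{CH}) and the tower-preservation
property of Cohen forcing, by finding a suitable intermediate model of
\textsf{CH} in which the ideal already lives. Concretely, suppose we start
with a ground model $V$ of \textsf{CH} and force with $\mathbb{C}(I)$ for some
index set $I$, obtaining $V[G]$. Let $\mathcal{I}$ be a tall \textsf{P}-ideal
in $V[G]$. First I would fix a name $\dot{\mathcal{I}}$ for $\mathcal{I}$ and
invoke the support property of Cohen forcing
(Lemma, part~3: every name for a subset of $\omega$ has a countable support)
to absorb a generating family into a subforcing indexed by a set of size at
most $\mathfrak{c}^{V[G]}$.

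Next I would produce the intermediate model. Using the regularity of the
suborders $\mathbb{C}(K)\leq\mathbb{C}(I)$ for $K\subseteq I$ (the
Lemma, part~2), choose $K\in[I]^{\leq\aleph_1}$
(or more carefully of size $\aleph_1$) such that a cofinal/generating subfamily
of $\mathcal{I}$, together with a countable support for each of its members,
is captured by the intermediate extension $V[G\!\restriction\! K]$. The key
point here is that $V[G\!\restriction\! K]$ is itself a Cohen extension of $V$
by $\aleph_1$-many Cohen reals, hence a model of \textsf{CH}, and that
$\mathcal{I}\cap V[G\!\restriction\! K]$ still generates a tall \textsf{P}-ideal
there. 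Tallness and the \textsf{P}-property are both $\Pi^1_1$-type reflection
statements about countable subfamilies and their pseudounions, so they descend
to the intermediate model provided we chose $K$ to reflect enough witnesses;
this reflection is exactly what the absorption step must guarantee.

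In $V[G\!\restriction\! K]$, \textsf{CH} holds, so by
Proposition~\ref{CH pideal contiene torre} the intermediate-model ideal
$\mathcal{I}\cap V[G\!\restriction\! K]$ contains an increasing tower
$\mathcal{B}=\{B_\alpha\mid\alpha<\omega_1\}$. Finally I would pass back up to
$V[G]$: since $\mathbb{C}(I)$ is Cohen forcing over $V[G\!\restriction\! K]$ as
well (the remaining coordinates $I\setminus K$ add Cohen reals over the
intermediate model), and Cohen forcing preserves towers, $\mathcal{B}$ remains
an increasing tower in $V[G]$. As $\mathcal{B}\subseteq\mathcal{I}$, the ideal
$\mathcal{I}$ contains an increasing tower, as desired.

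The main obstacle I anticipate is the reflection/absorption step: one must
verify not merely that a generating family of $\mathcal{I}$ lands in the
intermediate model, but that \emph{tallness} is preserved downward, i.e. that
every infinite $A\in[\omega]^\omega\cap V[G\!\restriction\! K]$ already meets
some member of the captured subfamily infinitely often, and that the
\textsf{P}-property survives for countable subfamilies lying in
$V[G\!\restriction\! K]$. This requires a closing-off argument choosing $K$ so
that for each such $A$ a witness is reflected, using a Löwenheim--Skolem-style
construction of length $\omega_1$ over the countable supports; because all
objects involved have countable support and $\mathbb{C}(I)$ is ccc, such a $K$
of size $\aleph_1$ can be assembled, but the bookkeeping is the delicate part.
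The preservation of towers by the top Cohen forcing, by contrast, is the
quoted classical fact and requires no new work.
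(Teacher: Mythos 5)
Your proposal is correct and follows essentially the same route as the paper: pass to an intermediate extension $V[G\restriction K]$ with $|K|=\aleph_1$ chosen by a closing-off argument (using countable supports and ccc-ness) so that \textsf{CH} holds there and the trace of the ideal remains a tall \textsf{P}-ideal, apply Proposition~\ref{CH pideal contiene torre} to extract an increasing tower, and conclude via tower preservation by the remaining Cohen coordinates. The paper gives only this sketch, citing \cite{PFiltersCohenRandomLaver} and \cite{SetTheoryinTopology} for the details you spell out, so your elaboration of the reflection/absorption step is exactly the intended argument.
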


\qquad\ \qquad\ \ \ \ \ \ 

When combined with Theorem \ref{Teorema ultrafiltro torre}, we get the following:

\begin{theorem}
[\textsf{CH}]Let $l\in\omega.$ $\mathbb{C}\left(  \omega_{l}\right)  $ forces
that $\omega^{\ast}$ can not be covered by nowhere dense \textsf{P}-sets.
\label{Teorema para Cohen}
\end{theorem}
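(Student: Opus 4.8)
The final statement to prove is Theorem \ref{Teorema para Cohen}: under \textsf{CH}, $\mathbb{C}\left(\omega_l\right)$ forces that $\omega^{\ast}$ cannot be covered by nowhere dense \textsf{P}-sets. Looking at the way the paper has built up its machinery, this is meant to be an immediate corollary of the two preceding results, so my plan is to combine them rather than prove anything from scratch.

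The plan is to reduce the covering statement to the purely combinatorial assertion about increasing towers. By the equivalences recorded in the introduction (via Stone duality) together with the observation that an ideal is a tall \textsf{P}-ideal exactly when its dual filter is a nowhere dense \textsf{P}-filter, it suffices to produce, in the extension, an ultrafilter that intersects every tall \textsf{P}-ideal. First I would pass to the Cohen extension $V[G]$ obtained by forcing with $\mathbb{C}\left(\omega_l\right)$ over a model of \textsf{CH}. Applying Theorem \ref{Teorema ultrafiltro torre} (with $\mathbb{P}=\mathbb{C}$), there is in $V[G]$ an ultrafilter $\mathcal{U}$ that intersects every increasing tower.

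The key step is then to upgrade ``intersects every increasing tower'' to ``intersects every tall \textsf{P}-ideal.'' This is exactly where Proposition \ref{P ideales modelo de Cohen} enters: in any Cohen extension over a model of \textsf{CH}, every tall \textsf{P}-ideal $\mathcal{I}$ already contains an increasing tower $\mathcal{T}\subseteq\mathcal{I}$. So, given an arbitrary tall \textsf{P}-ideal $\mathcal{I}$ in $V[G]$, I would apply Proposition \ref{P ideales modelo de Cohen} to extract an increasing tower $\mathcal{T}\subseteq\mathcal{I}$, then use the defining property of $\mathcal{U}$ to find some $B\in\mathcal{U}\cap\mathcal{T}$; since $\mathcal{T}\subseteq\mathcal{I}$, this $B$ witnesses $\mathcal{U}\cap\mathcal{I}\neq\emptyset$. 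Thus $\mathcal{U}$ intersects every tall \textsf{P}-ideal, which by the dual formulation means $\mathcal{U}$ contains no nowhere dense \textsf{P}-filter, and hence (by the Stone-duality equivalence (1)$\Leftrightarrow$(2)) $\omega^{\ast}$ cannot be covered by nowhere dense \textsf{P}-sets.

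I do not expect a genuine obstacle here, since all the real work has been front-loaded into Theorem \ref{Teorema ultrafiltro torre} and Proposition \ref{P ideales modelo de Cohen}. The only point requiring care is bookkeeping with the dualities: one must be careful that ``an ultrafilter intersecting every increasing tower'' is the correct dual form to feed into the tower-containment result, and that the tower produced by Proposition \ref{P ideales modelo de Cohen} is an \emph{increasing} tower (matching the hypothesis of Theorem \ref{Teorema ultrafiltro torre}) rather than a decreasing one. Checking the definitions confirms that the ideal generated by an increasing tower is the relevant tall \textsf{P}-ideal, so the matching is consistent. Hence the proof is essentially the concatenation of the two quoted results, and the theorem follows.
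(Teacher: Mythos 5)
Your proposal is correct and is exactly the paper's argument: the paper derives Theorem \ref{Teorema para Cohen} by combining Theorem \ref{Teorema ultrafiltro torre} with Proposition \ref{P ideales modelo de Cohen} and the Stone-duality equivalences from the introduction, which is precisely the concatenation you describe. Your explicit bookkeeping (extracting an increasing tower $\mathcal{T}\subseteq\mathcal{I}$ from each tall \textsf{P}-ideal and intersecting it with the ultrafilter) correctly fills in the details the paper leaves implicit.
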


\qquad\qquad\qquad\qquad\ \ \ 

What about random forcing? While it is true that all towers in the random
models have length $\omega_{1}$, Proposition \ref{P ideales modelo de Cohen}
fails for random forcing. The issue is that random forcing may fail to
preserve towers. Specifically, random forcing diagonalizes the \emph{summable
ideal} $\mathcal{J}_{\frac{1}{n}}=\{A\subseteq\omega\mid\sum\limits_{n\in
A}\frac{1}{n+1}<\infty\}$ (see \cite{CardinalInvariantsofAnalyticPIdeals}).
Since $\mathcal{J}_{\frac{1}{n}}$ is a \textsf{P}-ideal\textsf{, }it contains
an increasing tower (assuming \textsf{CH}), which will be destroyed after
adding a single random real. Consequently, if we add at least $\omega_{2}$
random reals to a model of \textsf{CH, }the summable ideal will be a
\textsf{P}-ideal that does not contain an increasing tower.

\section{Open Questions}

We do not know if Theorem \ref{Teorema para Cohen} holds for random forcing:

\begin{problem}
Assume \textsf{CH }and let $l\in\omega.$ Does $\mathbb{B}\left(  \omega
_{l}\right)  $ force that $\omega^{\ast}$ can not be covered by nowhere dense
\textsf{P}-sets?
\end{problem}

\qquad\ \qquad\ \ 

We do not know if our results can be extended when we add more than
$\aleph_{\omega}$ Cohen reals.

\begin{problem}
Assume \textsf{CH }and let $\kappa$ be any uncountable cardinal. Does
$\mathbb{C}\left(  \kappa\right)  $ force that $\omega^{\ast}$ can not be
covered by nowhere dense \textsf{P}-sets?
\end{problem}

\qquad\ \qquad\ \ 

Finally, we would like to reiterate the following problems from
\cite{CozeroAccesiblePoints}, since we consider them very important:

\begin{problem}
Does \textsf{PFA }imply that $\omega^{\ast}$ can not be covered by nowhere
dense \textsf{P}-sets?
\end{problem}

\begin{problem}
Is it consistent with \textsf{MA }and the negation of \textsf{CH} that
$\omega^{\ast}$ can be covered by nowhere dense \textsf{P}-sets?
\end{problem}

\qquad\ \ \qquad\ \ \ \qquad\ \qquad\ \ \ \ \ \qquad\ \ \ \ 

\begin{acknowledgement}
We would like to thank Michael Hru\v{s}\'{a}k for several helpful discussions
related to the topic of this paper. We would also like to thank the referee
for their comments, which improved the paper.
\end{acknowledgement}

\bibliographystyle{plain}
\def\cprime{$'$}

\qquad\qquad\qquad\ \ \ \ \ \ \ \ \ \ \ \ \ \ \ \ \ \ \ \qquad\qquad\qquad\ \ \ 

Alan Dow

Department of Mathematics and Statistics, UNC Charlotte.

adow@charlotte.edu

\qquad\qquad\qquad\ \ \ \ \ \ \ \ \ \ \ \ \qquad\ \ \ \ \ \qquad
\qquad\ \ \ \qquad\qquad\qquad\qquad\ \ \ \ \ \ 

Osvaldo Guzm\'{a}n

Centro de Ciencias Matem\'{a}ticas, UNAM.

oguzman@matmor.unam.mx

\end{document}